\documentclass[11pt]{article}

\usepackage[a4paper,margin=2.45cm]{geometry}
\usepackage{amsmath,amssymb,amsthm,amsfonts,amstext,bm,mathtools,amsopn}
\usepackage{booktabs,enumitem,microtype,xcolor}
\usepackage{multicol}
\usepackage{algorithm}
\usepackage{algorithmic}
\usepackage{boxedminipage}
\usepackage{fancybox}
\usepackage{cases,supertabular}
\usepackage{multirow,makecell}
\usepackage{graphicx}
\usepackage{array}
\usepackage{listings}
\usepackage{subfigure}
\usepackage{enumerate}
\usepackage[sort&compress]{natbib}
\usepackage[colorlinks=true,allcolors=blue!55!black]{hyperref}
\usepackage{etoolbox}

\bibpunct[, ]{[}{]}{,}{n}{}{,}

\DeclareGraphicsExtensions{.eps,.pdf,.png,.jpg}
\DeclareUnicodeCharacter{2000}{ }
\DeclareUnicodeCharacter{2011}{\mbox{-}}
\DeclareUnicodeCharacter{2013}{--}

\theoremstyle{plain}
\newtheorem{theorem}{Theorem}
\newtheorem{lemma}{Lemma}
\newtheorem{assumption}{Assumption}
\theoremstyle{remark}
\newtheorem{remark}{Remark}

\renewenvironment{proof}[1]{%
	\par\noindent\textit{#1.}\enspace\ignorespaces
}{%
	\par\medskip
}
\newcommand{\Halmos}{\hfill\ensuremath{\square}}

\newcommand{\compactalignspacing}{%
	\setlength{\abovedisplayskip}{0.15em plus 0.05em minus 0.03em}%
	\setlength{\belowdisplayskip}{0.15em plus 0.05em minus 0.03em}%
	\setlength{\abovedisplayshortskip}{0pt plus 0.03em}%
	\setlength{\belowdisplayshortskip}{0.12em plus 0.03em minus 0.03em}}
\AtBeginEnvironment{align}{\compactalignspacing}
\AtBeginEnvironment{align*}{\compactalignspacing}

\title{Restart-Free Oracle-Efficient Methods for Strongly Convex Composite Optimization}
\author{%
	Xinming Wu\\[-0.15em]
	{\small Shanghai Key Laboratory for Contemporary Applied Mathematics, School of Mathematical Sciences,}\\[-0.15em]
	{\small Fudan University, Shanghai 200433, China, \texttt{wuxinming@fudan.edu.cn}}\\[0.8em]
	Zi Xu\thanks{Corresponding author.}\\[-0.15em]
	{\small Department of Mathematics, College of Sciences, Shanghai University, Shanghai, 200444, China,}\\[-0.15em]
	{\small \texttt{xuzi@shu.edu.cn}.}\\[0.8em]
	Huiling Zhang\\[-0.15em]
	{\small LSEC, ICMSEC, Academy of Mathematics and Systems Science,}\\[-0.15em]
	{\small Chinese Academy of Sciences, Beijing 100190, China, \texttt{zhanghl@amss.ac.cn}}
}
\date{}

\begin{document}

	\maketitle

	\begin{abstract}
		We consider strongly convex composite optimization problems whose objective is the sum of a general smooth component, a general nonsmooth component, and a relatively simple nonsmooth term. Although existing first-order methods can achieve optimal convergence guarantees under strong convexity, they typically rely on explicit restart schemes, leading to multiphase algorithms with increased structural and implementation complexity. To address this issue, we develop a restart-free stochastic gradient sliding method that incorporates strong convexity directly through a novel parameter selection strategy. We establish that the proposed method computes an $\epsilon$-solution with $\mathcal{O}\bigl(\log(1/\epsilon)\bigr)$ gradient evaluations of the smooth component and $\mathcal{O}(1/\epsilon)$ stochastic subgradient evaluations of the general nonsmooth component. These bounds match the corresponding complexity guarantees of existing multiphase, restart-based methods, while eliminating the need for explicit restarts. Furthermore, when subgradients of the general nonsmooth component are difficult to compute, but a compatible smooth approximation and its first-order oracle are available, we develop a restart-free accelerated stochastic smoothed gradient sliding (RF-ASSGS) method. RF-ASSGS requires $\mathcal{O}\bigl(\log(1/\epsilon)\bigr)$ gradient evaluations of the smooth component and $\mathcal{O}\bigl(1/\sqrt{\epsilon}+\sigma^2/\epsilon\bigr)$ stochastic-oracle evaluations. Overall, our results demonstrate that restart-free gradient sliding methods can preserve the optimal convergence guarantees of their restart-based counterparts for strongly convex objectives, while substantially simplifying the algorithmic structure and implementation.
	\end{abstract}

	\noindent\textbf{Funding.} This work is supported by National Key  R \& D Program of China (Nos. 2025YFA1017801 and 2025YFA1017800), National Natural Science Foundation of China under the grant 12471294 and the Postdoctoral Fellowship Program of CPSF under Grant Number GZB20240802 and 2024M763470.

	\medskip
	\noindent\textbf{Keywords.} gradient sliding algorithm, restart-free, accelerated stochastic smoothed gradient sliding algorithm, smoothing, iteration complexity.
	\medskip

	\section{Introduction}
	In this paper, we consider the following class of composite convex optimization problems:
	\begin{align}\label{prob}
		\min_{x \in X} \; \Psi(x) := f(x) + h(x) + \chi(x),
	\end{align}
	where $X \subseteq \mathbb{R}^n$ is a closed convex set, $f$ is a smooth convex function, $h$ is a nonsmooth convex function, and $\chi$ is a relatively simple convex function whose proximal mapping can be computed efficiently. Problems of the form \eqref{prob} arise in a wide range of data analysis applications, including total variation regularization \cite{Rudin, Lan2022}, sparse logistic regression \cite{Berkson}, low-rank tensor recovery \cite{Kolda, Tomioka}, graph regularization \cite{Jacob, Tibshirani2005}, and the Lasso problem \cite{Jacob, Mairal, Tibshirani}.
	
	In addition to stochastic subgradients of $h$, we also study the setting in which subgradients of $h$ are difficult to compute, but $h$ admits a computable family of smooth convex approximations. For example, for a max-form function, Nesterov's smoothing technique adds a strongly convex regularizer to the inner maximization problem, thereby ensuring the uniqueness of its maximizer and differentiability of the resulting approximation. When the regularized inner problem can be solved efficiently, the gradient of the smoothed function can be obtained directly from this unique maximizer \cite{Nesterov05}. The smoothing parameter balances the approximation error against the Lipschitz constant of the smoothed gradient. Max-form models arise in a variety of applications, including reinforcement learning \cite{Du}, empirical risk minimization \cite{Kovalev}, network flow optimization \cite{Zargham}, decentralized distributed optimization \cite{Arjevani, Kovalev2020, Ye}, and optimal transport \cite{Peyre}.
	
	When the objective function $\Psi(x)$ in \eqref{prob} is smooth, that is, when $h \equiv 0$ and $\chi \equiv 0$, problem \eqref{prob} reduces to minimizing a smooth convex function over the convex set $X$. For this fundamental problem class, Nesterov's Accelerated Gradient (AG) method \cite{Nesterov1983, Nesterov2018} achieves the optimal convergence rates of $\mathcal{O}(1/\sqrt{\epsilon})$ iterations for convex problems and $\mathcal{O}(\log(1/\epsilon))$ iterations for strongly convex problems. These rates are known to be optimal \cite{Nesterov2018}, and related extensions and complexity analyses have been developed in \cite{Ghadimi2016, Lan2012, Tseng}.
	
	For the nonsmooth convex problem \eqref{prob}, a standard setting assumes that the proximal mapping of $h$ can be computed efficiently. In this case, the proximal gradient method has an iteration complexity of $\mathcal{O}(1/\epsilon)$ \cite{Nesterov2018}, which can be improved to $\mathcal{O}(1/\sqrt{\epsilon})$ through multistep acceleration schemes \cite{Tseng}. However, these methods become impractical when $h$ is a general nonsmooth function whose proximal mapping is difficult or expensive to evaluate.
	
	An alternative is to access $f$ and $h$ separately through their first-order oracles: a gradient for $f$ and a subgradient, denoted by $h'$, for $h$. Under this oracle model, Nemirovskij et al. \cite{Nemirovskij2009} showed that a properly modified stochastic approximation method requires $O((L^2 + M^2 + \sigma^2)/\epsilon^2)$ iterations to compute an $\epsilon$-solution $\bar{x} \in X$ satisfying $\Psi(\bar{x})-\Psi^* \le \epsilon$. Here, $\sigma$ denotes the stochastic noise, $L$ is the Lipschitz constant of $\nabla f$, and $M$ is a constant satisfying
	\[
	h(x) \le h(y) + \langle h'(y), x - y \rangle + M\|x - y\|, \quad \forall x, y \in X.
	\]
	Subsequently, Juditsky et al. \cite{Juditsky} proposed a stochastic mirror-prox method that bounds the number of oracle calls to $\nabla f$ and $h'$ by $\mathcal{O}\big(\frac{L}{\epsilon} + \frac{M^2+\sigma^2}{\epsilon^2}\big)$. Lan \cite{Lan2012} further advanced this line of research by developing an accelerated stochastic approximation method, which improves the bound to $\mathcal{O}\big(\sqrt{\frac{L}{\epsilon}} + \frac{M^2+\sigma^2}{\epsilon^2}\big)$. As observed in \cite{Lan2012}, this complexity is unimprovable when only first-order information about the sum $f+h$ is available through a single black-box oracle.
	
	For the max-form special case of problem \eqref{prob}, Nesterov \cite{Nesterov05} proposed smoothing the nonsmooth component before applying an optimal gradient method. He showed that an $\epsilon$-solution of the associated bilinear saddle-point problem can be obtained in at most $O\!\left(\sqrt{\frac{L}{\epsilon}} + \frac{\|K\|}{\epsilon}\right)$ iterations. This complexity was subsequently shown to be theoretically unimprovable \cite{Ouyang}. Motivated by this result, substantial effort has been devoted to developing first-order methods that exploit the saddle-point structure of this special problem class. Representative examples include mirror-prox methods \cite{Chen, Juditsky, Nemirovskij2004}, primal-dual methods \cite{Chen2014, Esser, He}, and their equivalent formulations as the alternating direction method of multipliers \cite{HeB, Monteiro, Ouyang2015}.
	
	In many practical applications, evaluating the gradient $\nabla f(x)$ is substantially more expensive than computing a subgradient $h'(x) \in \partial h(x)$ or applying the linear operators $K$ and $K^\top$. For example, in total variation regularization and overlapped group lasso, the operator $K$ is typically sparse, whereas $f$ often contains a computationally intensive data-fitting term \cite{Lan_sliding, Lan2022}. This computational asymmetry motivates the development of algorithms that reduce the number of expensive evaluations of $\nabla f$ while preserving the complexity of the less expensive subgradient evaluations of $h'$. To this end, Lan \cite{Lan_sliding} proposed the stochastic gradient sliding (SGS) algorithm and showed that the number of gradient evaluations of $\nabla f$ needed to compute an $\epsilon$-solution of \eqref{prob} can be reduced to $\mathcal{O}\!\left(\sqrt{\frac{L}{\epsilon}}\right)$. Meanwhile, the number of stochastic subgradient evaluations of $h'$ remains $\mathcal{O}\!\left(\sqrt{\frac{L}{\epsilon}} + \frac{M^2+\sigma^2}{\epsilon^2}\right)$. When the smooth component $f$ is $\mu$-strongly convex, Lan \cite{Lan_sliding} further introduced a multiphase stochastic gradient sliding (M-SGS) algorithm based on a restarting strategy. This method achieves the optimal complexity bounds $\mathcal{O}\!\left(\sqrt{\frac{L}{\mu}}\log\bigl(\frac{1}{\epsilon}\bigr)\right)$ for gradient evaluations of $\nabla f$ and $\mathcal{O}\!\left(\frac{M^2+\sigma^2}{\mu\epsilon}\right)$ for subgradient evaluations of $h'$.
	
	For the max-form saddle-point problem, Lan et al. \cite{Lan2022} proposed an accelerated gradient sliding (AGS) method for the case $\chi(x)\equiv0$. They proved that an $\epsilon$-solution can be obtained using at most $O\!\left(\sqrt{\frac{L}{\epsilon}}\right)$ gradient evaluations of $\nabla f$ and $O\!\left(\frac{\|K\|}{\epsilon}\right)$ evaluations of $K$ and $K^\top$. Moreover, when $f$ is $\mu$-strongly convex, they developed a multistage AGS (M-AGS) algorithm and showed that these complexities can be improved to $O\!\left(\sqrt{\frac{L}{\mu}}\log\bigl(\frac{1}{\epsilon}\bigr)\right)$ and $O\!\left(\frac{\|K\|}{\sqrt{\epsilon}}\right)$, respectively.
	
	Although these multiphase and multistage algorithms attain optimal complexity bounds, their reliance on a \emph{restarting strategy} introduces both practical and structural complications. In particular, the restart mechanism requires careful maintenance of algorithmic states, repeated parameter resetting, and the solution of subproblems to prescribed accuracies at each phase. These requirements complicate implementation and make it difficult to incorporate such algorithms into larger optimization frameworks. This motivates the development of simpler \emph{restart-free} methods that attain the same complexity guarantees. Accordingly, we ask the following question:
	\emph{Can one develop restart-free, oracle-efficient algorithms for problem \eqref{prob}, based either on stochastic subgradients of $h$ or on stochastic gradients of a compatible smoothing family, while retaining the strong-convexity complexity bounds established in \cite{Lan_sliding} and \cite{Lan2022}?}
	
	\subsection{Contributions}
	In this paper, we answer the above question affirmatively. Our main contributions are the design and analysis of novel stochastic gradient sliding algorithms that require no explicit restarting mechanism while preserving the oracle complexities of their restart-based counterparts for strongly convex problems. The central innovation is a refined continuous parameter-update strategy that reproduces the benefits of restart phases within a single-loop algorithm.
	
	\begin{itemize}
		\item We develop a \textbf{R}estart-\textbf{F}ree \textbf{S}tochastic \textbf{G}radient \textbf{S}liding (\textbf{RF-SGS}) algorithm for solving problem \eqref{prob} under the assumption that $\chi(x)$ is $\mu$-strongly convex. By using a parameter schedule that evolves continuously throughout the iterations, RF-SGS eliminates the multiphase restarting scheme employed in \cite{Lan_sliding}. The method requires $\mathcal{O}(\sqrt{L/(\mu)}\log(1/\epsilon))$ evaluations of $\nabla f$ and $\mathcal{O}(L/(\mu\epsilon)+\sqrt{L/(\mu)}\log(1/\epsilon))$ stochastic subgradient evaluations of $h$. Thus, the restart mechanism, although useful in the design and analysis of existing methods, is not necessary for attaining the same complexity orders.
		
		\item When direct subgradients of $h$ are difficult to compute but $h$ admits a compatible smoothing family $\{h_\eta\}_{\eta>0}$, we develop a \textbf{R}estart-\textbf{F}ree \textbf{A}ccelerated \textbf{S}tochastic \textbf{S}moothed-\textbf{G}radient \textbf{S}liding (\textbf{RF-ASSGS}) algorithm. Through a continuous parameter-update policy, RF-ASSGS avoids the multistage restarts used by M-AGS in \cite{Lan2022}. We establish that RF-ASSGS achieves $\mathcal{O}(\sqrt{L/(\mu)}\log(1/\epsilon))$ gradient evaluations of $\nabla f$ and $\mathcal{O}(\sqrt{L/(\mu)}\log(1/\epsilon)+\|K\|/\sqrt{\mu\epsilon}+\sigma^2/(\mu\epsilon))$ stochastic-oracle evaluations for $h$ in expectation. If exact first-order information is available, i.e., $\sigma=0$, RF-ASSGS requires $O(\sqrt{L/(\mu)}\log(1/\epsilon)+\|K\|/\sqrt{\mu\epsilon})$ evaluations of $K$ and $K^\top$, matching the corresponding oracle-complexity orders established in \cite{Lan2022}. At first glance, our problem formulation differs from that of M-AGS \cite{Lan2022}: we assume that $f$ is smooth and convex and place strong convexity in the proximal term $\chi$, whereas M-AGS assumes $\chi \equiv 0$ and a strongly convex $f$. This distinction is only representational, since a problem with strongly convex $f$ can be reformulated within our framework without changing the original objective, as detailed in Remark~\ref{remark:2}. Consequently, our method applies whenever strong convexity is carried by either $f$ or $\chi$, and in particular covers the problem class considered by M-AGS.
	\end{itemize}
	
	\subsection{Organization}
	The remainder of this paper is organized as follows. In Section~\ref{sec2}, we present the restart-free stochastic gradient sliding (RF-SGS) algorithm for solving problem~\eqref{prob} and establish its convergence guarantees. Section~\ref{sec:3} introduces the restart-free accelerated stochastic smoothed-gradient sliding (RF-ASSGS) algorithm for nonsmooth functions admitting a compatible smoothing family and analyzes its convergence properties. Numerical experiments demonstrating the performance of the proposed methods are reported in Section~\ref{sec4}. Finally, the paper concludes with a summary and a discussion of directions for future research.
	
	\subsection{Notations}
	We denote by $\|\cdot\|$ an arbitrary norm in $\mathbb{R}^n$ and by $\|\cdot\|_*$ its dual norm. For a convex function $h$, $\partial h(x)$ denotes the subdifferential of $h$ at $x$.
	
	A function $\omega: X \to \mathbb{R}$ is called a \emph{distance generating function} with modulus $\nu>0$ relative to $\|\cdot\|$ if it is continuously differentiable and strongly convex with parameter $\nu$, that is,
	\[
	\langle x - z, \nabla\omega(x) - \nabla\omega(z) \rangle \geq \nu \|x - z\|^2, \quad \forall\, x,z \in X.
	\]
	The associated \emph{Bregman distance} is defined by
	\[
	V(x,z) = \omega(z) - \omega(x) - \langle \nabla\omega(x), z - x \rangle.
	\]
	The strong convexity of $\omega$ implies that $V(x,z)\ge \frac{\nu}{2}\|x-z\|^2$ for all $x,z\in X$. We additionally assume that the chosen Bregman distance grows quadratically:
	$$V(x,z) \leq \frac{1}{2}\|x-z\|^2,~\forall x,z\in X.$$
	
	The indicator function of a set $X$ is denoted by $\delta_X(\cdot)$ and is defined as
	\[
	\delta_X(x) =
	\begin{cases}
		0, & \text{if } x \in X, \\[4pt]
		+\infty, & \text{if } x \notin X.
	\end{cases}
	\]

	\section{Stochastic gradient sliding algorithm without restart}\label{sec2}
	In this section, we propose a restart-free stochastic gradient sliding (RF-SGS) algorithm for solving problem \eqref{prob}. This algorithm extends the SGS framework \cite{Lan_sliding} by eliminating the need for a restarting mechanism. Following the setting in \cite{Lan_sliding}, we focus on the scenario where computing stochastic subgradients of $h$ is significantly cheaper than obtaining exact (deterministic) subgradients. To this end, we assume that first-order information for $h$ is accessed via a stochastic oracle ($\mathcal{SO}$). At each iteration $t$, given an input point $x_{k,t} \in X$, the $\mathcal{SO}$ returns a vector $H(x_{k,t}, \xi_{k,t})$, where $\{\xi_{k,t}\}_{t \geq 1}$ is a sequence of independent and identically distributed (i.i.d.) random variables.
	
	Each iteration of the proposed RF-SGS algorithm consists of two nested loops: an outer loop and an inner loop. In the outer loop, we employ an accelerated proximal gradient step of the form:
	\begin{align}
		\underline{x}_k &= (1-\gamma_k)\bar{x}_{k-1} + \gamma_k x_{k-1}, \label{up-uxk} \\
		\tilde{x}_k &\approx \arg\min_{u \in X} \Big\{
		l_f(\underline{x}_k, u)
		+ h(u)
		+ \chi(u)
		+ \beta_k V(x_{k-1}, u)
		\Big\}, \label{up-xk} \\
		\bar{x}_k &= (1-\gamma_k)\bar{x}_{k-1} + \gamma_k \tilde{x}_k, \label{up-bxk}
	\end{align}
	where $l_f(\underline{x}_k, u) = f(\underline{x}_k) + \langle \nabla f(\underline{x}_k), u - \underline{x}_k \rangle$ is the linearization of $f$ at $\underline{x}_k$, and $V(\cdot,\cdot)$ is a Bregman distance.
	
	In the inner loop, we reuse the same gradient $\nabla f(\underline{x}_k)$ throughout $T_k$ inner updates. The subproblem \eqref{up-xk} is solved approximately via the following stochastic subgradient steps:
	\begin{align*}
		u_{k,t} &= \arg\min_{u \in X} \Big\{
		l_f(\underline{x}_k, u)
		+ \langle H(u_{k,t-1}, \xi_{k,t}), u - u_{k,t-1} \rangle
		+ \chi(u) \\
		&\qquad\qquad\qquad + \beta_k V(x_{k-1}, u)
		+ \beta_k p_{k,t} V(u_{k,t-1}, u)
		\Big\}, \\
		\tilde{u}_{k,t} &= (1-\theta_{k,t}) \tilde{u}_{k,t-1} + \theta_{k,t} u_{k,t},
	\end{align*}
	where $H(u_{k,t-1},\xi_{k,t})$ is a stochastic subgradient of $h$ at the predictable point $u_{k,t-1}$. The complete RF-SGS algorithm is formally described in Algorithm \ref{alg}.
	\begin{algorithm}[t]
		\caption{Restart-free stochastic gradient sliding (RF-SGS) algorithm}
		\label{alg}
		\begin{algorithmic}[1]
			\REQUIRE $x_0\in X$ and an outer-iteration horizon $N\ge1$.  Set $c=\frac{\sqrt{\frac{L}{\mu\nu}}}{1+\sqrt{\frac{L}{\mu\nu}}}$, $\bar{x}_0=x_0$.
			\FOR{$k=1,2,\cdots, N$}
			\STATE Set\par\noindent
			\begin{minipage}{\linewidth}
				\begin{align*}
					\beta_k&=\beta =\frac{L}{\nu} \cdot \frac{1}{1+\sqrt{\frac{L}{\mu\nu}}},\quad \gamma_k=\frac{1}{1+\sqrt{\frac{L}{\mu\nu}}},\quad
					T_k=\left\lceil  \frac{1}{c^{k/2}}\cdot  \frac{(\beta+\mu)(1-c)}{ c(\beta+\mu) -\beta}\right\rceil.
				\end{align*}
			\end{minipage}
			\STATE Compute $\underline{x}_k = (1-\gamma_k) \bar{x}_{k-1} + \gamma_k x_{k-1}$.\\
			\STATE Set $u_{k,0}=\tilde{u}_{k,0} =x_{k-1}$.
			\FOR{$t=1,2,\cdots, T_k$}
			\STATE Set $p_{k,t} = \frac{\beta+\mu}{\beta} \cdot  \frac{1}{c^{k/2}}$, $\theta_{k,t}  = \frac{1- \frac{1}{1+c^{k/2}} }{  1- \frac{1}{(1+c^{k/2})^t} }$.
			\STATE Compute\par\noindent
			\begin{minipage}{\linewidth}
				\begin{align}
					u_{k,t} =&\arg\min_{u\in X} \left\{ l_f (\underline{x}_k, u) +\langle H(u_{k,t-1}, \xi_{k,t}), u-u_{k,t-1}\rangle+\chi (u) \right.\nonumber\\
					&\qquad\qquad\left. + \beta_k V(x_{k-1}, u) +   {\color{black}\beta_k p_{k,t} V(u_{k,t-1}, u)} \right\},\label{update-u}\\
					\tilde{u}_{k,t} =& (1-\theta_{k,t}) \tilde{u}_{k,t-1} + \theta_{k,t}  u_{k,t}.\label{update-tu}
				\end{align}
			\end{minipage}
			\ENDFOR
			\STATE 
			Set $x_k= u_{k,T_k}$, $\tilde{x}_k = \tilde{u}_{k,T_k}$.\\
			\STATE Compute $\bar{x}_k  =  (1-\gamma_k) \bar{x}_{k-1} + \gamma_k\tilde{x}_{k}$.
			\ENDFOR
		\end{algorithmic}
	\end{algorithm}
	
	Note that the RF-SGS algorithm computes the gradient $\nabla f(\underline{x}_k)$ only once per outer loop and reuses it throughout the $T_k$ inner updates. This strategy significantly reduces the total number of gradient evaluations for $f$, which is particularly beneficial when evaluating $\nabla f$ is substantially more expensive than computing stochastic subgradients of $h$. The key distinction between the proposed RF-SGS algorithm and the multi-phase SGS (M-SGS) method in \cite{Lan_sliding} lies in the restart mechanism: M-SGS relies on periodic restarts every fixed number of iterations, while RF-SGS uses a carefully designed, continuous parameter schedule without requiring any restart. Although the analysis below is stated with strong convexity in $\chi$, the equivalent reformulation that transfers strong convexity from $f$ to the proximable component in Reamrk \ref{remark:2} shows that RF-SGS also covers the setting addressed by M-SGS in which $f$ is strongly convex.
	
	\subsection{Complexity analysis}\label{sec3}
	In this section, we establish the convergence properties of the RF-SGS algorithm for solving problem \eqref{prob}.
	
	We first present two technical lemmas that are essential for the convergence analysis. The first lemma characterizes the solution of the proximal projection step \eqref{update-u}.
	\begin{lemma}\label{lemma:1.1}
		If $q$ is a $\mu$-strongly convex function and
		\begin{align}
			u^* = \arg\min_{u\in X} \{ q(u) + \mu_1 V(\tilde{x}, u) + \mu_2 V(\tilde{y}, u)\},
		\end{align}
		then $\forall u \in X$,  we have
		\begin{align}
			q(u^*) + \mu_1 V(\tilde{x}, u^*) + \mu_2 V(\tilde{y}, u^*) &\le q(u) + \mu_1 V(\tilde{x}, u) + \mu_2 V(\tilde{y}, u) \nonumber\\
			& \quad - (\mu_1+\mu_2) V(u^*, u) - \frac{\mu}{2} \|u-u^*\|^2.
		\end{align}
	\end{lemma}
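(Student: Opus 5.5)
Throughout we take $\mu_1,\mu_2\ge 0$ and assume $q$ is convex and $\mu$-strongly convex with respect to $\|\cdot\|$. The plan is the standard three-point argument: optimality condition, strong convexity of $q$, and the three-point identity for the Bregman distance.

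\emph{Step 1 (optimality condition).} Let $\phi(u)=\mu_1 V(\tilde{x},u)+\mu_2 V(\tilde{y},u)$. Since $V(z,\cdot)$ is differentiable with $\nabla_u V(z,u)=\nabla\omega(u)-\nabla\omega(z)$, the function $\phi$ is differentiable and
\[
\nabla\phi(u)=\mu_1\bigl(\nabla\omega(u)-\nabla\omega(\tilde{x})\bigr)+\mu_2\bigl(\nabla\omega(u)-\nabla\omega(\tilde{y})\bigr).
\]
The objective is $q+\phi+\delta_X$, and $u^*$ minimizes it. The sum rule for subdifferentials then gives some $g\in\partial q(u^*)$ with $\langle g+\nabla\phi(u^*),u-u^*\rangle\ge0$ for all $u\in X$. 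This step needs a mild qualification condition, for instance that $q$ is finite on a neighborhood of $X$ or that $\mathrm{ri}\,\mathrm{dom}\,q\cap\mathrm{ri}\,X\neq\emptyset$. This holds in the intended application, where $q$ is an affine function plus $\chi$.

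\emph{Step 2 (strong convexity of $q$).} Strong convexity gives
\[
q(u)\ge q(u^*)+\langle g,u-u^*\rangle+\tfrac{\mu}{2}\|u-u^*\|^2 .
\]
Combining this with Step 1 yields
\[
q(u^*)\le q(u)+\langle\nabla\phi(u^*),u-u^*\rangle-\tfrac{\mu}{2}\|u-u^*\|^2 .
\]

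\emph{Step 3 (three-point identity).} A direct expansion from the definition of $V$ shows, for any $z\in X$,
\[
\langle\nabla\omega(u^*)-\nabla\omega(z),\,u-u^*\rangle=V(z,u)-V(z,u^*)-V(u^*,u).
\]
Apply this with $z=\tilde{x}$, weighted by $\mu_1$, and with $z=\tilde{y}$, weighted by $\mu_2$, and sum. This gives
\[
\langle\nabla\phi(u^*),u-u^*\rangle=\mu_1V(\tilde{x},u)+\mu_2V(\tilde{y},u)-\mu_1V(\tilde{x},u^*)-\mu_2V(\tilde{y},u^*)-(\mu_1+\mu_2)V(u^*,u).
\]
Substituting this into the inequality from Step 2 and moving the terms $V(\cdot,u^*)$ to the left gives exactly the claimed inequality.

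The main obstacle is Step 1, specifically justifying the first-order optimality condition when $q$ is nonsmooth and constrained to $X$. Steps 2 and 3 are routine algebra. For Step 1, the first approach I would try is the Moreau--Rockafellar sum rule under the qualification condition above.
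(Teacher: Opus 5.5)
Your proposal is correct and follows the same route as the paper: you combine the first-order optimality condition at $u^*$, the strong-convexity inequality for $q$ at the same subgradient, and the Bregman three-point identity applied with $\tilde x$ and with $\tilde y$. The only addition is that you explicitly flag the constraint qualification needed for the subdifferential sum rule, which the paper uses implicitly when it writes down the optimality condition.
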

	\begin{proof}{Proof}
		Let $q'(u^*)\in\partial q(u^*)$. The optimality condition gives, for every $u\in X$,
		\begin{align}\label{lemma:1.1-opt}
			\left\langle q'(u^*)+\mu_1[\nabla\omega(u^*)-\nabla\omega(\tilde x)]
			+\mu_2[\nabla\omega(u^*)-\nabla\omega(\tilde y)],u-u^*\right\rangle\ge0.
		\end{align}
		The $\mu$-strong convexity of $q$ and the Bregman three-point identity yield
		\begin{align*}
			q(u)&\ge q(u^*)+\langle q'(u^*),u-u^*\rangle+\frac\mu2\|u-u^*\|^2,\\
			\langle\nabla\omega(u^*)-\nabla\omega(\tilde x),u-u^*\rangle
			&=V(\tilde x,u)-V(\tilde x,u^*)-V(u^*,u),\\
			\langle\nabla\omega(u^*)-\nabla\omega(\tilde y),u-u^*\rangle
			&=V(\tilde y,u)-V(\tilde y,u^*)-V(u^*,u).
		\end{align*}
		Substituting these identities into \eqref{lemma:1.1-opt} and rearranging gives the claimed inequality.
		\Halmos
	\end{proof}
	The second lemma provides a convenient recursive inequality for analyzing convergence of sequences; its proof can be found in Lemma 2 of \cite{Lan_sliding}.
	\begin{lemma}\label{lem2}
		Let $w_k \in (0,1]$ for $k=1,2,\dots$, and let $W_1>0$ be given. Define
		\[
		W_k := (1-w_k) W_{k-1}, \quad k \geq 2 .
		\]
		Assume that $W_k>0$ for all $k \geq 2$, and that the sequence $\{\delta_k\}_{k \geq 0}$ satisfies
		\[
		\delta_k \leq (1-w_k) \delta_{k-1} + B_k, \quad k=1,2,\dots .
		\]
		Then, for any $k \geq 1$, we have
		\[
		\delta_k \leq W_k \left[ \frac{1-w_1}{W_1} \delta_0 + \sum_{i=1}^k \frac{B_i}{W_i} \right].
		\]
	\end{lemma}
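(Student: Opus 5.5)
The plan is to divide the recursion by $W_k$ and telescope. Since every $W_k>0$, I would multiply the hypothesis $\delta_k \le (1-w_k)\delta_{k-1} + B_k$ by $1/W_k>0$, which preserves the inequality.

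For $k\ge 2$, the definition $W_k=(1-w_k)W_{k-1}$ gives $(1-w_k)/W_k = 1/W_{k-1}$. The divided inequality then reads
\[
\frac{\delta_k}{W_k} \le \frac{\delta_{k-1}}{W_{k-1}} + \frac{B_k}{W_k}, \qquad k\ge 2 .
\]
For $k=1$ there is no $W_0$, so this step is kept in its raw form:
\[
\frac{\delta_1}{W_1} \le \frac{1-w_1}{W_1}\,\delta_0 + \frac{B_1}{W_1}.
\]
This is why the factor $(1-w_1)/W_1$ multiplies $\delta_0$ in the statement.

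Next I would sum the inequalities for $i=2,\dots,k$. The sum telescopes to
\[
\frac{\delta_k}{W_k} \le \frac{\delta_1}{W_1} + \sum_{i=2}^k \frac{B_i}{W_i}.
\]
Substituting the $k=1$ bound gives
\[
\frac{\delta_k}{W_k} \le \frac{1-w_1}{W_1}\delta_0 + \sum_{i=1}^k \frac{B_i}{W_i}.
\]
Multiplying through by $W_k>0$ yields the claim. When $k=1$ the claim is exactly the raw $k=1$ inequality, so no telescoping is needed in that case.

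There is no real obstacle here. The only points needing care are the separate handling of the index $k=1$, which is where the $\delta_0$ coefficient comes from, and the positivity of each $W_i$, which justifies dividing without flipping any inequality. The assumption $w_k\le 1$ is not actually needed for the division argument; it only keeps the factors $1-w_k$ nonnegative, consistent with the positivity of the $W_k$. The argument is purely algebraic and needs no results from earlier in the paper.
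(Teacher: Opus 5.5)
Your proof is correct and takes essentially the same approach as the paper. The paper gives no proof of its own and defers to Lemma 2 of the cited gradient-sliding paper, whose argument is exactly yours: divide by $W_i>0$, use $(1-w_i)/W_i = 1/W_{i-1}$ for $i\ge 2$, keep the $i=1$ step in raw form (which produces the coefficient $(1-w_1)/W_1$ on $\delta_0$), and telescope.
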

	Next, we state several mild assumptions required for the analysis.
	\begin{assumption}\label{ass:f}
		The functions $f: X \to \mathbb{R}$ and $h: X \to \mathbb{R}$ are convex and satisfy
		\begin{align*}
			f(x) &\leq f(y) + \langle \nabla f(y), x-y \rangle + \frac{L}{2} \|x-y\|^2, \quad \forall x, y \in X, \\
			h(x) &\leq h(y) + \langle h'(y), x-y \rangle + M \|x-y\|, \quad \forall x, y \in X,
		\end{align*}
		for some $L\ge\nu\mu>0$ and $M>0$, where $h'(x) \in \partial h(x)$.
	\end{assumption}
	
	\begin{assumption}\label{ass:chi}
		The function $\chi(x)$ is $\mu$-strongly convex.
	\end{assumption}
	
	\begin{assumption}\label{ass:h}
		Let $\mathcal F_{k,t-1}$ denote the $\sigma$-field generated by all oracle samples drawn before $\xi_{k,t}$. For every $\mathcal F_{k,t-1}$-measurable point $u_{k,t-1}\in X$, there exists a constant $\sigma\ge0$ such that
		\begin{align*}
			\mathbb{E}\big[ H(u_{k,t-1}, \xi_{k,t})\mid\mathcal F_{k,t-1} \big] &= h'(u_{k,t-1}) \in \partial h(u_{k,t-1}), \\
			\mathbb{E}\big[ \| H(u_{k,t-1}, \xi_{k,t}) - h'(u_{k,t-1}) \|_*^2\mid\mathcal F_{k,t-1} \big]&\leq \sigma^2,
		\end{align*}
		where $\xi_{k,t}$ is sampled independently of $\mathcal F_{k,t-1}$.
	\end{assumption}
	We also make the following assumptions on the algorithm parameters $\beta_k$ and $\gamma_k$.
	\begin{assumption}\label{ass:1}
		The parameters $\{\beta_k\}$ and $\{\gamma_k\}$ satisfy $\beta_k \geq \dfrac{L\gamma_k}{\nu}$ for all $k \geq 1$.
	\end{assumption}
	
	\begin{assumption}\label{ass:2}
		The sequences $\{\beta_k\}$, $\{\gamma_k\}$, $\{\Gamma_k\}$ and $\{W_{k,t}\}_{1 \leq t \leq T_k}$ satisfy, for all $k \geq 2$,
		\[
		\frac{\gamma_k \!\left( \dfrac{W_{k,T_k}}{1-W_{k,T_k}} (\beta_k+\mu) + \beta_k \right)}{\Gamma_k}
		\leq 
		\frac{\gamma_{k-1} (\beta_{k-1}+\mu)}{\Gamma_{k-1} (1-W_{k-1,T_{k-1}})},
		\]
		where $W_{k,t}$ and $\Gamma_k$ are defined recursively as
		\begin{align}\label{par-wg}
			W_{k,t} = \begin{cases}
				1, & t=0, \\
				(1-w_{k,t})W_{k,t-1}, & t \geq 1,
			\end{cases} \quad
			\Gamma_k = \begin{cases}
				1, & k=0, \\
				(1-\gamma_k)\Gamma_{k-1}, & k \geq 1,
			\end{cases}
		\end{align}
		with
		\begin{align}\label{par-w}
			w_{k,t} = \frac{\beta_k+\mu}{\beta_k (1+p_{k,t}) + \mu}
			= \frac{1}{1 + \dfrac{\beta_k}{\beta_k+\mu} \, p_{k,t}}.
		\end{align}
	\end{assumption}
	\begin{theorem}
		Suppose that Assumptions \ref{ass:f}--\ref{ass:2} hold. Let $\{\bar{x}_k\}$ be the sequence generated by Algorithm \ref{alg}. Then we have
		\begin{align}
			&\mathbb{E}[\Psi (\bar{x}_N) - \Psi(x^*)] \nonumber\\
			\le&  \Gamma_N  (1-\gamma_1) [ \Psi (\bar{x}_{0}) - \Psi(x^*)  ]+ \frac{\Gamma_N  \gamma_1}{\Gamma_1} \left[  (\beta_1 +\mu) \frac{W_{1,T_1}}{1-W_{1,T_1}} +  \beta_1 \right] V(x_0,x^*) \nonumber\\
			& +  \frac{M^2+\sigma^2}{\nu}\cdot \Gamma_N\sum_{k=1}^N \frac{\gamma_k W_{k,T_k}}{\Gamma_k\beta_k (1-W_{k,T_k})}\sum_{i=1}^{T_k} \left[  \frac{\beta_k+\mu}{\beta_k (1+p_{k,i})+\mu}\cdot  \frac{1}{ p_{k,i}W_{k,i}}\right].\label{keyresult}
		\end{align}
	\end{theorem}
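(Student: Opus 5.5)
The proof follows the standard gradient-sliding template of \cite{Lan_sliding}: an outer-iteration estimate, an inner-loop estimate obtained with Lemma~\ref{lem2}, and then a telescoping of the outer recursion. The only change is that the strong convexity of $\chi$ is carried through Lemma~\ref{lemma:1.1} into the weights $w_{k,t}$.

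\textbf{Step 1 (one outer step).} Fix $k$ and $u\in X$. By convexity of $\chi$ and $h$ and the definition $\bar x_k=(1-\gamma_k)\bar x_{k-1}+\gamma_k\tilde x_k$, the $L$-smoothness of $f$ (Assumption~\ref{ass:f}) gives
\[
\Psi(\bar x_k)\le(1-\gamma_k)\Psi(\bar x_{k-1})+\gamma_k\bigl[l_f(\underline x_k,\tilde x_k)+h(\tilde x_k)+\chi(\tilde x_k)\bigr]+\tfrac{L\gamma_k^2}{2}\|\tilde x_k-x_{k-1}\|^2 .
\]
Here we use $\bar x_k-\underline x_k=\gamma_k(\tilde x_k-x_{k-1})$. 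Since $\tilde x_k$ is a convex combination of the $u_{k,t}$, Jensen's inequality and $V\ge\frac\nu2\|\cdot\|^2$, combined with Assumption~\ref{ass:1}, let us absorb the quadratic term into $\gamma_k\beta_k V(x_{k-1},\tilde x_k)$. This yields
\[
\Psi(\bar x_k)\le(1-\gamma_k)\Psi(\bar x_{k-1})+\gamma_k\bigl[\phi_k(\tilde x_k)-\phi_k(u)+l_f(\underline x_k,u)+h(u)+\chi(u)\bigr],
\]
where $\phi_k(v):=l_f(\underline x_k,v)+h(v)+\chi(v)+\beta_kV(x_{k-1},v)$, and $l_f(\underline x_k,u)\le f(u)$.

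\textbf{Step 2 (inner loop).} Apply Lemma~\ref{lemma:1.1} to \eqref{update-u} with $q(u)=l_f(\underline x_k,u)+\langle H_{k,t},u\rangle+\chi(u)$, which is $\mu$-strongly convex, and with $\mu_1=\beta_k$, $\mu_2=\beta_kp_{k,t}$. Using $\frac{\mu}{2}\|u-u_{k,t}\|^2\ge \mu V(u_{k,t},u)$ (valid under $V\le\frac12\|\cdot\|^2$), then the $M$-condition on $h$, and then Young's inequality to absorb $(M+\|\delta_{k,t}\|_*)\|u_{k,t}-u_{k,t-1}\|$ into $\beta_kp_{k,t}V(u_{k,t-1},u_{k,t})$, we expect
\[
\phi_k(u_{k,t})-\phi_k(u)+(\beta_k(1+p_{k,t})+\mu)V(u_{k,t},u)\le\beta_kp_{k,t}V(u_{k,t-1},u)+\langle\delta_{k,t},u-u_{k,t-1}\rangle+\tfrac{(M+\|\delta_{k,t}\|_*)^2}{\nu\beta_kp_{k,t}} ,
\]
where $\delta_{k,t}=H_{k,t}-h'(u_{k,t-1})$. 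This step contains the main obstacle: combining with the convex averaging \eqref{update-tu} so that the $\theta_{k,t}$ match the weights $w_{k,t}/W_{k,t}$ of Lemma~\ref{lem2} exactly. Dividing by $\beta_k(1+p_{k,t})+\mu$ and recognizing $w_{k,t}$ from \eqref{par-w}, I will apply Lemma~\ref{lem2} to $\delta_t=\phi_k(\tilde u_{k,t})-\phi_k(u)+(\beta_k+\mu)V(u_{k,t},u)$, or a suitably scaled version of it. I would first verify that the $\theta_{k,t}$ of the algorithm satisfy $\theta_{k,t}=\frac{w_{k,t}/W_{k,t}}{\sum_{i\le t}w_{k,i}/W_{k,i}}$, which holds since $p_{k,t}$ is constant in $t$. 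Taking conditional expectations kills $\langle\delta_{k,t},u-u_{k,t-1}\rangle$ by Assumption~\ref{ass:h}, because $u=x^*$ is deterministic. Bounding $\mathbb E(M+\|\delta\|_*)^2$ by $2(M^2+\sigma^2)$ up to constant bookkeeping, we then obtain
\[
\mathbb E\bigl[\phi_k(\tilde x_k)-\phi_k(x^*)\bigr]+\tfrac{\beta_k+\mu}{1-W_{k,T_k}}\mathbb E V(x_k,x^*)\le\Bigl(\tfrac{W_{k,T_k}}{1-W_{k,T_k}}(\beta_k+\mu)+\beta_k\Bigr)V(x_{k-1},x^*)+\tfrac{W_{k,T_k}}{1-W_{k,T_k}}\tfrac{M^2+\sigma^2}{\nu\beta_k}\sum_{i}\tfrac{w_{k,i}}{p_{k,i}W_{k,i}} .
\]

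\textbf{Step 3 (telescoping).} Insert the Step 2 bound into Step 1 with $u=x^*$ and divide by $\Gamma_k$. Assumption~\ref{ass:2} is precisely the statement that the coefficient of $V(x_{k-1},x^*)$ at step $k$ is dominated by the coefficient of $V(x_{k-1},x^*)$ produced at step $k-1$. Hence, summing over $k=1,\dots,N$ (equivalently, Lemma~\ref{lem2} with $w_k=\gamma_k$), the distance terms telescope and only the $k=1$ term survives. Multiplying by $\Gamma_N$ and dropping the final nonnegative distance term gives \eqref{keyresult}.
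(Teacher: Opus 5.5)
Your outline follows the paper's proof. The outer estimate uses $\phi_k=\Phi_k$, the per-step inner bound comes from Lemma~\ref{lemma:1.1}, the $M$-condition and Young, Lemma~\ref{lem2} is applied to the inner recursion, and then Lemma~\ref{lem2} with $w_k=\gamma_k$ plus Assumption~\ref{ass:2} telescopes.

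\textbf{The inner-loop step you leave open fails as literally proposed.} Write $F_t=\phi_k(u_{k,t})-\phi_k(u)$ and $\tilde F_t=\phi_k(\tilde u_{k,t})-\phi_k(u)$. With the unscaled $\delta_t=\tilde F_t+(\beta_k+\mu)V(u_{k,t},u)$, Jensen on \eqref{update-tu} puts the coefficient $1-\theta_{k,t}$, not $1-w_{k,t}$, on $\tilde F_{t-1}$. That quantity has no sign, since $x^*$ does not minimize $\phi_k$. It also leaves an uncontrolled term $(\theta_{k,t}-w_{k,t})F_t$. The scaling that works is
$\delta_t=(1-W_{k,t})\tilde F_t+(\beta_k+\mu)V(u_{k,t},u)$:
\begin{itemize}
\item Because $\theta_{k,t}=w_{k,t}/(1-W_{k,t})$ and $1-W_{k,t}-w_{k,t}=(1-w_{k,t})(1-W_{k,t-1})$, Jensen gives $(1-W_{k,t})\tilde F_t\le(1-w_{k,t})(1-W_{k,t-1})\tilde F_{t-1}+w_{k,t}F_t$.
\item Add your per-step bound multiplied by $w_{k,t}$, i.e.\ $w_{k,t}F_t+(\beta_k+\mu)V(u_{k,t},u)\le(1-w_{k,t})(\beta_k+\mu)V(u_{k,t-1},u)+\dots$. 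This gives exactly the recursion of Lemma~\ref{lem2}.
\item Since $W_{k,0}=1$, you get $\delta_0=(\beta_k+\mu)V(x_{k-1},u)$. Dividing the resulting bound by $1-W_{k,T_k}$ reproduces \eqref{temp4}.
\end{itemize}
The paper instead applies Lemma~\ref{lem2} to $(\beta_k+\mu)V(u_{k,t},u)$ alone, with the function values placed in $B_t$. It then uses Jensen once on the closed form $\tilde u_{k,T_k}=\frac{W_{k,T_k}}{1-W_{k,T_k}}\sum_i\frac{w_{k,i}}{W_{k,i}}u_{k,i}$. The two routes are equivalent.

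\textbf{Bookkeeping errors.} Several terms are misplaced or mis-scaled.
\begin{itemize}
\item Your Step~1 display is false as written. Because $\phi_k(u)$ contains $\beta_kV(x_{k-1},u)$, the bracket must also carry $+\beta_kV(x_{k-1},u)$, coming from $\phi_k(x^*)\le\Psi(x^*)+\beta_kV(x_{k-1},x^*)$.
\item Conversely, the ``$+\beta_k$'' in your Step~2 coefficient is not produced by the inner loop. The inner loop only yields $\frac{W_{k,T_k}}{1-W_{k,T_k}}(\beta_k+\mu)V(x_{k-1},x^*)$. The two errors cancel in the combined recursion, which is the paper's \eqref{eq10}.
\item Young's inequality gives $\frac{(M+\|\delta_{k,t}\|_*)^2}{2\nu\beta_kp_{k,t}}$, and you need that factor $\frac12$. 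With your $\frac{(M+\|\delta_{k,t}\|_*)^2}{\nu\beta_kp_{k,t}}$ and $(a+b)^2\le 2a^2+2b^2$, you would get $2(M^2+\sigma^2)$ instead of the stated $M^2+\sigma^2$.
\item Since $x_{k-1}$ is random, either keep the inequalities pathwise until the end, as the paper does, or write $\mathbb E\,V(x_{k-1},x^*)$ on the right.
\item Lemma~\ref{lem2} with $W_k=\Gamma_k$ has $(1-\gamma_1)/\Gamma_1=1$. Your telescoping therefore yields $\Gamma_N[\Psi(\bar x_0)-\Psi(x^*)]$, not the factor $\Gamma_N(1-\gamma_1)$ written in \eqref{keyresult}. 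The paper's \eqref{eq11} contains the same slip. It is harmless for Theorem~\ref{thm2}, where it only turns $c^{N+1}\Delta_0$ into $c^N\Delta_0\le c^{N/2}\Delta_0$.
\end{itemize}
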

	
	\begin{proof}{Proof}
		Define
		\begin{align*}
			\Phi_k(u) &= l_f(\underline{x}_k, u) + h(u) + \chi(u) + \beta_k V(x_{k-1}, u),\\
			l_h(u_{k,t-1}, u) &= h(u_{k,t-1}) + \langle h'(u_{k,t-1}), u-u_{k,t-1}\rangle,\\
			\tilde{l}_h(u_{k,t-1}, u) &= h(u_{k,t-1}) + \langle H(u_{k,t-1}, \xi_{k,t}), u-u_{k,t-1}\rangle .
		\end{align*}
		By Assumption \ref{ass:f} and the definition of $l_h$, we have
		$h(u_{k,t}) \le l_h(u_{k,t-1}, u_{k,t}) + M\|u_{k,t}-u_{k,t-1}\|$.
		Adding $l_f(\underline{x}_k, u_{k,t}) + \beta_k V(x_{k-1}, u_{k,t}) + \chi(u_{k,t})$ to both sides and using the definitions of $\Phi_k$ and $\tilde{l}_h$ yields
		\begin{align}
			\Phi_k(u_{k,t}) &\le l_f(\underline{x}_k, u_{k,t}) + \tilde{l}_h(u_{k,t-1}, u_{k,t})
			+ \beta_k V(x_{k-1}, u_{k,t}) + \chi(u_{k,t}) \nonumber \\
			&\quad + (M + \|\delta_{k,t}\|_*)\|u_{k,t}-u_{k,t-1}\|, \label{eq1}
		\end{align}
		where $\delta_{k,t} = H(u_{k,t-1}, \xi_{k,t}) - h'(u_{k,t-1})$.
		
		From \eqref{update-u}, the strong convexity of $\chi$ and Lemma \ref{lemma:1.1}, we obtain
		\begin{align}
			&l_f(\underline{x}_k, u_{k,t}) + \tilde{l}_h(u_{k,t-1}, u_{k,t}) + \beta_k V(x_{k-1}, u_{k,t})
			+ \chi(u_{k,t}) + \beta_k p_{k,t} V(u_{k,t-1}, u_{k,t}) \nonumber \\
			&\le l_f(\underline{x}_k, u) + \tilde{l}_h(u_{k,t-1}, u) + \beta_k V(x_{k-1}, u)
			+ \chi(u) + \beta_k p_{k,t} V(u_{k,t-1}, u) \nonumber \\
			&\quad - \beta_k(1+p_{k,t})V(u_{k,t}, u) - \frac{\mu}{2}\|u-u_{k,t}\|^2 \nonumber \\
			&\le \Phi_k(u) + \beta_k p_{k,t} V(u_{k,t-1}, u) - \beta_k(1+p_{k,t})V(u_{k,t}, u)
			- \frac{\mu}{2}\|u-u_{k,t}\|^2 + \langle\delta_{k,t}, u-u_{k,t-1}\rangle, \label{eq2}
		\end{align}
		where the last inequality uses the convexity of $h$ and the definition of $\Phi_k$.
		Combining \eqref{eq1} and \eqref{eq2} gives
		\begin{align}
			\Phi_k(u_{k,t}) &\le \Phi_k(u) + \beta_k p_{k,t} V(u_{k,t-1}, u) - \beta_k(1+p_{k,t})V(u_{k,t}, u)
			- \frac{\mu}{2}\|u-u_{k,t}\|^2 \nonumber \\
			&\quad + \langle\delta_{k,t}, u-u_{k,t-1}\rangle - \beta_k p_{k,t} V(u_{k,t-1}, u_{k,t})
			+ (M+\|\delta_{k,t}\|_*)\|u_{k,t}-u_{k,t-1}\|. \label{eq3.1}
		\end{align}
		By the strong convexity of $\omega$,
		\begin{align}
			&{-}\beta_k p_{k,t} V(u_{k,t-1}, u_{k,t}) + (M+\|\delta_{k,t}\|_*)\|u_{k,t}-u_{k,t-1}\| \nonumber \\
			&\le -\frac{\nu\beta_k p_{k,t}}{2}\|u_{k,t}-u_{k,t-1}\|^2
			+ (M+\|\delta_{k,t}\|_*)\|u_{k,t}-u_{k,t-1}\| \le \frac{(M+\|\delta_{k,t}\|_*)^2}{2\nu\beta_k p_{k,t}}, \label{eq3.2}
		\end{align}
		where the last inequality uses $-at^2/2 + bt \le b^2/(2a)$ for any $a>0$.
		From \eqref{eq3.1} and \eqref{eq3.2} we obtain
		\begin{align}
			\Phi_k(u_{k,t}) &\le \Phi_k(u) + \beta_k p_{k,t} V(u_{k,t-1}, u)
			- [\beta_k(1+p_{k,t})+\mu]V(u, u_{k,t}) \nonumber \\
			&\quad + \langle\delta_{k,t}, u-u_{k,t-1}\rangle
			+ \frac{(M+\|\delta_{k,t}\|_*)^2}{2\nu\beta_k p_{k,t}}, \label{eq3}
		\end{align}
		where we also used $V(u_{k,t},u) \le \frac12\|u-u_{k,t}\|^2$.
		\eqref{eq3} implies
		\begin{align}
			&(\beta_k+\mu)V(u_{k,t},u) + \frac{\beta_k+\mu}{\beta_k(1+p_{k,t})+\mu}\bigl[\Phi_k(u_{k,t})-\Phi_k(u)\bigr] \nonumber \\
			&\le \frac{(\beta_k+\mu)\beta_k p_{k,t}}{\beta_k(1+p_{k,t})+\mu}V(u_{k,t-1},u)
			+ \frac{\beta_k+\mu}{\beta_k(1+p_{k,t})+\mu}
			\Bigl(\langle\delta_{k,t}, u-u_{k,t-1}\rangle
			+ \frac{M^2+\|\delta_{k,t}\|_*^2}{\nu\beta_k p_{k,t}}\Bigr). \label{eq-rec}
		\end{align}
		Applying Lemma \ref{lem2} to \eqref{eq-rec} yields
		\begin{align}
			&\frac{\beta_k+\mu}{1-W_{k,T_k}}V(u_{k,T_k},u)
			+ \frac{W_{k,T_k}}{1-W_{k,T_k}}\sum_{i=1}^{T_k}
			\frac{\beta_k+\mu}{\beta_k(1+p_{k,i})+\mu}
			\frac{\Phi_k(u_{k,i})-\Phi_k(u)}{W_{k,i}} \nonumber \\
			&\le (\beta_k+\mu)\frac{W_{k,T_k}}{1-W_{k,T_k}}V(u_{k,0},u) \nonumber \\
			&\quad + \frac{W_{k,T_k}}{1-W_{k,T_k}}\sum_{i=1}^{T_k}
			\frac{\beta_k+\mu}{\beta_k(1+p_{k,i})+\mu}
			\Bigl(\frac{\langle\delta_{k,i}, u-u_{k,i-1}\rangle}{W_{k,i}}
			+ \frac{M^2+\|\delta_{k,i}\|_*^2}{\nu\beta_k p_{k,i} W_{k,i}}\Bigr). \label{eq9}
		\end{align}
		Let $\theta_{k,t} = \frac{W_{k,t-1}-W_{k,t}}{(1-W_{k,t})W_{k,t-1}}$. From the definition of $\tilde{u}_{k,t}$ in \eqref{update-tu} and $W_{k,0}=1$ we have
		\begin{align*}
			\tilde{u}_{k,t} &= \frac{W_{k,t}}{1-W_{k,t}}
			\Bigl(\frac{1-W_{k,t-1}}{W_{k,t-1}}\tilde{u}_{k,t-1} + \frac{w_{k,t}}{W_{k,t}}u_{k,t}\Bigr) = \cdots = \frac{W_{k,t}}{1-W_{k,t}}\sum_{i=1}^t \frac{w_{k,i}}{W_{k,i}}u_{k,i} .
		\end{align*}
		Using the convexity of $\Phi_k$ and the definition of $w_{k,t}$ in \eqref{par-w},
		\begin{align}
			\Phi_k(\tilde{x}_k) - \Phi_k(u)
			&= \Phi_k(\tilde{u}_{k,T_k}) - \Phi_k(u) \le \frac{W_{k,T_k}}{1-W_{k,T_k}}\sum_{i=1}^{T_k}
			\frac{\beta_k+\mu}{\beta_k(1+p_{k,i})+\mu}
			\frac{\Phi_k(u_{k,i})-\Phi_k(u)}{W_{k,i}}. \label{temp3}
		\end{align}
		Setting $u_{k,0} = x_{k-1}$ and $x_k = u_{k,T_k}$ in \eqref{eq9} and combining with \eqref{temp3} gives
		\begin{align}
			\Phi_k(\tilde{x}_k) - \Phi_k(u)
			&\le (\beta_k+\mu)\frac{W_{k,T_k}}{1-W_{k,T_k}}V(x_{k-1},u)
			- \frac{\beta_k+\mu}{1-W_{k,T_k}}V(x_k,u) \nonumber \\
			&\quad + \frac{W_{k,T_k}}{1-W_{k,T_k}}\sum_{i=1}^{T_k}
			\frac{\beta_k+\mu}{\beta_k(1+p_{k,i})+\mu}
			\Bigl(\frac{\langle\delta_{k,i}, u-u_{k,i-1}\rangle}{W_{k,i}}
			+ \frac{M^2+\|\delta_{k,i}\|_*^2}{\nu\beta_k p_{k,i} W_{k,i}}\Bigr). \label{temp4}
		\end{align}
		On the other hand, Assumption \ref{ass:f} implies
		\begin{align}
			f(\bar{x}_k) &\le l_f(\underline{x}_k, \bar{x}_k) + \frac{L}{2}\|\underline{x}_k-\bar{x}_k\|^2 \nonumber \\
			&\le (1-\gamma_k)f(\bar{x}_{k-1})
			+ \gamma_k\bigl[l_f(\underline{x}_k, \tilde{x}_k) + \beta_k V(x_{k-1}, \tilde{x}_k)\bigr] - \bigl(\gamma_k\beta_k - \frac{L\gamma_k^2}{\nu}\bigr)V(x_{k-1}, \tilde{x}_k), \label{eq4}
		\end{align}
		where the last inequality uses the strong convexity of $\omega$ and convexity of $f$.
		By convexity of $h$ and strong convexity of $\chi$,
		\begin{align}
			h(\bar{x}_k) + \chi(\bar{x}_k)
			&\le (1-\gamma_k)\bigl[h(\bar{x}_{k-1}) + \chi(\bar{x}_{k-1})\bigr]
			+ \gamma_k\bigl[h(\tilde{x}_k) + \chi(\tilde{x}_k)\bigr] \nonumber \\
			&\quad - \frac{\mu}{2}\gamma_k(1-\gamma_k)\|\bar{x}_{k-1} - \tilde{x}_k\|^2. \label{eq5}
		\end{align}
		Adding \eqref{eq4} and \eqref{eq5} and using the definition of $\Psi$,
		\begin{align}
			\Psi(\bar{x}_k) &\le (1-\gamma_k)\Psi(\bar{x}_{k-1}) + \gamma_k\Phi_k(\tilde{x}_k) \nonumber \\
			&\quad - \frac{\mu}{2}\gamma_k(1-\gamma_k)\|\bar{x}_{k-1} - \tilde{x}_k\|^2
			- \bigl(\gamma_k\beta_k - \frac{L\gamma_k^2}{\nu}\bigr)V(x_{k-1}, \tilde{x}_k). \nonumber
		\end{align}
		Hence,
		\begin{align}
			\Psi(\bar{x}_k) - \Psi(u)
			&\le (1-\gamma_k)\bigl[\Psi(\bar{x}_{k-1}) - \Psi(u)\bigr]
			+ \gamma_k\bigl[\Phi_k(\tilde{x}_k) - \Psi(u)\bigr] \nonumber \\
			&\quad - \frac{\mu}{2}\gamma_k(1-\gamma_k)\|\bar{x}_{k-1} - \tilde{x}_k\|^2
			- \bigl(\gamma_k\beta_k - \frac{L\gamma_k^2}{\nu}\bigr)V(x_{k-1}, \tilde{x}_k). \label{temp1}
		\end{align}
		By definition of $\Phi_k$, for any $u\in X$,
		\begin{align}
			\Phi_k(u) &\le f(u) + h(u) + \chi(u) + \beta_k V(x_{k-1}, u) = \Psi(u) + \beta_k V(x_{k-1}, u). \label{temp2}
		\end{align}
		Substituting \eqref{temp2} into \eqref{temp1} yields
		\begin{align}
			\Psi(\bar{x}_k) - \Psi(u)
			&\le (1-\gamma_k)\bigl[\Psi(\bar{x}_{k-1}) - \Psi(u)\bigr]
			+ \gamma_k\bigl[\Phi_k(\tilde{x}_k) - \Phi_k(u) + \beta_k V(x_{k-1}, u)\bigr] \nonumber \\
			&\quad - \bigl(\gamma_k\beta_k - \frac{L\gamma_k^2}{\nu}\bigr)V(x_{k-1}, \tilde{x}_k)
			- \frac{\mu}{2}\gamma_k(1-\gamma_k)\|\bar{x}_{k-1} - \tilde{x}_k\|^2. \label{eq7}
		\end{align}
		Using Assumption \ref{ass:1} we obtain
		\begin{align}
			\Psi(\bar{x}_k) - \Psi(u)
			&\le (1-\gamma_k)\bigl[\Psi(\bar{x}_{k-1}) - \Psi(u)\bigr]
			+ \gamma_k\bigl[\Phi_k(\tilde{x}_k) - \Phi_k(u) + \beta_k V(x_{k-1}, u)\bigr]. \label{eq8}
		\end{align}
		Now combine \eqref{temp4} and \eqref{eq8}:
		\begin{align}
			\Psi(\bar{x}_k) - \Psi(u)
			&\le (1-\gamma_k)\bigl[\Psi(\bar{x}_{k-1}) - \Psi(u)\bigr] \nonumber \\
			&\quad + \gamma_k\Bigl[
			\Bigl(\frac{(\beta_k+\mu)W_{k,T_k}}{1-W_{k,T_k}} + \beta_k\Bigr)V(x_{k-1}, u)
			- \frac{\beta_k+\mu}{1-W_{k,T_k}}V(x_k,u)
			\Bigr] \nonumber \\
			&\quad + \gamma_k\frac{W_{k,T_k}}{1-W_{k,T_k}}
			\sum_{i=1}^{T_k}\frac{\beta_k+\mu}{\beta_k(1+p_{k,i})+\mu}
			\Bigl(\frac{\langle\delta_{k,i}, u-u_{k,i-1}\rangle}{W_{k,i}}
			+ \frac{M^2+\|\delta_{k,i}\|_*^2}{\nu\beta_k p_{k,i} W_{k,i}}\Bigr). \label{eq10}
		\end{align}
		Applying Lemma \ref{lem2} to \eqref{eq10} gives
		\begin{align}
			&\Psi(\bar{x}_N) - \Psi(u) \nonumber \\
			&\le \Gamma_N (1-\gamma_1)\bigl[\Psi(\bar{x}_{0}) - \Psi(u)\bigr] \nonumber \\
			&\quad + \Gamma_N\sum_{k=1}^N \frac{\gamma_k}{\Gamma_k}
			\Bigl[\Bigl(\frac{(\beta_k+\mu)W_{k,T_k}}{1-W_{k,T_k}} + \beta_k\Bigr)V(x_{k-1}, u)
			- \frac{\beta_k+\mu}{1-W_{k,T_k}}V(x_k,u)\Bigr] \nonumber \\
			&\quad + \Gamma_N\sum_{k=1}^N \frac{\gamma_k W_{k,T_k}}{\Gamma_k(1-W_{k,T_k})}
			\sum_{i=1}^{T_k}\frac{\beta_k+\mu}{\beta_k(1+p_{k,i})+\mu}
			\Bigl(\frac{\langle\delta_{k,i}, u-u_{k,i-1}\rangle}{W_{k,i}}
			+ \frac{M^2+\|\delta_{k,i}\|_*^2}{\nu\beta_k p_{k,i} W_{k,i}}\Bigr). \label{eq11}
		\end{align}
		Under Assumption \ref{ass:2}, inequality \eqref{eq11} simplifies to
		\begin{align}
			&\Psi(\bar{x}_N) - \Psi(u) \nonumber \\
			&\le \Gamma_N (1-\gamma_1)\bigl[\Psi(\bar{x}_{0}) - \Psi(u)\bigr]
			+ \frac{\Gamma_N\gamma_1}{\Gamma_1}
			\Bigl[(\beta_1+\mu)\frac{W_{1,T_1}}{1-W_{1,T_1}} + \beta_1\Bigr]V(x_0,u) \nonumber \\
			&\quad + \Gamma_N\sum_{k=1}^N \frac{\gamma_k W_{k,T_k}}{\Gamma_k(1-W_{k,T_k})}
			\sum_{i=1}^{T_k}\frac{\beta_k+\mu}{\beta_k(1+p_{k,i})+\mu}
			\Bigl(\frac{\langle\delta_{k,i}, u-u_{k,i-1}\rangle}{W_{k,i}}
			+ \frac{M^2+\|\delta_{k,i}\|_*^2}{\nu\beta_k p_{k,i} W_{k,i}}\Bigr). \label{eq12}
		\end{align}
		By Assumption~\ref{ass:h}, $u_{k,i-1}$ is $\mathcal F_{k,i-1}$-measurable and
		\begin{align*}
			\mathbb E[\langle\delta_{k,i},x^*-u_{k,i-1}\rangle\mid\mathcal F_{k,i-1}]
			&=\langle\mathbb E[\delta_{k,i}\mid\mathcal F_{k,i-1}],x^*-u_{k,i-1}\rangle=0.
		\end{align*}
		Setting $u=x^*$ and taking expectations in \eqref{eq12} therefore yields \eqref{keyresult}.
		\Halmos
	\end{proof}
	\begin{theorem}\label{thm2}
		Suppose that Assumptions \ref{ass:f}--\ref{ass:2} hold. Let $\{\bar{x}_k\}$ be the sequence generated by Algorithm \ref{alg}. Set
		\begin{align}
			\beta_k &=\beta=\frac{L(1-c)}{\nu},\quad \gamma_k=\gamma=1-c,\quad T_k =\left\lceil \frac{1}{c^{k/2}}\cdot \frac{(\beta+\mu)(1-c)}{ c(\beta+\mu) -\beta}\right\rceil,\label{para-beta}\\
			N&= \max\left\{1,\left\lceil\frac{2 \log \max\{\frac{A}{\epsilon},1\}}{\log (\frac{1}{c})}\right\rceil\right\},\quad p_{k,t} = \frac{\beta+\mu}{\beta} \cdot  \frac{1}{c^{k/2}}, \quad\theta_{k,t} = \frac{1- \frac{1}{1+c^{k/2}} }{1- \frac{1}{(1+c^{k/2})^t} }.\label{para-N}
		\end{align}
		where $c = \dfrac{\sqrt{\frac{L}{\mu\nu}}}{1+\sqrt{\frac{L}{\mu\nu}}}$.
		Then we have
		\begin{align}\label{thm:1}
			\mathbb{E}\bigl[\Psi(\bar{x}_N) - \Psi(x^*)\bigr] \leq c^{N/2} \cdot A,
		\end{align}
		where $A = \Delta_0 + (\beta+\mu)(1-c) V(x_0,x^*) + \dfrac{2(M^2+\sigma^2)}{\nu (\beta+\mu)}$ and $\Delta_0 = \Psi(\bar{x}_0) - \Psi(x^*)$.
		Consequently, to obtain an $\epsilon$-solution, the total number of evaluations for $\nabla f$ and $h'$, respectively, are bounded by
		\begin{align}\label{thm:2}
			\mathcal{O}\left(1+ \sqrt{\frac{L}{\nu\mu}} \,
			\log \max\!\Bigl\{1, \frac{A}{\epsilon}\Bigr\} \right)\quad \text{and}\quad \mathcal{O}\left( \frac{AL}{\nu\mu\epsilon} + 
			\sqrt{\frac{L}{\nu\mu}} \,
			\log \max\!\Bigl\{1, \frac{A}{\epsilon}\Bigr\} \right).
		\end{align}
	\end{theorem}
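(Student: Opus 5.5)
The plan is to specialize the bound \eqref{keyresult} of the preceding theorem to the constant parameters in \eqref{para-beta}--\eqref{para-N}. Everything reduces to closed-form geometric sums in two quantities: $\Gamma_k=c^k$, which holds because $\gamma_k=1-c$, and the inner weights $W_{k,t}$.

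\textbf{Step 1: inner weights.} Put $q_k:=c^{k/2}$. Since $\frac{\beta}{\beta+\mu}p_{k,t}=1/q_k$, the definition \eqref{par-w} gives
\[
w_{k,t}=\frac{q_k}{1+q_k},\qquad 1-w_{k,t}=\frac{1}{1+q_k},\qquad W_{k,t}=(1+q_k)^{-t}.
\]
This also confirms that $\theta_{k,t}$ in \eqref{para-N} equals $\frac{W_{k,t-1}-W_{k,t}}{(1-W_{k,t})W_{k,t-1}}$, as the analysis requires. The double sum in \eqref{keyresult} then evaluates as
\[
\sum_{i=1}^{T_k}\frac{w_{k,i}}{p_{k,i}W_{k,i}}=\frac{\beta q_k^2}{(\beta+\mu)(1+q_k)}\sum_{i=1}^{T_k}(1+q_k)^i=\frac{\beta q_k\bigl[(1+q_k)^{T_k}-1\bigr]}{\beta+\mu}.
\]
After multiplying by $\frac{W_{k,T_k}}{1-W_{k,T_k}}$ this becomes exactly $\frac{\beta q_k}{\beta+\mu}$. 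So the $k$-th noise term equals $\frac{M^2+\sigma^2}{\nu}\cdot c^{N-k}(1-c)\frac{q_k}{\beta+\mu}$.

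Summing over $k$ gives $\frac{M^2+\sigma^2}{\nu(\beta+\mu)}(1-c)c^{N}\sum_{k=1}^N c^{-k/2}$. The geometric sum is at most $c^{-N/2}/(1-\sqrt c)$, and $(1-c)/(1-\sqrt c)=1+\sqrt c\le 2$. Hence the noise term is bounded by $c^{N/2}\frac{2(M^2+\sigma^2)}{\nu(\beta+\mu)}$.

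\textbf{Step 2: Assumption~\ref{ass:2} and the choice of $T_k$.} Assumption~\ref{ass:1} holds with equality, since $\beta=L\gamma/\nu$. The identity $c^2\mu\nu=L(1-c)^2$ follows from the definition of $c$, and it gives $\beta=\mu c^2/(1-c)$. Consequently $c(\beta+\mu)-\beta>0$.

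With constant $\beta,\gamma$, Assumption~\ref{ass:2} reads
\[
\frac{W_{k,T_k}}{1-W_{k,T_k}}(\beta+\mu)+\beta\le \frac{c(\beta+\mu)}{1-W_{k-1,T_{k-1}}}.
\]
It suffices to show $\frac{W_{k,T_k}}{1-W_{k,T_k}}\le \frac{c(\beta+\mu)-\beta}{\beta+\mu}$. Indeed, the right-hand side of the display is at least $c(\beta+\mu)$, so the display then holds.

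Writing $D:=\frac{(\beta+\mu)(1-c)}{c(\beta+\mu)-\beta}$, the target inequality is equivalent to $(1+q_k)^{T_k}\ge 1+\frac{\beta+\mu}{c(\beta+\mu)-\beta}$. By Bernoulli's inequality and $T_k\ge D/q_k$, we have $(1+q_k)^{T_k}\ge 1+D$. It therefore remains to check $D\ge \frac{\beta+\mu}{c(\beta+\mu)-\beta}$, i.e.\ $1-c\ge 1$. This fails. So the verification needs a different route: either $\log(1+q)\ge q/(1+q)$ combined with $q_k\le 1$, which gives $(1+q_k)^{T_k}\ge e^{D/2}$, or a sharper use of the identity $\beta=\mu c^2/(1-c)$.

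\textbf{This is the step I expect to be the main obstacle.} The constants in $T_k$ are tuned to make Assumption~\ref{ass:2} hold. Should the cruder bounds above prove insufficient, I would work directly with the exact expression for the inner factor rather than bounding it.

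\textbf{Step 3: the remaining terms of \eqref{keyresult}.} The first term is $c^N(1-c)\Delta_0\le c^{N/2}\Delta_0$. For the second term, the bound from Step 2 gives $\frac{W_{1,T_1}}{1-W_{1,T_1}}(\beta+\mu)+\beta\le c(\beta+\mu)$. The term is then at most $c^{N-1}(1-c)\,c(\beta+\mu)V(x_0,x^*)\le c^{N/2}(\beta+\mu)(1-c)V(x_0,x^*)$. Adding the three pieces yields \eqref{thm:1}.

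\textbf{Step 4: complexity.} The choice of $N$ gives $c^{N/2}A\le\epsilon$. Since $\log(1/c)\ge 1/(1+\sqrt{L/(\mu\nu)})$, this yields the gradient count in \eqref{thm:2}.

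For the subgradient count,
\[
\sum_{k=1}^N T_k\le N+D\sum_{k=1}^N c^{-k/2}\lesssim N+\frac{D\,c^{-N/2}}{1-\sqrt c}.
\]
Next, $c^{-N/2}\lesssim A/\epsilon$. With $\beta=\mu c^2/(1-c)$ we get $c(\beta+\mu)-\beta=\mu c$ and $\beta+\mu=\mu\frac{1-c+c^2}{1-c}$, so $D=(1-c+c^2)/c=O(1)$. Finally, $1/(1-\sqrt c)=O(1+\sqrt{L/(\mu\nu)})$. Together these give a bound of order $\sqrt{L/(\mu\nu)}\,A/\epsilon\le \frac{AL}{\nu\mu\epsilon}$, which matches the stated subgradient bound.
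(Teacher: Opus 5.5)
Your overall route matches the paper's. You use the closed forms $W_{k,t}=(1+c^{k/2})^{-t}$ and $\Gamma_k=c^k$, split \eqref{keyresult} into three terms, and sum geometric series. Step~1 and the gradient count are correct. (The first term is actually $c^{N+1}\Delta_0$, not $c^N(1-c)\Delta_0$, but this is harmless.)

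The genuine gap is Step~2: you never prove
\[
\frac{W_{k,T_k}}{1-W_{k,T_k}}(\beta+\mu)+\beta\le c(\beta+\mu),
\]
and you cannot skip it. Even if Assumption~\ref{ass:2} is taken as a hypothesis, it only links consecutive indices $k\ge2$ and gives no upper bound at $k=1$. Step~3 needs exactly this inequality at $k=1$ to turn the second term into $c^N(1-c)(\beta+\mu)V(x_0,x^*)$. Without it, \eqref{thm:1} with the stated $A$ is not established.

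Neither of your fallbacks closes the gap. First fix a slip: $c(\beta+\mu)-\beta=\mu c(1-c)$, not $\mu c$. With $x:=1/(1-c)=1+\sqrt{L/(\mu\nu)}\ge2$, this gives $D=x+\frac{1}{x-1}$, and the requirement becomes $(1+q_k)^{T_k}\ge 1+xD$. Your bound $e^{D/2}$ gives only $e^{3/2}\approx4.5<7$ at $L=\mu\nu$, and it remains insufficient for all $\sqrt{L/(\mu\nu)}$ up to about $7$.

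What is needed is sharper. Since $q\mapsto(1+q)^{1/q}$ is decreasing and $q_k\le\sqrt c$, you get $(1+q_k)^{T_k}\ge(1+\sqrt c)^{D/\sqrt c}$. You must then check the one-variable inequality $(1+\sqrt c)^{D/\sqrt c}\ge 1+xD$ for all $x\ge2$. It holds, but only barely. Near $x=3$ the two sides are about $12.9$ and $11.5$, so even the estimate $(1+q)^{1/q}\ge2$, which gives $2^{3.5}\approx11.3$, fails there. The paper asserts $W_{k,T_k}\le\frac{c(\beta+\mu)-\beta}{\mu+c(\beta+\mu)}$ directly from the definition of $T_k$; your write-up has to supply this estimate explicitly.

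The same slip corrupts Step~4. In fact $D=\frac{1-c+c^2}{c(1-c)}=\Theta\bigl(1+\sqrt{L/(\mu\nu)}\bigr)$, not $O(1)$. So your intermediate claim $\sum_k T_k=O\bigl(N+\sqrt{L/(\mu\nu)}\,A/\epsilon\bigr)$ is false. The correct count, using $L\ge\nu\mu$, is
\[
O\bigl(N+(1+\sqrt{L/(\mu\nu)})^2A/\epsilon\bigr)=O\Bigl(N+\frac{AL}{\nu\mu\epsilon}\Bigr).
\]
This is exactly the stated bound. Your conclusion survived only because you relaxed a wrong intermediate estimate up to it.
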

	\begin{proof}{Proof}
		From \eqref{para-beta} we have $\beta = L\gamma/\nu$, which immediately satisfies Assumption \ref{ass:1}. Moreover, a simple calculation gives
		$\beta \le \frac{c}{1-c}\,\mu$,
		which is equivalent to
		$c(\beta+\mu) - \beta > 0. $
		Hence $T_k$ is well defined.
		Let $c_2^{(k)} = \dfrac{1}{1+c^{k/2}}$. Setting $p_{k,t} = \dfrac{\beta+\mu}{\beta}\cdot\dfrac{c_2^{(k)}}{1-c_2^{(k)}}$ and 
		$\tilde{p}_{k,t} = \dfrac{\beta}{\beta+\mu}p_{k,t} = \dfrac{c_2^{(k)}}{1-c_2^{(k)}}$, we obtain from \eqref{par-w} that $w_{k,t} = \dfrac{1}{1+\tilde{p}_{k,t}}$ and consequently $W_{k,t} = (c_2^{(k)})^t$. Using the definition of $T_k$, we have
		$$
		W_{k,T_k} = \Bigl(\frac{1}{1+c^{k/2}}\Bigr)^{T_k}
		\le \frac{c(\beta+\mu)-\beta}{\mu+c(\beta+\mu)}.
		$$
		Therefore,
		\begin{align}
			\frac{(\beta+\mu)W_{k,T_k} }{\bigl(1-W_{k,T_k}\bigr) }+ \beta\le c(\beta+\mu), \label{thm2:Wk}
		\end{align}
		and Assumption \ref{ass:2} is satisfied.
		Substituting the constant parameters into \eqref{keyresult} yields
		\begin{align}
			&\mathbb{E}\bigl[\Psi(\bar{x}_N) - \Psi(x^*)\bigr] \nonumber \\
			&\le \Gamma_N (1-\gamma)\bigl[\Psi(\bar{x}_0) - \Psi(x^*)\bigr]
			+ \frac{\Gamma_N\gamma}{\Gamma_1}
			\Bigl[(\beta+\mu)\frac{W_{1,T_1}}{1-W_{1,T_1}} + \beta\Bigr]V(x_0,x^*) \nonumber \\
			&\quad + \frac{M^2+\sigma^2}{\nu}\,
			\Gamma_N\sum_{k=1}^N \frac{\gamma}{\Gamma_k(\beta+\mu)}
			\sum_{i=1}^{T_k}
			\Bigl[\frac{W_{k,T_k}}{(1+\tilde{p}_{k,i})\tilde{p}_{k,i} W_{k,i} (1-W_{k,T_k})}\Bigr]. \label{key}
		\end{align}
		First, from the definitions of $\gamma$ and $\Gamma_k$ we obtain $\Gamma_k = c^k$. Setting $\Delta_0 = \Psi(\bar{x}_0) - \Psi(x^*)$, we have
		\begin{align}
			\Gamma_N (1-\gamma)\bigl[\Psi(\bar{x}_0) - \Psi(x^*)\bigr] = c^N \cdot c\Delta_0. \label{TermI}
		\end{align}
		Second, using \eqref{thm2:Wk} together with the definitions of $\gamma$ and $\Gamma_k$ yields
		\begin{align}
			\frac{\Gamma_N \gamma}{\Gamma_1}
			\Bigl[(\beta+\mu)\frac{W_{1,T_1}}{1-W_{1,T_1}} + \beta\Bigr]V(x_0,x^*)
			\leq c^N \cdot (1-c)(\beta+\mu) V(x_0,x^*). \label{TermII}
		\end{align}
		Finally, substituting $W_{k,i}=(c_2^{(k)})^i$ and $\widetilde {p}_{k,i}=c_2^{(k)}/(1-c_2^{(k)})$ and summing the resulting geometric series gives
		\[
		\sum_{i=1}^{T_k} 
		\frac{W_{k,T_k}}{(1+\tilde{p}_{k,i})\tilde{p}_{k,i} W_{k,i} (1-W_{k,T_k})}
		= \frac{1-c_2^{(k)}}{c_2^{(k)}} = c^{k/2},
		\]
		and consequently
		\begin{align}
			&\quad\frac{M^2+\sigma^2}{\nu}\cdot \Gamma_N\sum_{k=1}^N \frac{\gamma_k}{\Gamma_k(\beta_k+\mu)}\sum_{i=1}^{T_k} \left[ \frac{ W_{k,T_k}}{(1+\tilde{p}_{k,i}) \tilde{p}_{k,i}W_{k,i} (1-W_{k,T_k})}\right] \nonumber\\
			&= \frac{M^2+\sigma^2}{\nu}\cdot c^N \sum_{k=1}^N \frac{1-c}{c^k (\beta+\mu)}\cdot c^{k/2}  \le c^{N/2}\cdot \frac{M^2+\sigma^2}{\nu (\beta+\mu)} (1+\sqrt{c}).\label{TermIII}
		\end{align}
		Combining \eqref{key}, \eqref{TermI}, \eqref{TermII} and \eqref{TermIII}, we obtain
		\begin{align}
			\mathbb{E}\bigl[\Psi(\bar{x}_N) - \Psi(x^*)\bigr]
			&\le c^N \cdot c\Delta_0
			+ c^N \cdot (1-c)(\beta+\mu) V(x_0,x^*) + c^{N/2}\cdot 
			\frac{M^2+\sigma^2}{\nu (\beta+\mu)} (1+\sqrt{c}). \label{combined}
		\end{align}
		Define
		$
		A = \Delta_0 + (\beta+\mu)(1-c) V(x_0,x^*) + \frac{2(M^2+\sigma^2)}{\nu (\beta+\mu)}.
		$
		Since $c<1$, we obtain from \eqref{combined} the simplified bound
		$
		\mathbb{E}\bigl[\Psi(\bar{x}_N) - \Psi(x^*)\bigr] \le c^{N/2}\cdot A.
		$
		Now choose
		\[
		N = \max\left\{1,\left\lceil\frac{2\log \max\{\frac{A}{\epsilon},1\}}{\log\!\bigl(\frac{1}{c}\bigr)}\right\rceil\right\}
		= \max\left\{1,\left\lceil\frac{2\log \max\{\frac{A}{\epsilon},1\}}{\log\!\bigl(1+\sqrt{\frac{\mu\nu}{L}}\bigr)}\right\rceil\right\},
		\]
		which is of order 
		$\mathcal{O}\!\left(1+\sqrt{\frac{L}{\nu\mu}}\log\max\!\bigl\{1,\frac{A}{\epsilon}\bigr\}\right)$.
		With this $N$, we have $\mathbb{E}[\Psi(\bar{x}_N) - \Psi(x^*)] \le \epsilon$.
		Moreover, the total number of inner iterations can be bounded as follows:
		\begin{align*}
			\sum_{k=1}^N T_k
			&\le \sum_{k=1}^N\Bigl[
			\frac{1}{c^{k/2}}\cdot\frac{(\beta+\mu)(1-c)}{c(\beta+\mu)-\beta} + 1
			\Bigr] \le \frac{(\beta+\mu)(1-c)}{c(\beta+\mu)-\beta}\,
			\sum_{k=1}^N \frac{1}{c^{k/2}} \;+\; N \\
			&\le \frac{(\beta+\mu)(1-c)}{c(\beta+\mu)-\beta}\,
			\frac{1}{c^{N/2}(1-\sqrt{c})} \;+\; N = \mathcal{O}\!\left(
			\frac{AL}{\nu\mu\epsilon} + 
			\sqrt{\frac{L}{\nu\mu}}\log\max\!\Bigl\{1,\frac{A}{\epsilon}\Bigr\}
			\right).
		\end{align*}
		This completes the proof.
		\Halmos
	\end{proof}
	\begin{remark}
		When exact subgradients of $h$ are available (i.e., $\sigma = 0$), Theorem~\ref{thm2} shows that an $\epsilon$-solution can be found using at most
		$\mathcal{O}\!\left(1+\sqrt{\frac{L}{\mu}}\,\log\max\!\Bigl\{1,\frac{A}{\epsilon}\Bigr\}\right)$
		evaluations of $\nabla f$, and at most
		$\mathcal{O}\!\left(\frac{AL}{\mu\epsilon}+\sqrt{\frac{L}{\mu}}\,\log\max\!\Bigl\{1,\frac{A}{\epsilon}\Bigr\}\right)$
		evaluations of $h'$.
	\end{remark}

\begin{remark}\label{remark:2}
	Suppose that $f$ is $L$-smooth and $\mu$-strongly convex with respect to the Euclidean norm, and that $\chi$ is a relatively simple convex function. Fix any $x_c\in X$ and define
	\[
	\widetilde f(x):=f(x)-\mu V(x_c,x),
	\qquad
	\widetilde\chi(x):=\chi(x)+\mu V(x_c,x).
	\]
	Since the two additional terms cancel, problem~\eqref{prob} admits the equivalent representation
	\[
	\min_{x\in X}
	\bigl\{\widetilde f(x)+h(x)+\widetilde\chi(x)\bigr\}.
	\]
	This reformulation transfers the strong convexity of $f$ to the relatively simple term $\widetilde\chi$, while preserving the original objective function.
	
	Indeed, for any $x,z\in X$, we have
	\begin{align*}
		&\widetilde f(z)-\widetilde f(x)
		-\langle\nabla\widetilde f(x),z-x\rangle =
		f(z)-f(x)-\langle\nabla f(x),z-x\rangle
		-\mu V(x,z).
	\end{align*}
	The smoothness and strong convexity of $f$, together with the bounds on the Bregman distance, yield
	\[
	\frac{\mu}{2}\|z-x\|^2
	\le
	f(z)-f(x)-\langle\nabla f(x),z-x\rangle
	\le
	\frac{L}{2}\|z-x\|^2
	\]
	and
	\[
	\frac{\nu}{2}\|z-x\|^2
	\le
	V(x,z)
	\le
	\frac{1}{2}\|z-x\|^2.
	\]
	Consequently,
	\[
	0\le
	\widetilde f(z)-\widetilde f(x)
	-\langle\nabla\widetilde f(x),z-x\rangle
	\le
	\frac{L-\mu\nu}{2}\|z-x\|^2.
	\]
	Therefore, $\widetilde f$ is convex and $(L-\mu\nu)$-smooth.
	
	Moreover, for any $\chi'(x)\in\partial\chi(x)$, define
$\widetilde\chi'(x):=\chi'(x)+\mu\bigl(\nabla\omega(x)-\nabla\omega(x_c)\bigr).$
	By the convexity of $\chi$ and the Bregman three-point identity,
	\begin{align*}
		\widetilde\chi(z)-\widetilde\chi(x)
		-\langle\widetilde\chi'(x),z-x\rangle & =
		\chi(z)-\chi(x)-\langle\chi'(x),z-x\rangle
		+\mu V(x,z) \\
		& \geq
		\mu V(x,z)
		\geq
		\frac{\mu\nu}{2}\|z-x\|^2.
	\end{align*}
	Thus, $\widetilde\chi$ is $\widetilde\mu$-strongly convex, where
	$\widetilde\mu:=\mu\nu$. Since the additional Bregman term is included in the proximal component, $\widetilde\chi$ retains the relatively simple structure required by the proposed methods.
	
	This observation shows that the assumption that strong convexity is carried by $\chi$ is not restrictive. In particular, whenever strong convexity is originally present in the smooth component $f$, it can be incorporated into the proximal term through the above equivalent reformulation, without changing the optimization problem. Hence, Theorem~\ref{thm2} applies with the smoothness and strong-convexity parameters replaced by $L-\mu\nu$ and $\mu\nu$, respectively. This equivalence also enables the proposed restart-free methods to cover problem classes in which strong convexity is carried by $f$, including the setting considered in \cite{Lan2022}, 
	while 
	avoiding explicit restart or multistage mechanisms.
\end{remark}

	\section{Nonsmooth Problems with Computable Smoothing Families}\label{sec:3}
	
	In this section, we study the setting in which directly computing a subgradient of $h$ is difficult, while a suitable smooth approximation of $h$ and its first-order information are available. This setting is relevant to a broad class of max-form nonsmooth functions, for which smoothing enables the use of gradient-based techniques. We impose the following assumption on the smoothing family.
	
	\begin{assumption}\label{ass:smoothing}
		The convex function $h:X\to\mathbb R$ admits a family of continuously differentiable convex functions $\{h_\eta\}_{\eta>0}$. There exist constants $B_h,C_h>0$ such that, for all $x,z\in X$ and $0<\eta'\le\eta$,
		\begin{align}
			h_\eta(x)&\le h(x)\le h_\eta(x)+B_h\eta,
			\qquad
			\|\nabla h_\eta(x)-\nabla h_\eta(z)\|_*
			\le\frac{C_h}{\eta}\|x-z\|,\label{rem:smoothing-family}\\
			0&\le h_{\eta'}(x)-h_\eta(x)
			\le B_h(\eta-\eta').\label{rem:smoothing-cross}
		\end{align}
		For every $\eta>0$, the value $h_\eta(x)$ can be computed, and first-order information for $h_\eta$ is available either exactly or through a stochastic first-order oracle.
	\end{assumption}
	
	Set $L_\eta:=C_h/\eta$. The second inequality in \eqref{rem:smoothing-family} shows that $L_\eta$ is a Lipschitz constant of $\nabla h_\eta$. For a fixed smoothing parameter $\eta>0$, consider the smooth composite approximation
	\begin{align}\label{prob:smooth}
		\min_{x\in X}\;\phi_\eta(x):=f(x)+h_\eta(x)+\chi(x).
	\end{align}
	By the first inequality in \eqref{rem:smoothing-family}, the approximate objective $\phi_\eta$ provides a controlled approximation to the original objective $\Psi$:
	\begin{align}\label{smooth:transfer}
		\phi_\eta(x)\le\Psi(x)\le\phi_\eta(x)+B_h\eta,\qquad x\in X.
	\end{align}
	Thus, the smoothing error is explicitly controlled by the parameter $\eta$. In particular, choosing $\eta=\epsilon/(2B_h)$ implies that any $(\epsilon/2)$-solution of \eqref{prob:smooth} is an $\epsilon$-solution of the original problem~\eqref{prob}. This reduction allows us to apply an accelerated stochastic smoothed-gradient sliding method to the approximation while progressively decreasing the smoothing parameter, thereby avoiding the explicit multistage restart mechanism used in existing approaches.

	\subsection{The Restart‑Free Accelerated Stochastic Smoothed Gradient Sliding Algorithm}
	In this subsection, we propose a restart‑free accelerated stochastic smoothed gradient sliding (RF‑ASSGS) algorithm for solving problem \eqref{prob:smooth}. 
	As in Section~\ref{sec2}, we assume that first‑order information for $h_\eta$ is obtained via a stochastic oracle. At iteration $t$, given an input point $x_{k,t}\in X$, the oracle returns a vector $H_\eta(x_{k,t},\xi_{k,t})$, where $\xi_{k,t}$ is a fresh random sample. 
	
	The RF‑ASSGS algorithm proceeds in two nested loops per outer iteration. In the outer loop, we perform an accelerated proximal gradient step:
	\begin{align}
		\underline{x}_k &= (1-\gamma_k)\bar{x}_{k-1} + \gamma_k x_{k-1}, \label{up2-uxk}\\
		\tilde{x}_k &\approx \arg\min_{u\in X}\Big\{
		l_f(\underline{x}_k,u) + h_{\eta_k}(u) +\chi(u)+ \beta_k V(x_{k-1},u)
		\Big\}, \label{up2-xk}\\
		\bar{x}_k &= (1-\lambda_k)\bar{x}_{k-1} + \lambda_k \tilde{x}_k, \label{up2-bxk}
	\end{align}
	where $l_f(\underline{x}_k,u)=f(\underline{x}_k)+\langle\nabla f(\underline{x}_k),u-\underline{x}_k\rangle$ is the linearization of $f$, and $V(\cdot,\cdot)$ denotes the Bregman distance.
	
	In the inner loop, the gradient $\nabla f(\underline{x}_k)$ is reused throughout $T_k$ inner updates. The subproblem \eqref{up2-xk} is solved approximately by the following accelerated stochastic gradient steps:
	\begin{align*}
		\underline{u}_{k,t} &= (1-\lambda_k)\bar{x}_{k-1}
		+ \lambda_k(1-\alpha_{k,t})\tilde{u}_{k,t-1}
		+ \lambda_k\alpha_{k,t} u_{k,t-1},\\
		u_{k,t}&=\arg \min _{u \in X}\left\{l_f\left(\underline{x}_k, u\right)+\tilde{l}_{h_{\eta_k}}\left(\underline{u}_{k,t}, u\right)+\chi(u)+\beta_k V\left(x_{k-1}, u\right) \right.\nonumber\\
		&\qquad\qquad\left. +\left(\beta_k p_{k,t}+q_{k,t}\right) V\left(u_{k,t-1}, u\right)\right\},\\
		\tilde{u}_{k,t} &= (1-\alpha_{k,t})\tilde{u}_{k,t-1} + \alpha_{k,t} u_{k,t},
	\end{align*}
	where $\tilde{l}_{h_{\eta_k}}(\underline{u}_{k,t},u)=h_{\eta_k}(\underline{u}_{k,t})+\langle H_{\eta_k}(\underline{u}_{k,t},\xi_{k,t}),u-\underline{u}_{k,t}\rangle$. The complete RF-ASSGS algorithm is formally described in Algorithm \ref{alg2}. 
	\begin{algorithm}[t]
		\caption{Restart-free accelerated stochastic smoothed-gradient sliding (RF-ASSGS) algorithm}
		\label{alg2}
		\begin{algorithmic}[1]
			\REQUIRE $x_0\in X$, $\epsilon\in(0,8\Delta_0)$, a certified bound $\Delta_0>0$, $B_h,C_h>0$, and $\sigma\ge0$.
			\STATE Set $\bar{x}_0=x_0$ and\par\noindent
			\begin{minipage}{\linewidth}
				\begin{align*}
					\widehat L&=\max\{L,4\nu\mu\},\quad\lambda_k=\lambda = \sqrt{\frac{\nu \mu}{\widehat L}},\quad \gamma_k=\gamma =\frac{\lambda}{2}, \quad\beta_k =\beta=\frac{\widehat L\gamma}{\nu},\quad
					N=\left\lceil\frac{2\log(8\Delta_0/\epsilon)}{-\log(1-\gamma)}\right\rceil.
				\end{align*}
			\end{minipage}
			\FOR{$k=1,2,\cdots, N$}
			\STATE Set\par\noindent
			\begin{minipage}{\linewidth}
				\begin{align*}
					\eta_k&=\frac{\Delta_0}{B_h}(1-\gamma)^{k/2},\quad
					L_{\eta_k}=\frac{C_h}{\eta_k},\quad T_k^{\rm s}
					=
					\left\lceil
					\frac{8\sigma^2\log 2(1-(1-\gamma)^{N/2})}{5\widehat L\epsilon(1-\sqrt{1-\gamma})}
					(1-\gamma)^{(N-k)/2}
					\right\rceil,\nonumber\\
					T_k^{\rm g}&=\left\lceil\frac{\log(1/2)}{\log\bigl(1-\frac{1}4\sqrt{\frac{\widehat L}{\max\{\widehat L,L_{\eta_k}\}}}\bigr)}\right\rceil,\quad T_k=\max\{T_k^{\rm g},T_k^{\rm s}\},
					\quad
					\alpha_{k,t}=\alpha_k=1-2^{-1/T_k}.
				\end{align*}
			\end{minipage}
			\STATE Compute $\underline{x}_k = (1-\gamma_k) \bar{x}_{k-1} + \gamma_k x_{k-1}$.\\
			\STATE Set $u_{k,0}=x_{k-1}$, $\tilde{u}_{k,0} =\bar{x}_{k-1} $.
			\FOR{$t=1,2,\cdots, T_k$}
			\STATE Set $p_{k,t}=p_k = \frac{1-\alpha_k}{\alpha_k}$, $\alpha_{k,t}=\alpha_k$, $q_{k,t}=p_k\mu$ and $\Lambda_{k,t}=\left\{\begin{array}{ll}
				1, & t=1, \\
				(1-\alpha_{k,t})\Lambda_{k,t-1}, & t> 1.
			\end{array}\right.$
			\STATE Compute\par\noindent
			\begin{minipage}{\linewidth}
				\begin{align}
					\underline{u}_{k,t}&=\left(1-\lambda_k\right) \bar{x}_{k-1}+\lambda_k\left(1-\alpha_{k,t}\right) \tilde{u}_{k,t-1}+\lambda_k \alpha_{k,t} u_{k,t-1},\label{update2-uu} \\
					u_{k,t}&=\arg \min _{u \in X}\left\{l_f\left(\underline{x}_k, u\right)+\tilde{l}_{h_{\eta_k}}\left(\underline{u}_{k,t}, u\right)+\chi(u)+\beta_k V\left(x_{k-1}, u\right) \right.\nonumber\\
					&\qquad\qquad\left. +\left(\beta_k p_{k,t}+q_{k,t}\right) V\left(u_{k,t-1}, u\right)\right\},\label{update2-u}\\
					\tilde{u}_{k,t}&=\left(1-\alpha_{k,t}\right) \tilde{u}_{k,t-1}+\alpha_{k,t} u_{k,t}.\label{update2-tu}
				\end{align}
			\end{minipage}
			\ENDFOR
			\STATE 
			Set $x_k= u_{k,T_k}$, $\tilde{x}_k = \tilde{u}_{k,T_k}$.\\
			\STATE Compute $\bar{x}_k  =  (1-\lambda_k) \bar{x}_{k-1} + \lambda_k\tilde{x}_{k}$.
			\ENDFOR
		\end{algorithmic}
	\end{algorithm}
	
	\subsection{Complexity analysis}\label{sec3.1}
	In this subsection, we establish the convergence of the RF-ASSGS algorithm for solving problem \eqref{prob} under Assumption~\ref{ass:smoothing}. We begin by stating several mild assumptions required for the analysis.
	
	\begin{assumption}\label{ass3:f}
		The function $f: X \rightarrow \mathbb{R}$ is smooth and convex. It satisfies, for some $L>0$,
		\begin{align*}
			f(x) &\leq f(y) + \langle \nabla f(y), x-y \rangle + \frac{L}{2} \|x-y\|^2,
		\end{align*}
		for all $x, y \in X$.
	\end{assumption}
	
	\begin{assumption}\label{ass:h2}
		Let $\mathcal F_{k,t-1}$ denote the $\sigma$-field generated by all oracle samples drawn before $\xi_{k,t}$. The points $\underline u_{k,t}$ and $u_{k,t-1}$ are $\mathcal F_{k,t-1}$-measurable, and there exists a constant $\sigma\ge0$, uniform over the smoothing schedule, such that
		\begin{align*}
			\mathbb{E}\bigl[ H_{\eta_k}(\underline{u}_{k,t}, \xi_{k,t})\mid\mathcal F_{k,t-1} \bigr] &= \nabla h_{\eta_k}(\underline{u}_{k,t}), \\
			\mathbb{E}\bigl[ \| H_{\eta_k}(\underline{u}_{k,t}, \xi_{k,t}) - \nabla h_{\eta_k}(\underline{u}_{k,t}) \|_*^2\mid\mathcal F_{k,t-1} \bigr] &\leq \sigma^2,
		\end{align*}
		where $\xi_{k,t}$ is sampled independently of $\mathcal F_{k,t-1}$.
	\end{assumption}
	
	In the convergence analysis, we evaluate the quality of the solution obtained from the $k$-th call to the inner procedure using the following error measure, which is also employed in \cite{Lan2022}:
	\begin{align}\label{def:Q}
		\phi_k(x):=f(x)+h_{\eta_k}(x)+\chi(x),~
		Q_k(x,u):=g_k(x)-g_k(u)+h_{\eta_k}(x)-h_{\eta_k}(u),
	\end{align}
	where $g_k(x) = \langle \nabla f(\underline{x}_k), x \rangle$.
	
	\begin{lemma}\label{lem3.2}
		Suppose Assumption \ref{ass3:f} and Assumption~\ref{ass:smoothing} hold. For any $u \in X$, we have
		\begin{align}
			\phi_k\left(\bar{x}_k\right)-\phi_k(u) \leq&(1  \left.-\gamma_k\right)\left[\phi_k\left(\bar{x}_{k-1}\right)-\phi_k(u)\right]+Q_k\left(\bar{x}_k, u\right)-\left(1-\gamma_k\right) Q_k\left(\bar{x}_{k-1}, u\right)\nonumber\\
			& +\chi(\bar{x}_k)-(1-\gamma_k)\chi(\bar{x}_{k-1})-\gamma_k\chi(u)+\frac{L}{2}\left\|\bar{x}_k-\underline{x}_k\right\|^2.\label{lem3.2:1}
		\end{align}
	\end{lemma}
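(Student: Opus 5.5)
The inequality in Lemma~\ref{lem3.2} should reduce to a statement purely about $f$. All terms involving $h_{\eta_k}$ and $\chi$ should cancel exactly, and what remains should follow from the smoothness and convexity of $f$ alone. In particular, I do not expect to need the update rules of Algorithm~\ref{alg2} or the specific relation between $\underline{x}_k$ and $\bar{x}_{k-1}$.

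\emph{Step 1: cancel the $h_{\eta_k}$ and $\chi$ terms.} By \eqref{def:Q}, for any $x\in X$,
\[
\phi_k(x)-\phi_k(u)=f(x)-f(u)-g_k(x)+g_k(u)+Q_k(x,u)+\chi(x)-\chi(u).
\]
I substitute this for $x=\bar{x}_k$ on the left-hand side of \eqref{lem3.2:1} and for $x=\bar{x}_{k-1}$ inside the bracket on the right-hand side.
\begin{itemize}
\item The $Q_k$ terms cancel immediately.
\item On the left the $\chi$ contribution is $\chi(\bar{x}_k)-\chi(u)$. On the right it is $(1-\gamma_k)[\chi(\bar{x}_{k-1})-\chi(u)]+\chi(\bar{x}_k)-(1-\gamma_k)\chi(\bar{x}_{k-1})-\gamma_k\chi(u)$, which equals $\chi(\bar{x}_k)-\chi(u)$. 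So these cancel as well.
\end{itemize}
Writing $\psi(x):=f(x)-\langle\nabla f(\underline{x}_k),x\rangle$, the claim becomes
\[
\psi(\bar{x}_k)\le(1-\gamma_k)\psi(\bar{x}_{k-1})+\gamma_k\psi(u)+\frac{L}{2}\|\bar{x}_k-\underline{x}_k\|^2 .
\]
Here I used that the constant terms in $u$ combine as $-\psi(u)$ on the left and $-(1-\gamma_k)\psi(u)$ on the right.

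\emph{Step 2: apply smoothness at $\underline{x}_k$.} Assumption~\ref{ass3:f} gives
\[
f(\bar{x}_k)\le f(\underline{x}_k)+\langle\nabla f(\underline{x}_k),\bar{x}_k-\underline{x}_k\rangle+\frac L2\|\bar{x}_k-\underline{x}_k\|^2,
\]
which is exactly $\psi(\bar{x}_k)\le\psi(\underline{x}_k)+\frac L2\|\bar{x}_k-\underline{x}_k\|^2$.

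\emph{Step 3: apply convexity.} By convexity of $f$, $f(y)\ge f(\underline{x}_k)+\langle\nabla f(\underline{x}_k),y-\underline{x}_k\rangle$, i.e.\ $\psi(\underline{x}_k)\le\psi(y)$ for every $y\in X$. Taking the convex combination of this with weight $1-\gamma_k$ at $y=\bar{x}_{k-1}$ and weight $\gamma_k$ at $y=u$ gives $\psi(\underline{x}_k)\le(1-\gamma_k)\psi(\bar{x}_{k-1})+\gamma_k\psi(u)$. Combined with Step~2, this yields the reduced inequality and hence \eqref{lem3.2:1}.

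The only step that needs care is the bookkeeping in Step~1: checking that the linear terms $g_k$ and the $\chi$ terms regroup exactly with the weights $1-\gamma_k$ and $\gamma_k$. After that, Steps~2 and~3 are the standard one-line smoothness and convexity bounds.
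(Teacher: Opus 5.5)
Your proposal is correct and is essentially the paper's argument: smoothness of $f$ at $\underline{x}_k$ bounds $f(\bar{x}_k)$, convexity bounds $f(\bar{x}_{k-1})$ and $f(u)$ from below by their linearizations at $\underline{x}_k$, and the $g_k$, $h_{\eta_k}$ and $\chi$ terms are pure bookkeeping through $Q_k$. Your reduction to $\psi=f-g_k$ just does that bookkeeping first. Like the paper, you tacitly use $\gamma_k\in[0,1]$ and $\underline{x}_k\in X$; the latter holds because $\underline{x}_k$ is a convex combination of points of $X$, so strictly you do rely on its definition.
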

	\begin{proof}{Proof}
		By Assumption \ref{ass3:f}, definition of $\phi_k$, and the convexity of $f(\cdot)$, we have
		\begin{align*}
			& \phi_k\left(\bar{x}_k\right)-\left(1-\gamma_k\right) \phi_k\left(\bar{x}_{k-1}\right)-\gamma_k \phi_k(u) \nonumber\\
			&\leq l_f\left(\underline{x}_k, \bar{x}_k\right)+\frac{L}{2}\left\|\bar{x}_k-\underline{x}_k\right\|^2+h_{\eta_k}\left(\bar{x}_k\right)-\left(1-\gamma_k\right) l_f\left(\underline{x}_k, \bar{x}_{k-1}\right) \nonumber\\
			& \quad
			-\left(1-\gamma_k\right) h_{\eta_k}\left(\bar{x}_{k-1}\right)-\gamma_k l_f\left(\underline{x}_k, u\right)-\gamma_k h_{\eta_k}(u) +\chi(\bar{x}_k)-(1-\gamma_k)\chi(\bar{x}_{k-1})-\gamma_k\chi(u)\nonumber\\
			& =Q_k\left(\bar{x}_k, u\right)-\left(1-\gamma_k\right) Q_k\left(\bar{x}_{k-1}, u\right)+\frac{L}{2}\left\|\bar{x}_k-\underline{x}_k\right\|^2+\chi(\bar{x}_k)-(1-\gamma_k)\chi(\bar{x}_{k-1})-\gamma_k\chi(u).
		\end{align*}
		The proof is completed.
		\Halmos
	\end{proof}
	Next, we derive an upper bound for the quantity
	$
	Q_k(\bar{x}_k, u) - (1-\gamma_k)Q_k(\bar{x}_{k-1}, u).
	$
	
	\begin{lemma}\label{lem3.3}
		Suppose Assumptions \ref{ass3:f}, \ref{ass:chi}, and \ref{ass:smoothing} hold. Consider the $k$-th call to the inner procedure in Algorithm~\ref{alg2} and let
		\begin{align}\label{par3:1}
			\Lambda_{k,t} = \begin{cases}
				1, & t=1, \\
				(1-\alpha_{k,t})\Lambda_{k,t-1}, & t>1,
			\end{cases}
		\end{align}
		If the parameters satisfy 
		\begin{align}
			\gamma_k&<\lambda_k \le 1,\quad
			\Lambda_{k,T_k}(1-\alpha_{k,1})=1-\frac{\gamma_k}{\lambda_k},\quad a_{k,t}=\beta_kp_{k,t}+q_{k,t},\quad
			r_{k,t}=a_{k,t}-\frac{\lambda_kL_{\eta_k}\alpha_{k,t}}{\nu}>0.\label{par3:3}
		\end{align}
		then, for all $u \in X$,
		\begin{align}\label{lem3.3:1}
			&Q_k(\bar{x}_k, u) - (1-\gamma_k)Q_k(\bar{x}_{k-1}, u)+\chi(\bar{x}_k)-(1-\gamma_k)\chi(\bar{x}_{k-1})-\gamma_k\chi(u)\nonumber\\
			\le &\Lambda_{k,T_k} \sum_{t=1}^{T_k} \frac{Y_{k,t}(u)}{\Lambda_{k,t}}
			+\Lambda_{k,T_k}\sum_{t=1}^{T_k}\frac{\lambda_k\alpha_{k,t}}{\Lambda_{k,t}}
			\left[\langle\delta_{k,t},u-u_{k,t-1}\rangle+
			\frac{\|\delta_{k,t}\|_*^2}{2\nu r_{k,t}}\right],
		\end{align}
		where $\delta_{k,t}:=H_{\eta_k}(\underline u_{k,t},\xi_{k,t})-\nabla h_{\eta_k}(\underline u_{k,t})$ and
		\begin{align*}
			Y_{k,t}(u) :=& \lambda_k\beta_k\alpha_{k,t}
			[V(x_{k-1},u)-V(x_{k-1},u_{k,t})]\\
			&+\lambda_k\alpha_{k,t}
			[a_{k,t}V(u_{k,t-1},u)-(\beta_k+a_{k,t}+\mu)V(u_{k,t},u)].
		\end{align*}
	\end{lemma}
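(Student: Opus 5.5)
The approach is the AGS inner-loop analysis of \cite{Lan2022}, adapted to a strongly convex $\chi$ and stochastic gradients of $h_{\eta_k}$. The key is to represent $\bar x_k$ through an auxiliary inner sequence. Define $\bar u_{k,t}:=(1-\lambda_k)\bar x_{k-1}+\lambda_k\tilde u_{k,t}$, so that $\bar u_{k,T_k}=\bar x_k$ and $\bar u_{k,0}=\bar x_{k-1}$ (since $\tilde u_{k,0}=\bar x_{k-1}$). By \eqref{update2-uu} and \eqref{update2-tu},
\[
\bar u_{k,t}-\underline u_{k,t}=\lambda_k\alpha_{k,t}(u_{k,t}-u_{k,t-1}),
\qquad
\bar u_{k,t}=(1-\alpha_{k,t})\bar u_{k,t-1}+\alpha_{k,t}\bigl[(1-\lambda_k)\bar x_{k-1}+\lambda_k u_{k,t}\bigr].
\]

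\textbf{Step 1: one-step inequality.} Fix $t$. For $h_{\eta_k}$, I will use its $L_{\eta_k}$-smoothness at $\underline u_{k,t}$, evaluated at $\bar u_{k,t}$. For $g_k$, I will use linearity. For $\chi$, I will use convexity to bound $\chi(\bar u_{k,t})$ by the convex combination of $\chi(\bar u_{k,t-1})$, $\chi(\bar x_{k-1})$ and $\chi(u_{k,t})$. Together with convexity of $h_{\eta_k}$, this gives
\[
Q_k(\bar u_{k,t},u)+\chi(\bar u_{k,t})-(1-\alpha_{k,t})[\cdots]_{t-1}
\le \alpha_{k,t}(1-\lambda_k)[\cdots]_{\bar x_{k-1}}
+\lambda_k\alpha_{k,t}\bigl[\langle\nabla f(\underline x_k)+\nabla h_{\eta_k}(\underline u_{k,t}),u_{k,t}-u\rangle+\chi(u_{k,t})-\chi(u)\bigr]
+\frac{L_{\eta_k}\lambda_k^2\alpha_{k,t}^2}{2}\|u_{k,t}-u_{k,t-1}\|^2 .
\]
Here $[\cdots]_{s}$ denotes $Q_k(\cdot,u)+\chi(\cdot)-\chi(u)$ evaluated at the indicated point. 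I then apply Lemma~\ref{lemma:1.1} to \eqref{update2-u}, with $q$ the linear terms plus $\chi$ ($\mu$-strong convexity), $\mu_1=\beta_k$ and $\mu_2=a_{k,t}$. Replacing $H_{\eta_k}$ by $\nabla h_{\eta_k}+\delta_{k,t}$, this bounds the bracketed inner-product term by
\[
\beta_k[V(x_{k-1},u)-V(x_{k-1},u_{k,t})]+a_{k,t}V(u_{k,t-1},u)-(\beta_k+a_{k,t}+\mu)V(u_{k,t},u)-a_{k,t}V(u_{k,t-1},u_{k,t})+\langle\delta_{k,t},u-u_{k,t}\rangle .
\]
To get the coefficient $\mu$ on $V(u_{k,t},u)$ I use $\frac{\mu}{2}\|u-u_{k,t}\|^2\ge\mu V(u_{k,t},u)$, as in \eqref{eq3}. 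Multiplying by $\lambda_k\alpha_{k,t}$ produces exactly $Y_{k,t}(u)$.

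\textbf{Step 2: absorb errors.} Split $\langle\delta_{k,t},u-u_{k,t}\rangle=\langle\delta_{k,t},u-u_{k,t-1}\rangle+\langle\delta_{k,t},u_{k,t-1}-u_{k,t}\rangle$. Use $V(u_{k,t-1},u_{k,t})\ge\frac\nu2\|\cdot\|^2$ to write the remaining quadratic as $-\frac{\lambda_k\alpha_{k,t}\nu}{2}\bigl(a_{k,t}-\frac{\lambda_kL_{\eta_k}\alpha_{k,t}}{\nu}\bigr)\|u_{k,t}-u_{k,t-1}\|^2=-\frac{\lambda_k\alpha_{k,t}\nu r_{k,t}}{2}\|\cdot\|^2$. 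Young's inequality with $r_{k,t}>0$ then yields the term $\lambda_k\alpha_{k,t}\|\delta_{k,t}\|_*^2/(2\nu r_{k,t})$.

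\textbf{Step 3: telescoping.} The per-step inequality has the form $E_t\le(1-\alpha_{k,t})E_{t-1}+\alpha_{k,t}(1-\lambda_k)E_{\bar x}+(\text{RHS})_t$, where $E_t=[\cdots]_{\bar u_{k,t}}$ and $E_0=E_{\bar x}=[\cdots]_{\bar x_{k-1}}$. Dividing by $\Lambda_{k,t}$ and summing (Lemma~\ref{lem2}-style, with $\Lambda_{k,1}=1$), the coefficient of $E_{\bar x}$ becomes $\Lambda_{k,T_k}(1-\alpha_{k,1})+(1-\lambda_k)(1-\Lambda_{k,T_k}(1-\alpha_{k,1}))$. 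By \eqref{par3:3} this equals $1-\frac{\gamma_k}{\lambda_k}+(1-\lambda_k)\frac{\gamma_k}{\lambda_k}=1-\gamma_k$, which gives the left side of \eqref{lem3.3:1}.

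\textbf{Main obstacle.} The main difficulty is the bookkeeping in the $\chi$ convex-combination step and in this telescoping identity. I must check that the terms $-\gamma_k\chi(u)$ and $Q_k(\cdot,u)$ come out with total weight $1-\gamma_k$ on $\bar x_{k-1}$ and weight $\gamma_k$ on $u$, using $\sum_t\Lambda_{k,T_k}\alpha_{k,t}\lambda_k/\Lambda_{k,t}=\lambda_k(1-\Lambda_{k,T_k}(1-\alpha_{k,1}))=\gamma_k$.
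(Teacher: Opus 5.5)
Your proof is correct, but it organizes the bookkeeping differently from the paper. The paper follows the AGS analysis of Lan et al.\ by shifting the reference point to $v_k=(1-\lambda_k)\bar x_{k-1}+\lambda_k u$. One use of convexity of $h_{\eta_k}$ gives $Q_k(\bar x_k,u)-(1-\gamma_k)Q_k(\bar x_{k-1},u)\le Q_k(\bar u_{k,T_k},v_k)-(1-\gamma_k/\lambda_k)Q_k(\bar u_{k,0},v_k)$. After that, the inner loop is the standard two-term recursion $\bar u_{k,t}-(1-\alpha_{k,t})\bar u_{k,t-1}-\alpha_{k,t}v_k=\lambda_k\alpha_{k,t}(u_{k,t}-u)$, and the condition $\Lambda_{k,T_k}(1-\alpha_{k,1})=1-\gamma_k/\lambda_k$ is matched directly against the telescoped initial term. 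Since $\chi$ is never linearized, the paper does not push it through this shift. Instead it unrolls $\tilde u_{k,m}$ by induction to write $\bar x_k$ as an explicit convex combination of $\bar x_{k-1}$ and the $u_{k,t}$ (see \eqref{lem3.3:barxplus}--\eqref{lem3.3:n1}), and applies convexity of $\chi$ once, globally.

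You keep $u$ as the reference point and use the three-point convex combination $\bar u_{k,t}=(1-\alpha_{k,t})\bar u_{k,t-1}+\alpha_{k,t}(1-\lambda_k)\bar x_{k-1}+\alpha_{k,t}\lambda_k u_{k,t}$ for $g_k$, $h_{\eta_k}$ and $\chi$ at every step. The parameter condition then enters only when you collect the coefficient $\Lambda_{k,T_k}(1-\alpha_{k,1})+(1-\lambda_k)\bigl(1-\Lambda_{k,T_k}(1-\alpha_{k,1})\bigr)=1-\gamma_k$ of $E_{\bar x}$. This gives a single potential, with no auxiliary point $v_k$ and no separate induction for $\chi$. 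The per-step inequality, the use of Lemma~\ref{lemma:1.1} with $\frac{\mu}{2}\|u-u_{k,t}\|^2\ge\mu V(u_{k,t},u)$, and the Young step with $r_{k,t}$ all check out.

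What the paper's route buys in exchange is the explicit weights $s_{k,t}$ in \eqref{lem3.3:barxplus}. These are reused in Lemma~\ref{lem3.4} to produce the $-\frac{\nu\beta_k}{2\gamma_k}\|\bar x_k-\underline x_k\|^2$ term, so with your route that representation would still have to be derived there (it is a one-line unrolling).

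One small slip: the left-hand side of your Step~1 display should read $[\cdots]_{\bar u_{k,t}}-(1-\alpha_{k,t})[\cdots]_{t-1}$, i.e., it is missing $-\chi(u)$. As written it is off by $\chi(u)$, although your Step~3 definition $E_t=[\cdots]_{\bar u_{k,t}}$ uses the correct form.
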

	\begin{proof}{Proof}
		By the definition of $Q_k$ in \eqref{def:Q} and the linearity of $g_k(\cdot)$,
		\begin{align}
			& Q_k(\bar{x}_k,u) - (1-\gamma_k)Q_k(\bar{x}_{k-1},u) \nonumber \\
			&= g_k\bigl(\bar{x}_k-\bar{x}_{k-1}+\gamma_k(\bar{x}_{k-1}-u)\bigr) + h_{\eta_k}(\bar{x}_k) - h_{\eta_k}(\bar{x}_{k-1})+ \gamma_k\bigl(h_{\eta_k}(\bar{x}_{k-1})-h_{\eta_k}(u)\bigr). \label{lem3.3:2}
		\end{align}
		Define
		\begin{align}\label{lem3.3:3}
			v_k = (1-\lambda_k)\bar{x}_{k-1} + \lambda_k u, \qquad
			\bar{u}_{k,t} = (1-\lambda_k)\bar{x}_{k-1} + \lambda_k\tilde{u}_{k,t} .
		\end{align}
		Then
		\begin{align}\label{lem3.3:4}
			\gamma_k(\bar{x}_{k-1}-u) = \frac{\gamma_k}{\lambda_k}(\bar{x}_{k-1}-v_k).
		\end{align}
		From the convexity of $h_{\eta_k}$ and \eqref{lem3.3:3} we obtain
		$\frac{\gamma_k}{\lambda_k}\bigl[h_{\eta_k}(v_k) - (1-\lambda_k)h_{\eta_k}(\bar{x}_{k-1}) - \lambda_k h_{\eta_k}(u)\bigr] \le 0$,
		or equivalently,
		\begin{align}\label{lem3.3:4.5}
			\gamma_k\bigl(h_{\eta_k}(\bar{x}_{k-1})-h_{\eta_k}(u)\bigr)
			\le \frac{\gamma_k}{\lambda_k}\bigl(h_{\eta_k}(\bar{x}_{k-1})-h_{\eta_k}(v_k)\bigr).
		\end{align}
		Substituting \eqref{lem3.3:4} and \eqref{lem3.3:4.5} into \eqref{lem3.3:2} and using the definition of $Q_k$ yields
		\begin{align}
			& Q_k\left(\bar{x}_k, u\right)-(1-\gamma_k) Q_k(\bar{x}_{k-1}, u) \nonumber\\
			& \leq g_k\left(\bar{x}_k-\bar{x}_{k-1}+\frac{\gamma_k}{\lambda_k}(\bar{x}_{k-1}-v_k)\right)+h_{\eta_k}\left(\bar{x}_k\right)-h_{\eta_k}(\bar{x}_{k-1})
			+\frac{\gamma_k}{\lambda_k}(h_{\eta_k}(\bar{x}_{k-1})-h_{\eta_k}(v_k))\nonumber \\
			&=Q_k\left(\bar{x}_k, v_k\right)-\left(1-\frac{\gamma_k}{\lambda_k}\right) Q_k(\bar{x}_{k-1}, v_k).\label{lem3.3:6}
		\end{align}
		Since $\tilde{u}_{k,0}=\bar{x}_{k-1}$ and $\tilde{x}_k=\tilde{u}_{k,T_k}$, we have $\bar{x}_k=\bar{u}_{k,T_k}$ and $\bar{u}_{k,0}=\bar{x}_{k-1}$ by \eqref{up2-bxk} and \eqref{lem3.3:3}. Therefore,
		\begin{align}\label{lem3.3:7}
			Q_k(\bar{x}_k,u) - (1-\gamma_k)Q_k(\bar{x}_{k-1},u)
			\le Q_k(\bar{u}_{k,T_k},v_k) - \Bigl(1-\frac{\gamma_k}{\lambda_k}\Bigr)Q_k(\bar{u}_{k,0},v_k).
		\end{align}
		From the definitions of $\underline{u}_{k,t}$, $\tilde{u}_{k,t}$ and $v_k$,
		\begin{align}
			\bar{u}_{k,t} & -\left(1-\alpha_{k,t}\right) \bar{u}_{k,t-1}-\alpha_{k,t} v_k
			=\left(\bar{u}_{k,t}-\bar{u}_{k,t-1}\right)+\alpha_{k,t}\left(\bar{u}_{k,t-1}-v_k\right) \nonumber\\
			& =\lambda_k\left(\tilde{u}_{k,t}-\tilde{u}_{k,t-1}\right)+\lambda_k \alpha_{k,t}\left(\tilde{u}_{k,t-1}-u\right)
			=\lambda_k \alpha_{k,t}\left(u_{k,t}-u\right),\label{lem3.3:8}
		\end{align}
		and
		\begin{align}
			\bar{u}_{k,t}-\underline{u}_{k,t}=\lambda_k\left(\tilde{u}_{k,t}-\left(1-\alpha_{k,t}\right) \tilde{u}_{k,t-1}\right)-\lambda_k \alpha_{k,t} u_{k,t-1}=\lambda_k \alpha_{k,t}\left(u_{k,t}-u_{k,t-1}\right) .\label{lem3.3:9}
		\end{align}
		Using the definition of $Q_k$, the convexity of $h_{\eta_k}$, and the smoothness of $h_{\eta_k}$,
		\begin{align}
			& Q_k\left(\bar{u}_{k,t}, v_k\right)-\left(1-\alpha_{k,t}\right) Q_k\left(\bar{u}_{k,t-1}, v_k\right) \nonumber\\
			& \leq \lambda_k \alpha_{k,t}\left(g_k\left(u_{k,t}\right)-g_k(u)\right)+l_{h_{\eta_k}}\left(\underline{u}_{k,t}, \bar{u}_{k,t}\right)+\frac{L_{\eta_k}}{2}\left\|\bar{u}_{k,t}-\underline{u}_{k,t}\right\|^2 \nonumber\\
			& \quad-\left(1-\alpha_{k,t}\right) l_{h_{\eta_k}}\left(\underline{u}_{k,t}, \bar{u}_{k,t-1}\right)-\alpha_{k,t} l_{h_{\eta_k}}\left(\underline{u}_{k,t}, v_k\right)\nonumber\\
			&=\lambda_k \alpha_{k,t}\left(g_k\left(u_{k,t}\right)-g_k(u)\right)+\left\langle\nabla h_{\eta_k}\left(\underline{u}_{k,t}\right),
			\bar{u}_{k,t}-\left(1-\alpha_{k,t}\right)\bar{u}_{k,t-1}-\alpha_{k,t}v_k\right\rangle\nonumber\\
			&\quad
			+\frac{L_{\eta_k}}{2}\left\|\bar{u}_{k,t}-\underline{u}_{k,t}\right\|^2.\label{lem3.3:10}
		\end{align}
		where $l_{h_{\eta_k}}\left(\underline{u}_{k,t}, u\right)=h_{\eta_k}(\underline{u}_{k,t})+\langle \nabla h_{\eta_k}\left(\underline{u}_{k,t}\right),u-\underline{u}_{k,t}\rangle$. Combining \eqref{lem3.3:8}-\eqref{lem3.3:10} gives
		\begin{align}
			& Q_k\left(\bar{u}_{k,t}, v_k\right)-\left(1-\alpha_{k,t}\right) Q_k\left(\bar{u}_{k,t-1}, v_k\right) \nonumber\\
			& \quad\le\lambda_k \alpha_{k,t}\left[g_k\left(u_{k,t}\right)-g_k(u)
			+l_{h_{\eta_k}}\left(\underline{u}_{k,t}, u_{k,t}\right)
			-l_{h_{\eta_k}}\left(\underline{u}_{k,t}, u\right)\right]+\frac{L_{\eta_k}\lambda_k^2\alpha_{k,t}^2}{2}
			\left\|u_{k,t}-u_{k,t-1}\right\|^2.\label{lem3.3:11}
		\end{align}
		The averaging recursion also yields
		\begin{align}\label{lem3.3:n1}
			\chi(\bar x_k)-(1-\gamma_k)\chi(\bar x_{k-1})-\gamma_k\chi(u)
			\le\lambda_k\Lambda_{k,T_k}\sum_{t=1}^{T_k}\frac{\alpha_{k,t}}{\Lambda_{k,t}}[\chi(u_{k,t})-\chi(u)].
		\end{align}
		We give the proof of \eqref{lem3.3:n1} in detail. By the initialization and output rules of the inner procedure, $\tilde u_{k,0}=\bar x_{k-1}$ and $\tilde x_k=\tilde u_{k,T_k}$. We first claim that, for every $m=1,\ldots,T_k$,
		\begin{align}
			\tilde u_{k,m}
			&=\Lambda_{k,m}(1-\alpha_{k,1})\bar x_{k-1}
			+\Lambda_{k,m}\sum_{t=1}^{m}\frac{\alpha_{k,t}}{\Lambda_{k,t}}u_{k,t}.\label{lem3.3:unroll}
		\end{align}
		For $m=1$, since $\Lambda_{k,1}=1$, the right-hand side of \eqref{lem3.3:unroll} is $(1-\alpha_{k,1})\bar x_{k-1}+\alpha_{k,1}u_{k,1}=\tilde u_{k,1}$. Suppose \eqref{lem3.3:unroll} holds with $m-1$ in place of $m$. Using \eqref{update2-tu} and $\Lambda_{k,m}=(1-\alpha_{k,m})\Lambda_{k,m-1}$ gives
		\begin{align*}
			\tilde u_{k,m}
			&=(1-\alpha_{k,m})\left[\Lambda_{k,m-1}(1-\alpha_{k,1})\bar x_{k-1}
			+\Lambda_{k,m-1}\sum_{t=1}^{m-1}\frac{\alpha_{k,t}}{\Lambda_{k,t}}u_{k,t}\right]
			+\alpha_{k,m}u_{k,m}\\
			&=\Lambda_{k,m}(1-\alpha_{k,1})\bar x_{k-1}
			+\Lambda_{k,m}\sum_{t=1}^{m}\frac{\alpha_{k,t}}{\Lambda_{k,t}}u_{k,t}.
		\end{align*}
		This completes the induction. Taking $m=T_k$ in \eqref{lem3.3:unroll} and substituting $\tilde x_k=\tilde u_{k,T_k}$ into \eqref{up2-bxk}, we obtain
		\begin{align}
			\bar x_k
			&=[1-\lambda_k+\lambda_k\Lambda_{k,T_k}(1-\alpha_{k,1})]\bar x_{k-1}
			+\lambda_k\Lambda_{k,T_k}\sum_{t=1}^{T_k}\frac{\alpha_{k,t}}{\Lambda_{k,t}}u_{k,t}.\label{lem3.3:barxplus-first}
		\end{align}
		The parameter in \eqref{par3:3} implies
		\begin{align*}
			1-\lambda_k+\lambda_k\Lambda_{k,T_k}(1-\alpha_{k,1})
			=1-\lambda_k+\lambda_k\left(1-\frac{\gamma_k}{\lambda_k}\right)
			=1-\gamma_k.
		\end{align*}
		Consequently, \eqref{lem3.3:barxplus-first} becomes
		\begin{align}
			\bar{x}_k
			&=(1-\gamma_k)\bar{x}_{k-1}
			+\gamma_k\sum_{t=1}^{T_k}
			\frac{\lambda_k\Lambda_{k,T_k}\alpha_{k,t}}{\gamma_k\Lambda_{k,t}}u_{k,t}.\label{lem3.3:barxplus}
		\end{align}
		From \eqref{par3:1},
		\begin{align*}
			\frac{\Lambda_{k,T_k}}{\Lambda_{k,t}}
			=\prod_{j=t+1}^{T_k}(1-\alpha_{k,j}),
			\qquad
			\Lambda_{k,T_k}(1-\alpha_{k,1})=\prod_{j=1}^{T_k}(1-\alpha_{k,j}),
		\end{align*}
		Therefore
		\begin{align}
			\Lambda_{k,T_k}\sum_{t=1}^{T_k}\frac{\alpha_{k,t}}{\Lambda_{k,t}}
			&=\sum_{t=1}^{T_k}\alpha_{k,t}\prod_{j=t+1}^{T_k}(1-\alpha_{k,j})=1-\prod_{j=1}^{T_k}(1-\alpha_{k,j})
			=1-\Lambda_{k,T_k}(1-\alpha_{k,1})
			=\frac{\gamma_k}{\lambda_k}.\label{lem3.3:weight-sum}
		\end{align}
		Define
		\[
		s_{k,t}:=\frac{\lambda_k\Lambda_{k,T_k}\alpha_{k,t}}{\gamma_k\Lambda_{k,t}},
		\qquad t=1,\ldots,T_k.
		\]
		Since $0<\alpha_{k,t}<1$ and $0<\gamma_k<\lambda_k$, each $s_{k,t}$ is nonnegative, and \eqref{lem3.3:weight-sum} gives $\sum_{t=1}^{T_k}s_{k,t}=1$. Hence \eqref{lem3.3:barxplus} is a convex combination because $1-\gamma_k\ge0$ and
		$(1-\gamma_k)+\gamma_k\sum_{t=1}^{T_k}s_{k,t}=1$. Convexity of $\chi$ now yields
		\begin{align*}
			\chi(\bar x_k)
			&\le(1-\gamma_k)\chi(\bar x_{k-1})
			+\gamma_k\sum_{t=1}^{T_k}s_{k,t}\chi(u_{k,t})=(1-\gamma_k)\chi(\bar x_{k-1})
			+\lambda_k\Lambda_{k,T_k}\sum_{t=1}^{T_k}
			\frac{\alpha_{k,t}}{\Lambda_{k,t}}\chi(u_{k,t}).
		\end{align*}
		Subtracting $
		(1-\gamma_k)\chi(\bar x_{k-1})+\gamma_k\chi(u)$
		from both sides proves \eqref{lem3.3:n1}.
		Applying Lemma~\ref{lemma:1.1} to the subproblem defining $u_{k,t}$ in \eqref{update2-u} yields
		\begin{align}
			& g_k\left(u_{k,t}\right)-g_k(u)+\tilde{l}_{h_{\eta_k}}\left(\underline{u}_{k,t}, u_{k,t}\right)-\tilde{l}_{h_{\eta_k}}\left(\underline{u}_{k,t}, u\right)+\chi(u_{k,t})-\chi(u)\nonumber\\
			\leq & \beta_k\left(V(x_{k-1}, u)-V\left(u_{k,t}, u\right)-V\left(x_{k-1}, u_{k,t}\right)\right)-\frac{\mu}{2}\|u_{k,t}-u\|^2\nonumber\\
			&+a_{k,t}\left(V\left(u_{k,t-1}, u\right)-V\left(u_{k,t}, u\right)-V\left(u_{k,t-1}, u_{k,t}\right)\right) .\label{lem3.3:12}
		\end{align}
		Denote $\delta_{k,t} = H_{\eta_k}(\underline{u}_{k,t},\xi_{k,t}) - \nabla h_{\eta_k}(\underline{u}_{k,t})$. From the definition of $\tilde{l}_{h_{\eta_k}}$, 
		\begin{align}
			l_{h_{\eta_k}}\left(\underline{u}_{k,t}, u_{k,t}\right)-\tilde{l}_{h_{\eta_k}}\left(\underline{u}_{k,t}, u_{k,t}\right)&=\langle  -\delta_{k,t},u_{k,t}-\underline{u}_{k,t}\rangle,\label{lem3.3:12.2}\\
			\tilde{l}_{h_{\eta_k}}\left(\underline{u}_{k,t}, u\right)-l_{h_{\eta_k}}\left(\underline{u}_{k,t}, u\right)&=\langle  \delta_{k,t},u-\underline{u}_{k,t}\rangle.\label{lem3.3:12.3}
		\end{align}
		Because $V(u_{k,t},u)\le\|u_{k,t}-u\|^2/2$, the last term is at most $-\mu V(u_{k,t},u)$. Moreover, by the strong convexity of $\omega$ and condition \eqref{par3:3},
		\begin{align}\label{lem3.3:13}
			\frac{L_{\eta_k} \lambda_k \alpha_{k,t}}{2}\left\|u_{k,t}-u_{k,t-1}\right\|^2 \leq \frac{L_{\eta_k} \lambda_k \alpha_{k,t}}{\nu} V\left(u_{k,t-1}, u_{k,t}\right).
		\end{align}
		Combining \eqref{lem3.3:11}--\eqref{lem3.3:13} we obtain
		\begin{align}
			&Q_k\left(\bar{u}_{k,t},v_k\right)
			-\left(1-\alpha_{k,t}\right)Q_k\left(\bar{u}_{k,t-1},v_k\right)+\lambda_k\alpha_{k,t}[\chi(u_{k,t})-\chi(u)]\nonumber\\
			\leq& Y_{k,t}(u)+\lambda_k\alpha_{k,t}
			\langle\delta_{k,t},u-u_{k,t}\rangle-\lambda_k\alpha_{k,t}r_{k,t}V(u_{k,t-1},u_{k,t})\nonumber\\
			\le& Y_{k,t}(u)+\lambda_k\alpha_{k,t}\langle\delta_{k,t},u-u_{k,t-1}\rangle+\lambda_k\alpha_{k,t}\|\delta_{k,t}\|_*\|u_{k,t}-u_{k,t-1}\|
			-\frac{\lambda_k\alpha_{k,t}\nu r_{k,t}}{2}\|u_{k,t}-u_{k,t-1}\|^2\nonumber\\
			\le& Y_{k,t}(u)+\lambda_k\alpha_{k,t}\langle\delta_{k,t},u-u_{k,t-1}\rangle
			+\frac{\lambda_k\alpha_{k,t}}{2\nu r_{k,t}}\|\delta_{k,t}\|_*^2,
		\end{align}
		where
		\begin{align*}
			Y_{k,t}(u)={}&\lambda_k\beta_k\alpha_{k,t}
			[V(x_{k-1},u)-V(x_{k-1},u_{k,t})]\\
			&+\lambda_k\alpha_{k,t}
			[a_{k,t}V(u_{k,t-1},u)-(\beta_k+a_{k,t}+\mu)V(u_{k,t},u)].
		\end{align*}
		Applying Lemma~\ref{lem2} then gives
		\begin{align*}
			&Q_k(\bar{x}_k,u)-(1-\gamma_k)Q_k(\bar{x}_{k-1},u)
			+\chi(\bar{x}_k)-(1-\gamma_k)\chi(\bar{x}_{k-1})-\gamma_k\chi(u)\nonumber\\
			&\le
			Q_k(\bar u_{k,T_k},v_k)-\left(1-\frac{\gamma_k}{\lambda_k}\right)Q_k(\bar u_{k,0},v_k)
			+\lambda_k\Lambda_{k,T_k}\sum_{t=1}^{T_k}\frac{\alpha_{k,t}}{\Lambda_{k,t}}
			[\chi(u_{k,t})-\chi(u)]\nonumber\\
			&\le
			\Lambda_{k,T_k}\sum_{t=1}^{T_k}\frac{Y_{k,t}(u)}{\Lambda_{k,t}}
			+\Lambda_{k,T_k}\sum_{t=1}^{T_k}\frac{\lambda_k\alpha_{k,t}}{\Lambda_{k,t}}
			\left[\langle\delta_{k,t},u-u_{k,t-1}\rangle
			+\frac{\|\delta_{k,t}\|_*^2}{2\nu r_{k,t}}\right],
		\end{align*}
		which completes the proof.
		\Halmos
	\end{proof}
	\begin{lemma}\label{lem3.4}
		Suppose Assumption \ref{ass3:f} and Assumption~\ref{ass:chi} hold. Consider the $k$-th call to the inner procedure in Algorithm \ref{alg2}. If condition \eqref{par3:3} holds, and for every $t$
		\begin{align}
			\alpha_{k,t}=\alpha_k,\qquad p_{k,t}=p_k=\frac{1-\alpha_k}{\alpha_k},\qquad q_{k,t}=p_k\mu,\label{par3.4}
		\end{align}
		then we have
		\begin{align}\label{lem3.4:1}
			&Q_k(\bar{x}_k,u)-(1-\gamma_k)Q_k(\bar{x}_{k-1},u)
			+\chi(\bar{x}_k)-(1-\gamma_k)\chi(\bar{x}_{k-1})-\gamma_k\chi(u)\nonumber\\
			&\quad\le [\lambda_k\beta_k+(\lambda_k-\gamma_k)\mu]V(x_{k-1},u)-\lambda_k(\beta_k+\mu) V(x_k,u)-\frac{\nu\beta_k}{2\gamma_k}\|\bar{x}_k-\underline{x}_k\|^2\nonumber\\
			&\qquad+\Lambda_{k,T_k}\sum_{t=1}^{T_k}\frac{\lambda_k\alpha_k}{\Lambda_{k,t}}
			\left[\langle\delta_{k,t},u-u_{k,t-1}\rangle+\frac{\|\delta_{k,t}\|_*^2}{2\nu r_{k,t}}\right],
		\end{align}
	\end{lemma}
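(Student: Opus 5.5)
The plan is to start from Lemma~\ref{lem3.3}. Its noise term already coincides with the last line of \eqref{lem3.4:1}, so all the work is in evaluating $\Lambda_{k,T_k}\sum_{t=1}^{T_k} Y_{k,t}(u)/\Lambda_{k,t}$ under the constant parameters \eqref{par3.4}. With $\alpha_{k,t}=\alpha_k$ we have $\Lambda_{k,t}=(1-\alpha_k)^{t-1}$. With $p_k=(1-\alpha_k)/\alpha_k$ and $q_{k,t}=p_k\mu$ we get $a_{k,t}=p_k(\beta_k+\mu)$ and $\beta_k+a_{k,t}+\mu=(\beta_k+\mu)(1+p_k)=(\beta_k+\mu)/\alpha_k$. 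I split $Y_{k,t}(u)$ into a part involving $u_{k,t-1},u_{k,t}$ versus $u$, and a part anchored at $x_{k-1}$.

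\emph{Step 1 (telescoping part).} Under \eqref{par3.4} the second bracket of $Y_{k,t}$ equals
\[
\lambda_k(\beta_k+\mu)\bigl[(1-\alpha_k)V(u_{k,t-1},u)-V(u_{k,t},u)\bigr].
\]
Since $(1-\alpha_k)/\Lambda_{k,t}=1/\Lambda_{k,t-1}$ for $t\ge2$, the weighted sum telescopes. Using $u_{k,0}=x_{k-1}$, $u_{k,T_k}=x_k$, $\Lambda_{k,1}=1$, it equals
\[
\lambda_k(\beta_k+\mu)\bigl[\Lambda_{k,T_k}(1-\alpha_k)V(x_{k-1},u)-V(x_k,u)\bigr].
\]
The condition $\Lambda_{k,T_k}(1-\alpha_{k,1})=1-\gamma_k/\lambda_k$ from \eqref{par3:3} turns this into $(\lambda_k-\gamma_k)(\beta_k+\mu)V(x_{k-1},u)-\lambda_k(\beta_k+\mu)V(x_k,u)$.

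\emph{Step 2 (anchor part).} The first bracket contributes
\[
\lambda_k\beta_k\Lambda_{k,T_k}\sum_t \frac{\alpha_k}{\Lambda_{k,t}}\bigl[V(x_{k-1},u)-V(x_{k-1},u_{k,t})\bigr].
\]
The weight identity \eqref{lem3.3:weight-sum} shows the $V(x_{k-1},u)$ part is exactly $\gamma_k\beta_k V(x_{k-1},u)$. Adding Step 1 gives the coefficient $(\lambda_k-\gamma_k)(\beta_k+\mu)+\gamma_k\beta_k=\lambda_k\beta_k+(\lambda_k-\gamma_k)\mu$, as claimed.

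\emph{Step 3 (the negative term, main point).} The remaining piece is $-\gamma_k\beta_k\sum_t s_{k,t}V(x_{k-1},u_{k,t})$, with the convex weights $s_{k,t}$ from the proof of Lemma~\ref{lem3.3}. I would bound $V\ge\frac{\nu}{2}\|\cdot\|^2$ and then apply Jensen to the convex function $\|\cdot\|^2$:
\[
-\gamma_k\beta_k\sum_t s_{k,t}V(x_{k-1},u_{k,t})\le -\frac{\nu\gamma_k\beta_k}{2}\Bigl\|\sum_t s_{k,t}u_{k,t}-x_{k-1}\Bigr\|^2 .
\]
By \eqref{lem3.3:barxplus} and \eqref{up2-uxk},
\[
\bar x_k-\underline{x}_k=\gamma_k\Bigl(\sum_t s_{k,t}u_{k,t}-x_{k-1}\Bigr),
\]
so the bound becomes $-\frac{\nu\beta_k}{2\gamma_k}\|\bar x_k-\underline x_k\|^2$. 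This completes \eqref{lem3.4:1}.

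The only delicate points are the boundary term $t=1$ in the telescoping and correctly identifying the convex-combination weights in Step 3. The algebra itself is routine.
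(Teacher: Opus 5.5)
Your proposal is correct and follows the paper's proof essentially step for step. Like the paper, you use $\alpha_k a_{k,t}=(1-\alpha_k)(\beta_k+\mu)$ and $\alpha_k(\beta_k+a_{k,t}+\mu)=\beta_k+\mu$ to telescope the second bracket of $Y_{k,t}(u)$ into $(\lambda_k-\gamma_k)(\beta_k+\mu)V(x_{k-1},u)-\lambda_k(\beta_k+\mu)V(x_k,u)$, apply \eqref{lem3.3:weight-sum} to obtain $\gamma_k\beta_k V(x_{k-1},u)$, and bound the remaining anchor term via $V\ge\frac{\nu}{2}\|\cdot\|^2$ and Jensen with the weights $s_{k,t}$; your explicit treatment of the $t=1$ boundary term and of $\bar x_k-\underline x_k=\gamma_k(\sum_t s_{k,t}u_{k,t}-x_{k-1})$ only spells out details the paper leaves implicit.
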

	\begin{proof}{Proof}
		Under \eqref{par3.4}, $a_{k,t}=p_k(\beta_k+\mu)$, $\alpha_k a_{k,t}=(1-\alpha_k)(\beta_k+\mu)$, and $\alpha_k(\beta_k+a_{k,t}+\mu)=\beta_k+\mu$. Therefore,
		\begin{align}\label{lem3.4:3}
			&\lambda_k\Lambda_{k,T_k}\sum_{t=1}^{T_k}\frac{\alpha_k}{\Lambda_{k,t}}
			\{a_{k,t}V(u_{k,t-1},u)-(\beta_k+a_{k,t}+\mu)V(u_{k,t},u)\}\nonumber\\
			&\qquad=(\lambda_k-\gamma_k)(\beta_k+\mu)V(x_{k-1},u)-\lambda_k (\beta_k+\mu) V(x_k,u).
		\end{align}
		Also, \eqref{lem3.3:weight-sum} implies
		\begin{align}\label{lem3.4:4}
			\lambda_k\beta_k\Lambda_{k,T_k}\sum_{t=1}^{T_k}\frac{\alpha_k}{\Lambda_{k,t}}V(x_{k-1},u)=\gamma_k\beta_k V(x_{k-1},u).
		\end{align}
		With $s_{k,t}=(\lambda_k\Lambda_{k,T_k}/\gamma_k)(\alpha_k/\Lambda_{k,t})$, strong convexity of the distance-generating function and convexity of $\|\cdot\|^2$ give
		\begin{align}\label{lem3.4:5}
			\lambda_k\beta_k\Lambda_{k,T_k}\sum_{t=1}^{T_k}\frac{\alpha_k}{\Lambda_{k,t}}V(x_{k-1},u_{k,t})
			&\ge\frac{\nu\gamma_k\beta_k}{2}\left\|x_{k-1}-\sum_{t=1}^{T_k}s_{k,t}u_{k,t}\right\|^2
			=\frac{\nu\beta_k}{2\gamma_k}\|\bar x_k-\underline x_k\|^2.
		\end{align}
		Substituting \eqref{lem3.4:3} --\eqref{lem3.4:5} into Lemma~\ref{lem3.3} gives \eqref{lem3.4:1}, because
		$\gamma_k\beta_k+(\lambda_k-\gamma_k)(\beta_k+\mu)=\lambda_k\beta_k+(\lambda_k-\gamma_k)\mu$. 		
		\Halmos
	\end{proof}
	With the help of Lemma~\ref{lem3.3} and Lemma~\ref{lem3.4}, we are now ready to establish the convergence of Algorithm~\ref{alg2}.
	\begin{theorem}\label{thm3.5}
		Suppose Assumptions \ref{ass:chi}, \ref{ass:smoothing}, \ref{ass3:f}, and \ref{ass:h2} hold. Define, for every outer iteration $k$,
		\begin{align}\label{par3.51}
			&\widehat L=\max\{L,4\nu\mu\},\quad
			\lambda_k=\lambda:=\sqrt{\frac{\nu\mu}{\widehat L}},\quad\gamma_k=\gamma:=\frac{\lambda}{2},\quad
			\beta_k=\beta:=\frac{\widehat L\gamma}{\nu}.
		\end{align}
		Let $x^*$ be a solution of the original problem \eqref{prob} and let $\Delta_0>0$ satisfy
		\begin{align}\label{par3:Delta0}
			\Delta_0\ge\Psi(\bar x_0)-\Psi^*+\lambda(\beta+\mu)V(x_0,x^*).
		\end{align}
		Let $0<\epsilon<8\Delta_0$.
		Set
		\begin{align}\label{par3.5}
			&N=\left\lceil\frac{2\log(8\Delta_0/\epsilon)}{-\log(1-\gamma)}\right\rceil,
			\quad \eta_k=\frac{\Delta_0}{B_h}(1-\gamma)^{k/2},\quad
			L_{\eta_k}=\frac{C_h}{\eta_k},\nonumber\\
			&T_k^{\rm g}:=\left\lceil\frac{\log(1/2)}{\log\bigl(1-\frac{1}4\sqrt{\frac{\widehat L}{\max\{\widehat L,L_{\eta_k}\}}}\bigr)}\right\rceil,\quad T_k^{\rm s}
			=
			\left\lceil
			\frac{8\sigma^2\log 2(1-(1-\gamma)^{N/2})}{5\widehat L\epsilon(1-\sqrt{1-\gamma})}
			(1-\gamma)^{(N-k)/2}
			\right\rceil, \nonumber\\
			&T_k:=\max\{T_k^{\rm g},T_k^{\rm s}\},\quad\alpha_{k,t}=\alpha_k=1-2^{-1/T_k},\quad p_{k,t}=p_k=\frac{1-\alpha_k}{\alpha_k},\quad q_{k,t}=p_k\mu.
		\end{align}
		Then we have
		\begin{align}\label{thm3.5:1}
			\mathbb{E}\bigl[\Psi(\bar{x}_N) - \Psi(x^*)\bigr]
			\le \epsilon.
		\end{align}
		Let $N_f$ denote the number of evaluations of $\nabla f$, and let $N_H$ denote the number of stochastic-oracle evaluations. Then
		\begin{align}\label{thm3.5:complexities}
			N_f&=\mathcal O\!\left(
			\left[1+\sqrt{\frac{L}{\nu\mu}}\right]
			\left[1+\log\max\left\{\frac{\Delta_0}{\epsilon},1\right\}\right]\right),\quad
			N_H
			=\mathcal O\!\left(
			N_f+
			\sqrt{\frac{B_hC_h}{\nu\mu\epsilon}}
			+\frac{\sigma^2}{\nu\mu\epsilon}\right).
		\end{align}
		In the exact-oracle case $\sigma=0$, the number of evaluations of the smoothed gradients is
		\begin{align}\label{thm3.5:Kcomplexity}
			N_{\nabla h_\eta}
			=\mathcal O\!\left(
			N_f+\sqrt{\frac{B_hC_h}{\nu\mu\epsilon}}
			\right).
		\end{align}
	\end{theorem}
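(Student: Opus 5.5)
We combine Lemma~\ref{lem3.2} and Lemma~\ref{lem3.4} into a one-step recursion for $\phi_k$. Then we pass from $\phi_k$ to $\phi_{k+1}$ and to $\Psi$ using Assumption~\ref{ass:smoothing}. Finally we unroll the recursion, choosing the parameters so that the smoothing error and the noise each contribute at most a fixed fraction of $\epsilon$.

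\emph{Step 1: verify the hypotheses of Lemma~\ref{lem3.3}.} With $\lambda=\sqrt{\nu\mu/\widehat L}\le 1/2$ and $\gamma=\lambda/2<\lambda$, the choice $\alpha_k=1-2^{-1/T_k}$ gives $\Lambda_{k,T_k}(1-\alpha_k)=(1-\alpha_k)^{T_k}=1/2=1-\gamma/\lambda$. We also need $r_{k,t}=p_k(\beta+\mu)-\lambda L_{\eta_k}\alpha_k/\nu>0$. Since $p_k=(1-\alpha_k)/\alpha_k\approx 1/\alpha_k$, this reduces to roughly $\alpha_k^2\lesssim \nu(\beta+\mu)/(\lambda L_{\eta_k})$. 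Using $\beta+\mu\ge\mu$ and $\lambda^2=\nu\mu/\widehat L$, the right-hand side is of order $\lambda\widehat L/L_{\eta_k}$, and the definition of $T_k^{\rm g}$ is designed exactly so that $\alpha_k\le\frac14\sqrt{\widehat L/\max\{\widehat L,L_{\eta_k}\}}$. This is a routine check, and it also yields $r_{k,t}\ge c\,(\beta+\mu)/\alpha_k$ for a constant $c$ (I expect $c=1/2$).

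\emph{Step 2: one-step recursion.} In Lemma~\ref{lem3.2}, the term $\frac{L}{2}\|\bar x_k-\underline x_k\|^2$ is cancelled by $-\frac{\nu\beta}{2\gamma}\|\bar x_k-\underline x_k\|^2$ from Lemma~\ref{lem3.4}, since $\nu\beta/\gamma=\widehat L\ge L$. With $u=x^*$ this gives
\[
\phi_k(\bar x_k)-\phi_k(x^*)+\lambda(\beta+\mu)V(x_k,x^*)\le(1-\gamma)[\phi_k(\bar x_{k-1})-\phi_k(x^*)]+[\lambda\beta+(\lambda-\gamma)\mu]V(x_{k-1},x^*)+\mathcal E_k .
\]
We then check $\lambda\beta+(\lambda-\gamma)\mu\le(1-\gamma)\lambda(\beta+\mu)$. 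This is equivalent to $\gamma\lambda(\beta+\mu)\le\gamma\mu$, i.e.\ $\lambda(\beta+\mu)\le\mu$. Since $\lambda\beta=\widehat L\lambda\gamma/\nu=\mu/2$ and $\lambda\mu\le\mu/2$, the inequality holds. Hence the potential $D_k:=\phi_k(\bar x_k)-\phi_k(x^*)+\lambda(\beta+\mu)V(x_k,x^*)$ contracts by $(1-\gamma)$. To switch from $\phi_{k-1}$ to $\phi_k$ at $\bar x_{k-1}$ we use \eqref{rem:smoothing-cross}: since $\eta_k\le\eta_{k-1}$, we have $\phi_k(\bar x_{k-1})\le\phi_{k-1}(\bar x_{k-1})+B_h(\eta_{k-1}-\eta_k)$, and $-\phi_k(x^*)\le-\phi_{k-1}(x^*)$. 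The cost per step is therefore $B_h(\eta_{k-1}-\eta_k)\le B_h\eta_{k-1}\gamma$, which is of order $\Delta_0\gamma(1-\gamma)^{(k-1)/2}$. For $k=1$ we compare directly with $\Psi$ via \eqref{smooth:transfer}, giving $D_0\le\Delta_0$.

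\emph{Step 3: unroll and bound the noise.} Taking expectations kills the martingale term $\langle\delta_{k,t},x^*-u_{k,t-1}\rangle$ by Assumption~\ref{ass:h2}, exactly as in the previous theorem. The remaining noise contribution in step $k$ is
\[
\Lambda_{k,T_k}\sum_t\frac{\lambda\alpha_k\sigma^2}{2\nu r_{k,t}\Lambda_{k,t}}\lesssim\frac{\lambda\alpha_k^2\sigma^2}{\nu(\beta+\mu)}\sum_t\frac{\Lambda_{k,T_k}}{\Lambda_{k,t}}\lesssim\frac{\gamma\alpha_k\sigma^2}{\nu\mu},
\]
where the last step uses the weight-sum identity \eqref{lem3.3:weight-sum}. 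Since $\alpha_k\approx\log2/T_k$, choosing $T_k\ge T_k^{\rm s}$ makes this at most $\epsilon$ times a weight proportional to $(1-\gamma)^{-(N-k)/2}$. Unrolling then yields
\[
\mathbb E[D_N]\le(1-\gamma)^N\Delta_0+\sum_k(1-\gamma)^{N-k}\bigl[O(\Delta_0\gamma(1-\gamma)^{k/2})+\text{noise}_k\bigr].
\]
The smoothing sum is a geometric series bounded by $O(\Delta_0(1-\gamma)^{N/2})$. The noise sum collapses to about $\epsilon/8$ by the design of $T_k^{\rm s}$. With the stated $N$, $(1-\gamma)^{N/2}\le\epsilon/(8\Delta_0)$, and adding the final smoothing gap $B_h\eta_N\le\epsilon/8$ from \eqref{smooth:transfer} gives $\mathbb E[\Psi(\bar x_N)-\Psi^*]\le\epsilon$. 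The main obstacle is the bookkeeping of these constants, in particular making the factor in front of the smoothing series small enough; if it is not, one can enlarge $N$ by a constant factor.

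\emph{Step 4: complexities.} $N_f=N=O(\gamma^{-1}\log(\Delta_0/\epsilon))=O(\sqrt{\widehat L/(\nu\mu)}\log(\Delta_0/\epsilon))$. For $N_H=\sum_kT_k\le\sum_k(T_k^{\rm g}+T_k^{\rm s})$, we have $T_k^{\rm g}=O(1+\sqrt{L_{\eta_k}/\widehat L})$ with $L_{\eta_k}\propto(1-\gamma)^{-k/2}$. This geometric sum is dominated by its last term, of order $\gamma^{-1}\sqrt{C_hB_h/(\Delta_0\widehat L)}\,(1-\gamma)^{-N/2}\sim\sqrt{B_hC_h/(\nu\mu\epsilon)}$. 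Likewise $\sum_kT_k^{\rm s}=O(N+\sigma^2/(\widehat L\epsilon\gamma^2))=O(N+\sigma^2/(\nu\mu\epsilon))$. This gives \eqref{thm3.5:complexities}, and \eqref{thm3.5:Kcomplexity} follows by setting $\sigma=0$.
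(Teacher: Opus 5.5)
Your architecture is the paper's: the same potential $\phi_k(\bar x_k)-\phi_k(x^*)+\lambda(\beta+\mu)V(x_k,x^*)$, the same cancellation $\nu\beta/\gamma=\widehat L\ge L$, the same check $\lambda(\beta+\mu)\le\mu$, the same smoothing transfer, and the same unrolling. But the noise step fails as written.

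\textbf{The gap: you drop $\beta$ from $\beta+\mu$.} In Step 3 you bound the per-outer-step noise $\frac{\lambda\alpha_k\sigma^2}{2\nu(\beta+\mu)}$ by $\gamma\alpha_k\sigma^2/(\nu\mu)$, i.e.\ you replace $\beta+\mu$ by $\mu$. Since $\lambda\beta=\mu/2$, we have $\beta=\mu/(2\lambda)\ge\mu$. So you discard the dominant term and lose a factor $\Theta(1/\lambda)=\Theta(\sqrt{\widehat L/(\nu\mu)})$.

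With your estimate, the inputs $\alpha_k\le\log 2/T_k$, $T_k\ge T_k^{\rm s}$ and the weights $(1-\gamma)^{N-k}$ only certify a total noise of $\frac{5\gamma\widehat L}{8\nu\mu}\,\epsilon=\frac{5\epsilon}{16\lambda}$, not $\epsilon/8$. This is unbounded in the condition number, so the prescribed $T_k^{\rm s}$ (whose denominator carries $\widehat L$) does not control it. Your count $\sum_k T_k^{\rm s}=O(N+\sigma^2/(\nu\mu\epsilon))$ is therefore inconsistent with your own estimate.

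The fix is to keep $\beta$. From $p_k\ge 3/(4\alpha_k)$ and $L_{\eta_k}\alpha_k^2\le\widehat L/16$ you get $\lambda L_{\eta_k}\alpha_k/\nu\le\lambda\widehat L/(16\nu\alpha_k)=\beta/(8\alpha_k)$, hence $r_{k,t}\ge 5\beta/(8\alpha_k)$. Then, with $\lambda/\beta=2\nu/\widehat L$ and $\sum_t\Lambda_{k,T_k}/\Lambda_{k,t}=1/(2\alpha_k)$, the noise in outer step $k$ is at most $\frac{4\alpha_k\sigma^2}{5\widehat L}\le\frac{4\log 2\,\sigma^2}{5\widehat L T_k}$. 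The choice of $T_k^{\rm s}$ makes the unrolled sum exactly $\epsilon/2$.

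The same slip breaks your Step 1 heuristic. With $\mu$ in place of $\beta+\mu$ you would need $\alpha_k^2\lesssim\lambda\widehat L/L_{\eta_k}$, which $\alpha_k\le\frac14\sqrt{\widehat L/\max\{\widehat L,L_{\eta_k}\}}$ does not imply when $\lambda$ is small. The correct threshold is $\nu\beta/(\lambda L_{\eta_k})=\widehat L/(2L_{\eta_k})$. Your stated conclusion $r_{k,t}\ge\frac12(\beta+\mu)/\alpha_k$ is true, but only through $\beta$.

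\textbf{Smaller bookkeeping points.}
\begin{itemize}
\item $D_0\le\Delta_0$ does not feed the $k=1$ step correctly. Passing from $\Psi$ to $\phi_1$ at $(\bar x_0,x^*)$ costs $B_h\eta_1=\sqrt{1-\gamma}\,\Delta_0$, not $O(\gamma\Delta_0)$. The paper absorbs this by setting $\eta_0=\Delta_0/B_h$. The initial potential is then at most $2\Delta_0$, and the first transfer $B_h(\eta_0-\eta_1)$ joins the smoothing series, which sums exactly to $\sqrt{1-\gamma}\,\eta_NB_h(1-(1-\gamma)^{N/2})$.
\item $N$ is prescribed, so ``enlarge $N$'' is not available. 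It is also unnecessary: $2\Delta_0(1-\gamma)^N\le\epsilon/4$ and $(1+\sqrt{1-\gamma})\eta_NB_h\le 2\Delta_0(1-\gamma)^{N/2}\le\epsilon/4$. This leaves exactly the $\epsilon/2$ noise budget.
\item In Step 4, the dominant term of $\sum_k\eta_k^{-1/2}$ carries $(1-\gamma)^{-N/4}$, not $(1-\gamma)^{-N/2}$. You also need the lower bound $\eta_N>\sqrt{1-\gamma}\,\epsilon/(8B_h)$, which follows from the minimality of the ceiling defining $N$. Only then do you get $\sum_k T_k^{\rm g}=O(N+\sqrt{B_hC_h/(\nu\mu\epsilon)})$.
\end{itemize}
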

	\begin{proof}{Proof}
		We first verify \eqref{par3:3} in Lemma~\ref{lem3.3}. From \eqref{par3.51},
		\begin{align}
			0<\lambda_k=\lambda=\sqrt{\frac{\nu\mu}{\widehat L}}\le\frac12,\quad
			\gamma_k=\gamma=\frac{\lambda}{2},\quad
			\beta_k=\beta=\frac{\widehat L\lambda}{2\nu},\quad
			\lambda_k\beta_k=\lambda\beta=\frac{\mu}{2}.\label{thm3.5:parameter-basic}
		\end{align}
		Since \(T_k\ge T_k^{\rm g}\), the definition of \(T_k^{\rm g}\) in \eqref{par3.5} gives
		\begin{align}
			T_k
			&\ge
			\frac{\log(1/2)}
			{\log\!\left(1-\frac14\sqrt{\frac{\widehat L}{\max\{\widehat L,L_{\eta_k}\}}}\right)}\Longrightarrow\quad
			\alpha_k=1-2^{-1/T_k}
			\le\frac14\sqrt{\frac{\widehat L}{\max\{\widehat L,L_{\eta_k}\}}}\le\frac14.
			\label{thm3.5:2}
		\end{align}
		Moreover, \eqref{par3:1}, \eqref{par3.5}, and \(\gamma_k/\lambda_k=1/2\) imply
		\begin{align}
			\Lambda_{k,T_k}(1-\alpha_{k,1})
			&=(1-\alpha_k)^{T_k}
			=\left(2^{-1/T_k}\right)^{T_k}
			=\frac12
			=1-\frac{\gamma_k}{\lambda_k}.
			\label{thm3.5:weight-condition}
		\end{align}
		Since \(a_{k,t}=\beta_k p_{k,t}+q_{k,t}=p_k(\beta_k+\mu)\), by \eqref{thm3.5:2}, we have 
		\begin{align}
			p_k=\frac{1-\alpha_k}{\alpha_k}
			&\ge\frac{3}{4\alpha_k},\quad
			a_{k,t}\ge p_k\beta_k
			\ge\frac{3\widehat L\lambda}{8\nu\alpha_k},\quad
			\frac{\lambda_k L_{\eta_k}\alpha_{k,t}}{\nu}
			\le\frac{\lambda \widehat L\alpha_k^2}{\nu\alpha_k}\le
			\frac{\widehat L\lambda}{16\nu\alpha_k}.
			\label{thm3.5:parameter-slack}
		\end{align}
		Consequently,
		\begin{align}
			r_{k,t}:=a_{k,t}-\frac{\lambda_k L_{\eta_k}\alpha_{k,t}}{\nu}
			&\ge
			\frac{5\widehat L\lambda}{16\nu\alpha_k}>0.
			\label{thm3.5:3}
		\end{align}
		\eqref{thm3.5:parameter-basic}, \eqref{thm3.5:weight-condition}, and \eqref{thm3.5:3} verify \eqref{par3:3}. 
		Substituting \eqref{lem3.4:1} into \eqref{lem3.2:1} with \(u=x^*\) yields
		\begin{align}
			\phi_k(\bar x_k)-\phi_k(x^*)
			&\le
			(1-\gamma_k)\,[\phi_k(\bar x_{k-1})-\phi_k(x^*)]
			+[\lambda_k\beta_k+(\lambda_k-\gamma_k)\mu]V(x_{k-1},x^*)\nonumber\\
			&\quad-\lambda_k(\beta_k+\mu)V(x_k,x^*)+
			\left(\frac L2-\frac{\nu\beta_k}{2\gamma_k}\right)
			\|\bar x_k-\underline x_k\|^2+\zeta_k,
			\label{thm3.5:one-step-raw}
		\end{align}
		where $\zeta_k
		=\Lambda_{k,T_k}\sum_{t=1}^{T_k}
		\frac{\lambda_k\alpha_{k,t}}{\Lambda_{k,t}}
		\left[
		\langle\delta_{k,t},x^*-u_{k,t-1}\rangle
		+\frac{\|\delta_{k,t}\|_*^2}{2\nu r_{k,t}}
		\right]$. The two coefficients in \eqref{thm3.5:one-step-raw} satisfy
		\begin{align}
			\frac L2-\frac{\nu\beta_k}{2\gamma_k}
			&=\frac{L-\widehat L}{2}\le0,\nonumber\\
			(1-\gamma_k)\lambda_k(\beta_k+\mu)
			&-\lambda_k\beta_k-(\lambda_k-\gamma_k)\mu
			=\gamma_k\left(\frac12-\lambda_k\right)\mu\ge0.
			\label{thm3.5:coefficient-two}
		\end{align}
		Define the Lyapunov quantity
		\begin{align}
			\mathcal E_k
			:=\phi_k(\bar x_k)-\phi_k(x^*)+\lambda(\beta+\mu)V(x_k,x^*).
			\label{thm3.5:energy}
		\end{align}
		Adding \(\lambda(\beta+\mu)V(x_k,x^*)\) to \eqref{thm3.5:one-step-raw} and using \eqref{thm3.5:coefficient-two} gives
		\begin{align}
			\mathcal E_k
			&\le
			(1-\gamma)\,[\phi_k(\bar x_{k-1})-\phi_k(x^*)
			+\lambda(\beta+\mu)V(x_{k-1},x^*)]+\zeta_k.
			\label{thm3.5:4}
		\end{align}
		Since \(\eta_k\le\eta_{k-1}\), \eqref{rem:smoothing-cross} gives
		$0\le h_{\eta_k}(x)-h_{\eta_{k-1}}(x)\le B_h(\eta_{k-1}-\eta_k)$ for every $x\in X$. Hence
		\begin{align}
			\phi_k(\bar x_{k-1})-\phi_k(x^*)
			&\le
			\phi_{k-1}(\bar x_{k-1})-\phi_{k-1}(x^*)
			+B_h(\eta_{k-1}-\eta_k).
			\label{thm3.5:5}
		\end{align}
		Combining \eqref{thm3.5:4} and \eqref{thm3.5:5},
		\begin{align}
			\mathcal E_k
			&\le
			(1-\gamma)\mathcal E_{k-1}
			+(1-\gamma)B_h(\eta_{k-1}-\eta_k)+\zeta_k.
			\label{thm3.5:pathwise-recursion}
		\end{align}
		By Assumption~\ref{ass:h2}, we have
		\begin{align}
			\mathbb E\!\left[
			\langle\delta_{k,t},x^*-u_{k,t-1}\rangle
			\mid\mathcal F_{k,t-1}\right]=0,\quad
			\mathbb E[\|\delta_{k,t}\|_*^2\mid\mathcal F_{k,t-1}]\le\sigma^2.
			\label{thm3.5:variance}
		\end{align}
		Define $
		b_{k,t}
		:=\frac{\lambda\alpha_k\Lambda_{k,T_k}}
		{2\nu r_{k,t}\Lambda_{k,t}}$.
		Taking expectations in \eqref{thm3.5:pathwise-recursion} and using \eqref{thm3.5:variance}, we obtain
		\begin{align}
			\mathbb E[\mathcal E_k]
			&\le
			(1-\gamma)\,\mathbb E[\mathcal E_{k-1}]
			+(1-\gamma)B_h(\eta_{k-1}-\eta_k)
			+\sigma^2\sum_{t=1}^{T_k}b_{k,t}.
			\label{thm3.5:6}
		\end{align}
		By \eqref{thm3.5:3}, we get $
		b_{k,t}
		\le
		\frac{8\alpha_k^2}{5\widehat L}
		\frac{\Lambda_{k,T_k}}{\Lambda_{k,t}}$.
		Since \(\Lambda_{k,T_k}/\Lambda_{k,t}
		=(1-\alpha_k)^{T_k-t}\) and
		\((1-\alpha_k)^{T_k}=1/2\), then we have
		\begin{align}
			\sum_{t=1}^{T_k}b_{k,t}
			&\le
			\frac{8\alpha_k^2}{5\widehat L}
			\sum_{j=0}^{T_k-1}(1-\alpha_k)^j=
			\frac{8\alpha_k^2}{5\widehat L}
			\frac{1-(1-\alpha_k)^{T_k}}{\alpha_k}
			=\frac{4\alpha_k}{5\widehat L}\nonumber\\
			&=
			\frac{4}{5\widehat L}
			\left(1-e^{-\log(2)/T_k}\right)
			\le\frac{4\log(2)}{5\widehat L T_k}.
			\label{thm3.5:7}
		\end{align}
		Summing up \eqref{thm3.5:6} from \(k=1\) to \(N\), and then applying \eqref{thm3.5:7}, gives
		\begin{align}
			\mathbb E[\mathcal E_N]
			&\le
			(1-\gamma)^N\mathcal E_0
			+B_h\sum_{k=1}^N
			(1-\gamma)^{N-k+1}(\eta_{k-1}-\eta_k) +
			\frac{4\sigma^2\log(2)}{5\widehat L}
			\sum_{k=1}^N\frac{(1-\gamma)^{N-k}}{T_k}.
			\label{thm3.5:unrolled}
		\end{align}
		For \(\sigma>0\), \(T_k\ge T_k^{\rm s}\) in \eqref{par3.5} implies
		$
		\frac{1}{T_k}
		\le
		\frac{5\widehat L\epsilon(1-\sqrt{1-\gamma})}
		{8\sigma^2\log(2)(1-(1-\gamma)^{N/2})}
		(1-\gamma)^{-(N-k)/2}$.
		Therefore,
		\begin{align}
			\frac{4\sigma^2\log(2)}{5\widehat L}
			\sum_{k=1}^N\frac{(1-\gamma)^{N-k}}{T_k}
			&\le
			\frac{\epsilon(1-\sqrt{1-\gamma})}{2(1-(1-\gamma)^{N/2})}
			\sum_{k=1}^N(1-\gamma)^{(N-k)/2}
			=\frac{\epsilon}{2}.
			\label{thm3.5:noise-sum}
		\end{align}
		When \(\sigma=0\), the left-hand side of \eqref{thm3.5:noise-sum} is zero, so the same bound holds.
		Next, \(\eta_k=\frac{\Delta_0}{B_h}(1-\gamma)^{k/2}\) gives
		\begin{align}
			\sum_{k=1}^N
			(1-\gamma)^{N-k+1}(\eta_{k-1}-\eta_k)
			&=
			\sqrt{(1-\gamma)}\,\eta_N(1-(1-\gamma)^{N/2})
			\le\sqrt{(1-\gamma)}\,\eta_N.
			\label{thm3.5:smoothing-sum}
		\end{align}
		Substituting \eqref{thm3.5:noise-sum} and \eqref{thm3.5:smoothing-sum} into \eqref{thm3.5:unrolled},
		\begin{align}
			\mathbb E[\mathcal E_N]
			&\le
			(1-\gamma)^N\mathcal E_0+\sqrt{(1-\gamma)}\,\eta_NB_h
			+\frac{\epsilon}{2}.
			\label{thm3.5:8}
		\end{align}
		Since
		\(\phi_k(x)\le\Psi(x)\le\phi_k(x)+B_h\eta_k\) by \eqref{smooth:transfer}, then we have
		\begin{align}
			\mathcal E_0
			&=
			\phi_0(\bar x_0)-\phi_0(x^*)+\lambda(\beta+\mu)V(x_0,x^*)\nonumber\\
			&\le
			\Psi(\bar x_0)-\Psi^*+\lambda(\beta+\mu)V(x_0,x^*)+\eta_0B_h
			\le2\Delta_0,
			\label{thm3.5:initial-energy}\\
			\Psi(\bar x_N)-\Psi^*
			&\le
			\phi_N(\bar x_N)-\phi_N(x^*)+\eta_NB_h
			\le\mathcal E_N+\eta_NB_h.
			\label{thm3.5:terminal-transfer}
		\end{align}
		For \(N\ge1\), the definition of \(N\) in \eqref{par3.5} yields
		\begin{align}
			(1-\gamma)^{N/2}
			&\le\frac{\epsilon}{8\Delta_0},\qquad
			\eta_NB_h=\Delta_0(1-\gamma)^{N/2}.
			\label{thm3.5:N-choice}
		\end{align}
		\eqref{thm3.5:N-choice} gives
		\begin{align}
			2\Delta_0(1-\gamma)^N
			&\le\frac{\epsilon}{4}(1-\gamma)^{N/2}\le\frac{\epsilon}{4},\nonumber\\
			(1+\sqrt{1-\gamma})\eta_NB_h
			&=(1+\sqrt{1-\gamma})\Delta_0(1-\gamma)^{N/2}
			\le2\Delta_0(1-\gamma)^{N/2}\le\frac{\epsilon}{4}.
			\label{thm3.5:deterministic-sum}
		\end{align}
		Finally, \eqref{thm3.5:8}--\eqref{thm3.5:deterministic-sum} give
		\begin{align}
			\mathbb E[\Psi(\bar x_N)-\Psi^*]
			&\le
			(1-\gamma)^N\mathcal E_0+(1+\sqrt{1-\gamma})\eta_NB_h
			+\frac{\epsilon}{2}\le\epsilon,
		\end{align}
		which proves \eqref{thm3.5:1}. We now count the oracle calls. Since
		\(-\log(1-\gamma)\ge\gamma\), \eqref{par3.5} gives
		\begin{align}
			N_f=N
			&\le
			1+\frac{2}{\gamma}
			\log\left(\frac{8\Delta_0}{\epsilon}\right)=
			\mathcal O\!\left(
			\left[1+\sqrt{\frac{L}{\nu\mu}}\right]
			\left[1+\log\max\left\{\frac{\Delta_0}{\epsilon},1\right\}\right]\right),
			\label{thm3.5:Nf-bound}
		\end{align}
		where
		\(1/\gamma=2\sqrt{\widehat L/(\nu\mu)}
		=\mathcal O(1+\sqrt{L/(\nu\mu)})\). The definition of \(T_k^{\rm g}\) gives
		\begin{align}
			T_k^{\rm g}
			&\le
			1+4\log(2)\sqrt{\frac{\max\{\widehat L,L_{\eta_k}\}}{\widehat L}}=
			\mathcal O\left(
			1+\sqrt{\frac{C_h}{\widehat L\eta_k}}
			\right).
			\label{thm3.5:9}
		\end{align}
		For \(N\ge1\), moreover,
		\begin{align}
			\sum_{k=1}^N\eta_k^{-1/2}
			&=
			\eta_N^{-1/2}\sum_{k=1}^N(1-\gamma)^{(N-k)/4}
			\le\frac{\eta_N^{-1/2}}{1-(1-\gamma)^{1/4}}
			\le\frac{4}{\gamma\sqrt{\eta_N}},
			\label{thm3.5:eta-sum}
		\end{align}
		where \(1-(1-\gamma)^{1/4}\ge\gamma/4\). The minimal integer choice of \(N\) gives
		\begin{align}
			\frac{\sqrt{1-\gamma}\,\epsilon}{8B_h}
			<\eta_N=\frac{\Delta_0}{B_h}(1-\gamma)^{N/2}
			\le\frac{\epsilon}{8B_h}.
			\label{thm3.5:eta-terminal}
		\end{align}
		Thus \eqref{thm3.5:9}--\eqref{thm3.5:eta-terminal}, together with
		\(\gamma\sqrt{\widehat L}=\sqrt{\nu\mu}/2\), yield
		\begin{align}
			\sum_{k=1}^NT_k^{\rm g}
			&=
			\mathcal O\!\left(
			N_f+\sqrt{\frac{B_hC_h}{\nu\mu\epsilon}}
			\right).
			\label{thm3.5:Tg-sum}
		\end{align}
		For $T_k^{\rm s}$, we have
		\begin{align}
			\sum_{k=1}^NT_k^{\rm s}
			&\le
			N+\frac{8\sigma^2\log(2)}
			{5\widehat L\epsilon}\mathcal S_N
			\sum_{k=1}^N(1-\gamma)^{(N-k)/2}=
			N+\frac{8\sigma^2\log(2)}
			{5\widehat L\epsilon}\mathcal S_N^2,
			\label{thm3.5:Ts-pre}
		\end{align}
		where $\mathcal S_N=
		\sum_{j=1}^N(1-\gamma)^{(N-j)/2}
		\le\frac{1}{1-\sqrt{1-\gamma}}\le\frac{2}{\gamma}$. Combining $\widehat L\gamma^2
		=\frac{\nu\mu}{4}$, we get
		\begin{align}
			\sum_{k=1}^NT_k^{\rm s}
			&=
			\mathcal O\!\left(
			N_f+\frac{\sigma^2}{\nu\mu\epsilon}
			\right).
			\label{thm3.5:Ts-sum}
		\end{align}
		Finally, \(T_k=\max\{T_k^{\rm g},T_k^{\rm s}\}
		\le T_k^{\rm g}+T_k^{\rm s}\). Hence \eqref{thm3.5:Nf-bound},
		\eqref{thm3.5:Tg-sum}, and \eqref{thm3.5:Ts-sum} give
		\begin{align}\label{thm3.5:Tg-0}
			N_H
			=\sum_{k=1}^NT_k
			&=
			\mathcal O\!\left(
			N_f+\sqrt{\frac{B_hC_h}{\nu\mu\epsilon}}
			+\frac{\sigma^2}{\nu\mu\epsilon}
			\right),
		\end{align}
		which proves \eqref{thm3.5:complexities}. If
		\(\sigma=0\), then \eqref{thm3.5:Tg-0} gives
		\begin{align}
			N_{\nabla h_\eta}
			=\sum_{k=1}^NT_k^{\rm g}
			=\mathcal O\!\left(
			N_f+\sqrt{\frac{B_hC_h}{\nu\mu\epsilon}}
			\right),
		\end{align}
		which is \eqref{thm3.5:Kcomplexity}.		
		\Halmos
	\end{proof}
	\begin{remark}
		The bound $\Delta_0$ in \eqref{par3:Delta0} is computable whenever a lower bound $\Psi_{\rm lb}\le\Psi^*$ and a Bregman-radius bound $D_X\ge\sup_{x\in X}V(x_0,x)<\infty$ are available. Indeed, the choice
		\begin{align*}
			\Delta_0:=\Psi(\bar x_0)-\Psi_{\rm lb}+\lambda(\beta+\mu)D_X
		\end{align*}
		satisfies \eqref{par3:Delta0}. 
	\end{remark}
	\begin{remark}
		Under the assumptions and notation of Theorem~\ref{thm3.5}, let $0<\epsilon<8\Delta_0$ and apply Algorithm~\ref{alg2} with the parameter choices in \eqref{par3.51}--\eqref{par3.5}. If $\nu$, $B_h$, $C_h$, and $\Delta_0$ are treated as fixed problem constants, the numbers of gradient evaluations of $\nabla f$ and stochastic-oracle evaluations required to obtain an $\epsilon$-solution of problem \eqref{prob} are bounded, respectively, by
		\begin{align*}
			&	\mathcal{O}\!\left(\sqrt{\frac{L}{\mu}}\,
			\log\max\!\Bigl\{1,\frac{1}{\epsilon}\Bigr\}\right),\quad \mathcal O\!\left(
			\sqrt{\frac{L}{\mu}}\,
			\log\max\!\Bigl\{1,\frac{1}{\epsilon}\Bigr\}+\frac{1}{\sqrt{\mu\epsilon}}+\frac{\sigma^2}{\mu\epsilon}\right).
		\end{align*}
		If \(\sigma=0\), the above bounds are 
		\begin{align*}
			&	\mathcal{O}\!\left(\sqrt{\frac{L}{\mu}}\,
			\log\max\!\Bigl\{1,\frac{1}{\epsilon}\Bigr\}\right),\quad \mathcal O\!\left(
			\sqrt{\frac{L}{\mu}}\,
			\log\max\!\Bigl\{1,\frac{1}{\epsilon}\Bigr\}+\frac{1}{\sqrt{\mu\epsilon}}\right)
		\end{align*}
		for evaluations of $\nabla f$ and $\nabla h_{\eta_k}$, respectively. Combining Theorem~\ref{thm3.5} with the equivalent reformulation in Remark~\ref{remark:2} shows that RF-ASSGS also covers the setting treated by M-AGS in which $f$ is strongly convex.
	\end{remark}

	\begin{remark}\label{remark:smoothing-examples}
		The following examples satisfy Assumption~\ref{ass:smoothing}.
		
		\begin{itemize}
			\item Consider the max-form function studied in \cite{Lan2022}:
			\begin{align}\label{pro:spp}
				h(x)=\max_{y\in Y}\{\langle Kx,y\rangle-J(y)\},
			\end{align}
			where $Y\subseteq\mathbb R^m$ is closed and convex, $K:\mathbb R^n\to\mathbb R^m$ is linear, and $J:Y\to\mathbb R$ is convex. Let $s:Y\to\mathbb R$ be $\mu_s$-strongly convex, and define
			\begin{align*}
				d(y):=s(y)-s(y_s)-\langle\nabla s(y_s),y-y_s\rangle,
				\qquad
				\Omega:=\max_{y\in Y}d(y)<\infty.
			\end{align*}
			Nesterov's smoothing
			\begin{align}\label{remark:max-smoothing}
				h_\eta(x):=\max_{y\in Y}
				\{\langle Kx,y\rangle-J(y)-\eta d(y)\}
			\end{align}
			has a unique maximizer $y_\eta(x)$ and satisfies \eqref{rem:smoothing-family}--\eqref{rem:smoothing-cross} with
			\begin{align*}
				B_h=\Omega,\qquad
				C_h=\frac{\|K\|^2}{\mu_s},\qquad
				\nabla h_\eta(x)=K^\top y_\eta(x).
			\end{align*}
			Therefore, Theorem~\ref{thm3.5} gives
			\begin{align*}
				N_H=\mathcal O\!\left(
				N_f+\frac{\sqrt{\Omega}\,\|K\|}
				{\sqrt{\nu\mu\mu_s\epsilon}}
				+\frac{\sigma^2}{\nu\mu\epsilon}\right).
			\end{align*}
			If $\nabla h_\eta$ is evaluated exactly using one application each of $K$ and $K^\top$, then $\sigma=0$ and
			\begin{align*}
				N_K=\mathcal O\!\left(
				N_f+\frac{\sqrt{\Omega}\,\|K\|}
				{\sqrt{\nu\mu\mu_s\epsilon}}\right).
			\end{align*}
			Compared with the multi-stage AGS method in \cite{Lan2022}, RF-ASSGS for sloving \eqref{remark:max-smoothing} has three distinguishing features: it uses a continuously updated parameter schedule instead of periodic restarts; it accommodates stochastic, rather than only exact, first-order information for $h_\eta$; and its analysis requires only the abstract smoothing conditions \eqref{rem:smoothing-family}--\eqref{rem:smoothing-cross}, rather than the max-form representation. Moreover, Remark~\ref{remark:2} shows that the strong convexity assumption on $\chi$ also covers the setting in which $f$ is strongly convex. Thus, RF-ASSGS applies to both strong-convexity configurations while retaining a restart-free structure.
			
			\item For
			\[
			h(x)=\max_{1\le i\le m_h}\{a_i^{\!\top}x+b_i\},
			\]
			the shifted log-sum-exp smoothing \cite{Nesterov05}
			\[
			h_\eta(x)=\eta\log\!\left(
			\frac{1}{m_h}\sum_{i=1}^{m_h}
			\exp\!\left(\frac{a_i^{\!\top}x+b_i}{\eta}\right)
			\right)
			\]
			satisfies \eqref{rem:smoothing-family}--\eqref{rem:smoothing-cross} with $B_h=\log m_h$ and $C_h=\|A_h\|_2^2$ under the Euclidean norm, where the $i$th row of $A_h$ is $a_i^{\!\top}$.
			
			\item If $h:\mathbb R^n\to\mathbb R$ is convex and $M$-Lipschitz continuous with respect to the Euclidean norm, its Moreau envelope \cite{Bauschke}
			\[
			h_\eta(x)=\min_{z\in\mathbb R^n}
			\left\{h(z)+\frac{1}{2\eta}\|z-x\|_2^2\right\}
			\]
			satisfies \eqref{rem:smoothing-family}--\eqref{rem:smoothing-cross} with $B_h=M^2/2$ and $C_h=1$. Moreover,
$\nabla h_\eta(x)=\frac{x-\operatorname{prox}_{\eta h}(x)}{\eta},$
			so this construction applies whenever $\operatorname{prox}_{\eta h}$ can be computed efficiently.
		\end{itemize}
	\end{remark}

	\section{Numerical Experiments}\label{sec4}
	In this section, we conduct numerical experiments to compare the performance of the proposed restart-free algorithms, RF-SGS and RF-ASSGS, with the restart-based algorithms, the multi-phase stochastic gradient sliding (M-SGS) and the multi-phase accelerated gradient sliding (M-AGS) algorithms. The comparison is carried out on two classes of problems: portfolio optimization and generalized fused-lasso regression. All experiments are implemented in Python 3.9 and executed on a laptop equipped with an Apple M4 Pro processor and 24 GB of RAM.
	
	\subsection{Portfolio Optimization Problem}
	
	We first evaluate RF-SGS on the portfolio optimization problem \cite{Kremer}
	\begin{align}
		\min_{w\in X}\;
		\frac{\tau}{2}w^{\!\top}\Sigma w-q^{\!\top}w+J_{\boldsymbol\lambda}(w),
	\end{align}
	where
	\[
	X=\{w\in\mathbb R^n:\boldsymbol 1^{\!\top}w=1,\ w_i\ge0\},
	\qquad
	J_{\boldsymbol\lambda}(w)
	=\sum_{i=1}^n\lambda_i^{\rm S}|w|_{(i)},
	\]
	$|w|_{(1)}\ge\cdots\ge|w|_{(n)}$, and
	$\lambda_i^{\rm S}=0.005\cdot10^{-(i-1)/(n-1)}$. Here, $q$ is the vector of expected returns and $\tau>0$ is a regularization parameter. We use the modified covariance matrix
	$\Sigma=\Sigma_0+\mu I_n$ with $\mu>0$,
	which is positive definite and promotes investment diversity \cite{Davis}. We set $\tau=1$ and construct $\Sigma_0$ so that its condition number is approximately $8000$. With $\mu=0.1$, the condition number of $\Sigma$ decreases to approximately $5$. The eigenvectors of $\Sigma_0$ are generated from the QR factorization of a Gaussian matrix, its eigenvalues are geometrically distributed to obtain the prescribed condition numbers, and $q_i\sim\mathcal N(0.03,0.02^2)$. The initial point is $w_0=\boldsymbol 1/n$.
	
	We compare RF-SGS with the multi-phase stochastic gradient sliding method (M-SGS) of \cite{Lan_sliding}. We use the Euclidean distance-generating function
	\begin{align*}
		\omega(w)&:=\frac12\|w\|_2^2,
		\qquad
		V(a,b):=\frac12\|b-a\|_2^2.
	\end{align*}
	For RF-SGS, we set
	\begin{align*}
		f(w)&:=\frac{\tau}{2}w^{\!\top}\Sigma_0w-q^{\!\top}w,
		\qquad
		h(w):=J_{\boldsymbol\lambda}(w),
		\qquad
		\chi(w):=0.05\|w\|_2^2.
	\end{align*}
	The parameters of RF-SGS are selected according to its theoretical prescription: $\beta=L(1-c)$, $\gamma=1-c$, $T_k =\left\lceil \frac{1}{c^{k/2}}\cdot \frac{(\beta+\mu)(1-c)}{ c(\beta+\mu) -\beta}\right\rceil$, $p_{k,t} = \frac{\beta+\mu}{\beta} \cdot  \frac{1}{c^{k/2}}$, $\theta_{k,t} = \frac{1- \frac{1}{1+c^{k/2}} }{  1- \frac{1}{(1+c^{k/2})^t} }$ with $c=\frac{\sqrt{\frac{L}{\mu}}}{1+\sqrt{\frac{L}{\mu}}}$. 
	The parameters of M-SGS are chosen as in \cite{Lan_sliding}. For each dimension, we estimate $\Psi^*$ using an independent exact-oracle RF-SGS run terminated after $N_f^{\rm ref}=1000$ evaluations of $\nabla f$.
	
	\begin{figure}[t]
		\centering
		\begin{tabular}{@{}cc@{}}
			\subfigure[Inner iterations; $n=1000$]{
				\label{fig4.1a}
				\includegraphics[width=0.43\textwidth]{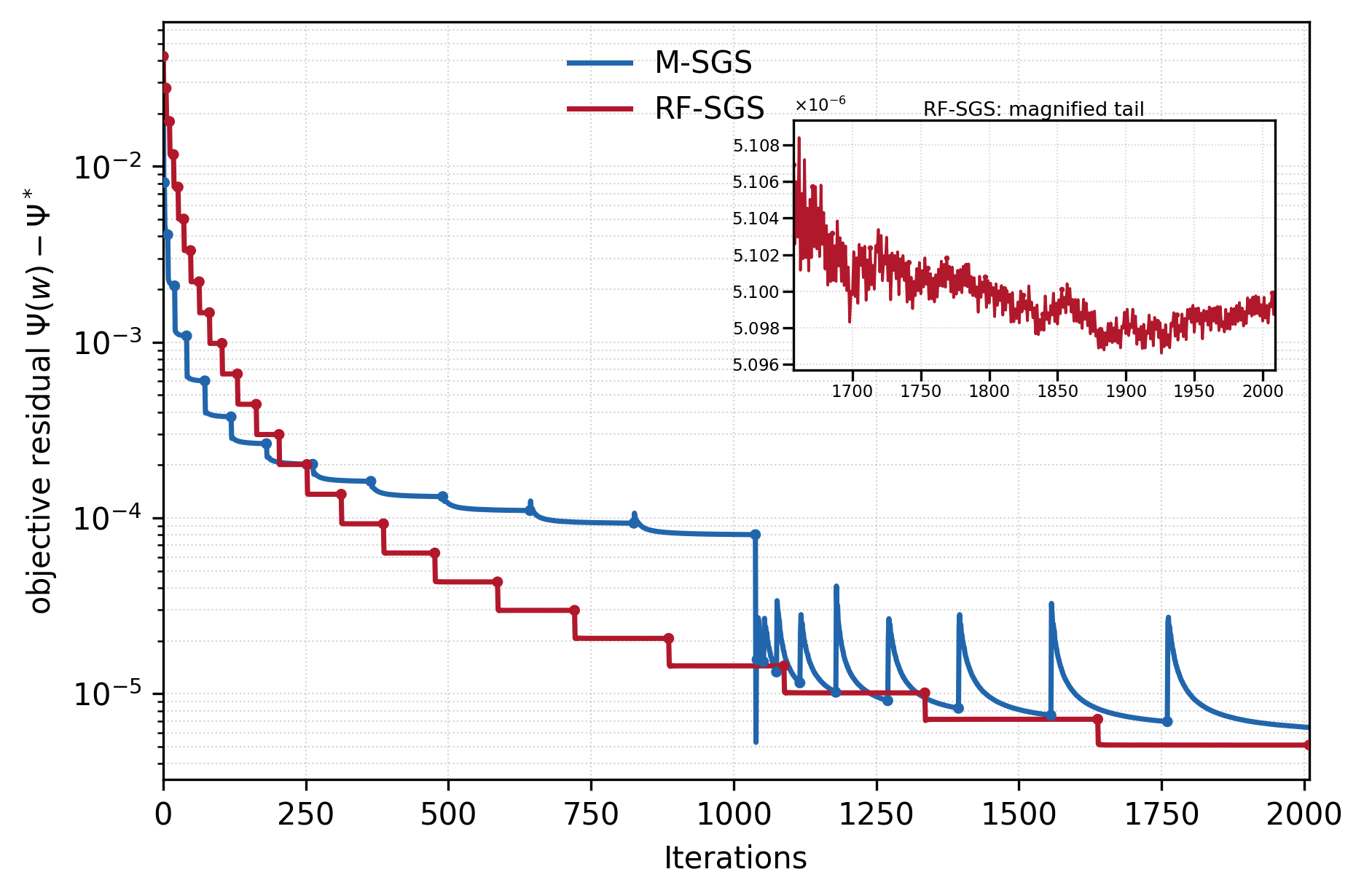}}
			&\subfigure[Inner iterations; $n=2000$]{
				\label{fig4.1b}
				\includegraphics[width=0.43\textwidth]{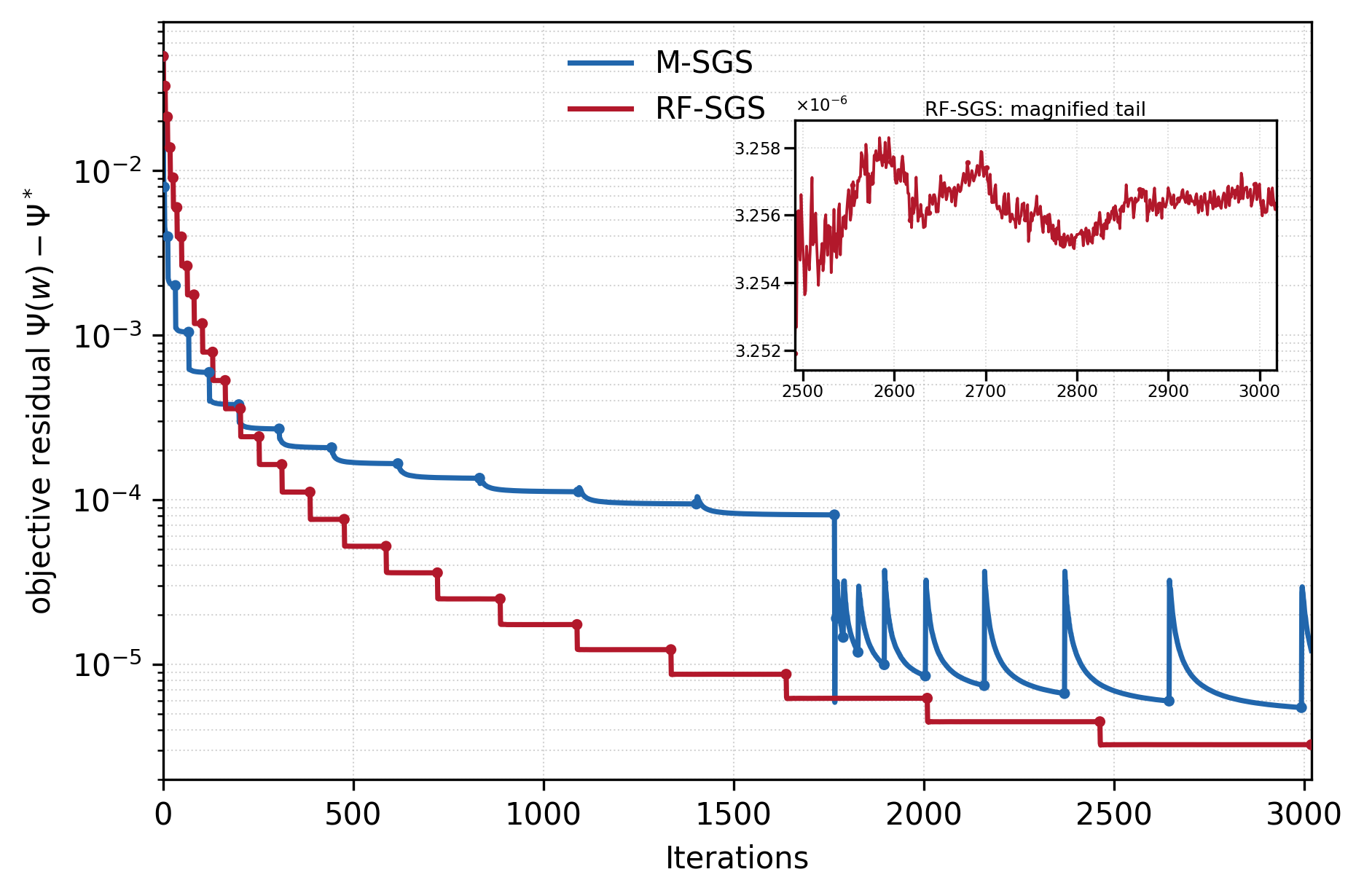}}\\[1ex]
			\subfigure[CPU time; $n=1000$]{
				\label{fig:portfolio-cpu-a}
				\includegraphics[width=0.43\textwidth]{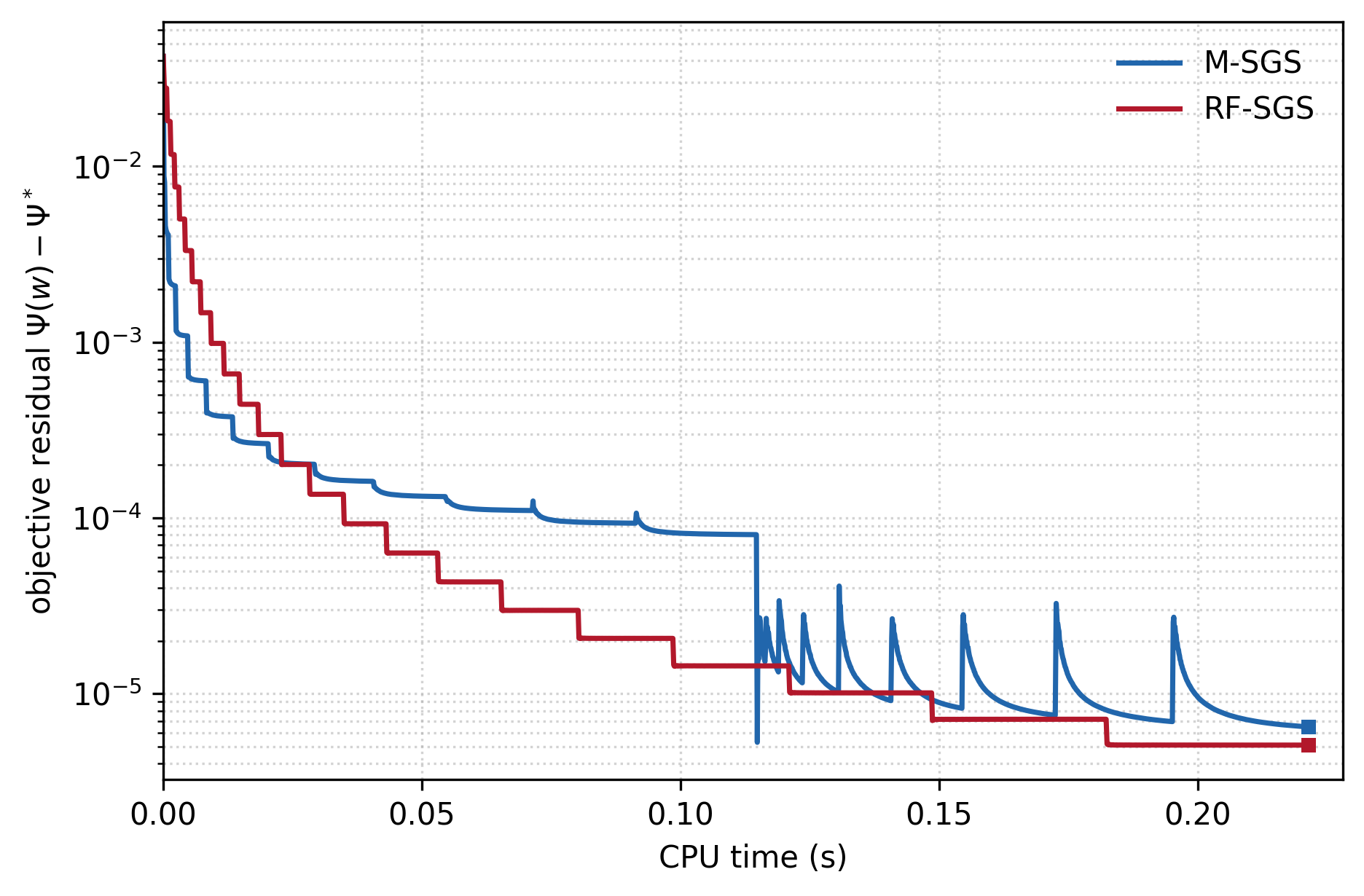}}
			&\subfigure[CPU time; $n=2000$]{
				\label{fig:portfolio-cpu-b}
				\includegraphics[width=0.43\textwidth]{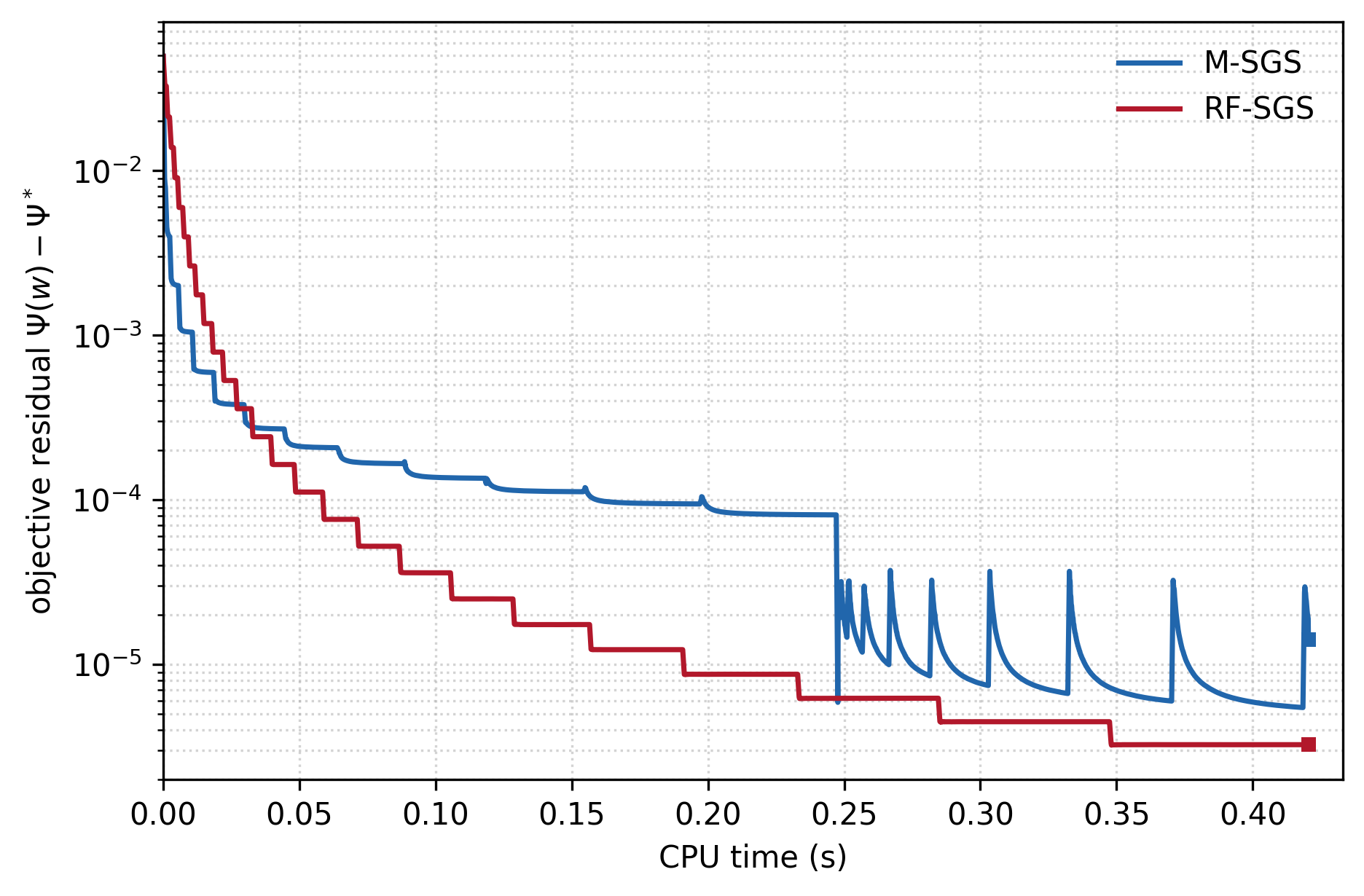}}
		\end{tabular}
		\caption{Comparison of RF-SGS and M-SGS on the portfolio optimization problem.\label{Fig.4.1}}
	\end{figure}
	
	Figure~\ref{Fig.4.1} reports the objective residual $\Psi(w)-\Psi^*$ at the same feasible candidate after each inner update. The upper panels use cumulative inner updates, and the lower panels use the corresponding CPU time. RF-SGS produces a smoother decrease than M-SGS, whose trajectory reflects the periodic restarts. In both dimensions and under both performance measures, RF-SGS reaches a lower residual within the common budget. This demonstrates that the restart-free parameter schedule preserves the practical efficiency of M-SGS while avoiding the oscillations and implementation overhead caused by explicit restarts.
	
	\subsection{Generalized Fused-Lasso Regression}
	
	We next consider the generalized fused-lasso model \cite{Tibshirani2005}
	\begin{align}\label{exp:fused-model}
		\min_{x\in\mathbb R^p}\;
		\Psi(x)
		:=\frac{1}{2m}\|Ax-b\|_2^2
		+\max_{y\in Y}\langle Dx,y\rangle
		+\frac{\mu}{2}\|x\|_2^2,
	\end{align}
	where $A\in\mathbb R^{m\times p}$ is the design matrix, $b\in\mathbb R^m$ is the response vector, and $D$ is the incidence matrix of a path graph. Its rows are $d_e=e_{e+1}-e_e$, $e_i$ is the $i$th standard basis vector, and $Y=\left[-\frac{\lambda_{\rm G}}q,\frac{\lambda_{\rm G}}q\right]^q$, $q=p-1.$
	We use Euclidean distance-generating functions:
	\begin{align*}
		\omega(x)&:=\frac12\|x\|_2^2,\qquad
		V(a,b):=\frac12\|b-a\|_2^2,\qquad
		d(y):=\frac12\|y\|_2^2,\\
		\mu_s&=\nu=1,\qquad
		B_h=\Omega=\frac{\lambda_{\rm G}^2}{2q},\qquad
		C_h=\|D\|^2.
	\end{align*}
	
	We compare RF-ASSGS with M-AGS \cite{Lan2022} under exact and stochastic access to $\nabla h_\eta$. We set $p=1000$, $m=3000$, $\lambda_{\rm G}=0.4$, $\mu=0.08$, $\epsilon=10^{-3}$. 
	The entries of $A$ are sampled independently from $\mathcal N(0,1)$, and
	$b=Ax^{\rm true}+0.08\zeta$, where $x^{\rm true}$ is piecewise constant with $\|x^{\rm true}\|_2=1$ and $\zeta\sim\mathcal N(0,I_m)$ is fixed. Following \cite{Lan2022}, we estimate $\Psi^*$ using an independent deterministic AGS reference run with $\eta_{\rm ref}=10^{-4}$ and $N_f^{\rm ref}=2000$. RF-ASSGS uses the parameters from Theorem~\ref{thm3.5}, while M-AGS uses those in \cite{Lan2022}.
	
	\begin{figure}[!htp]
		\centering
		\subfigure[Inner iterations.]{
			\includegraphics[width=0.48\textwidth]{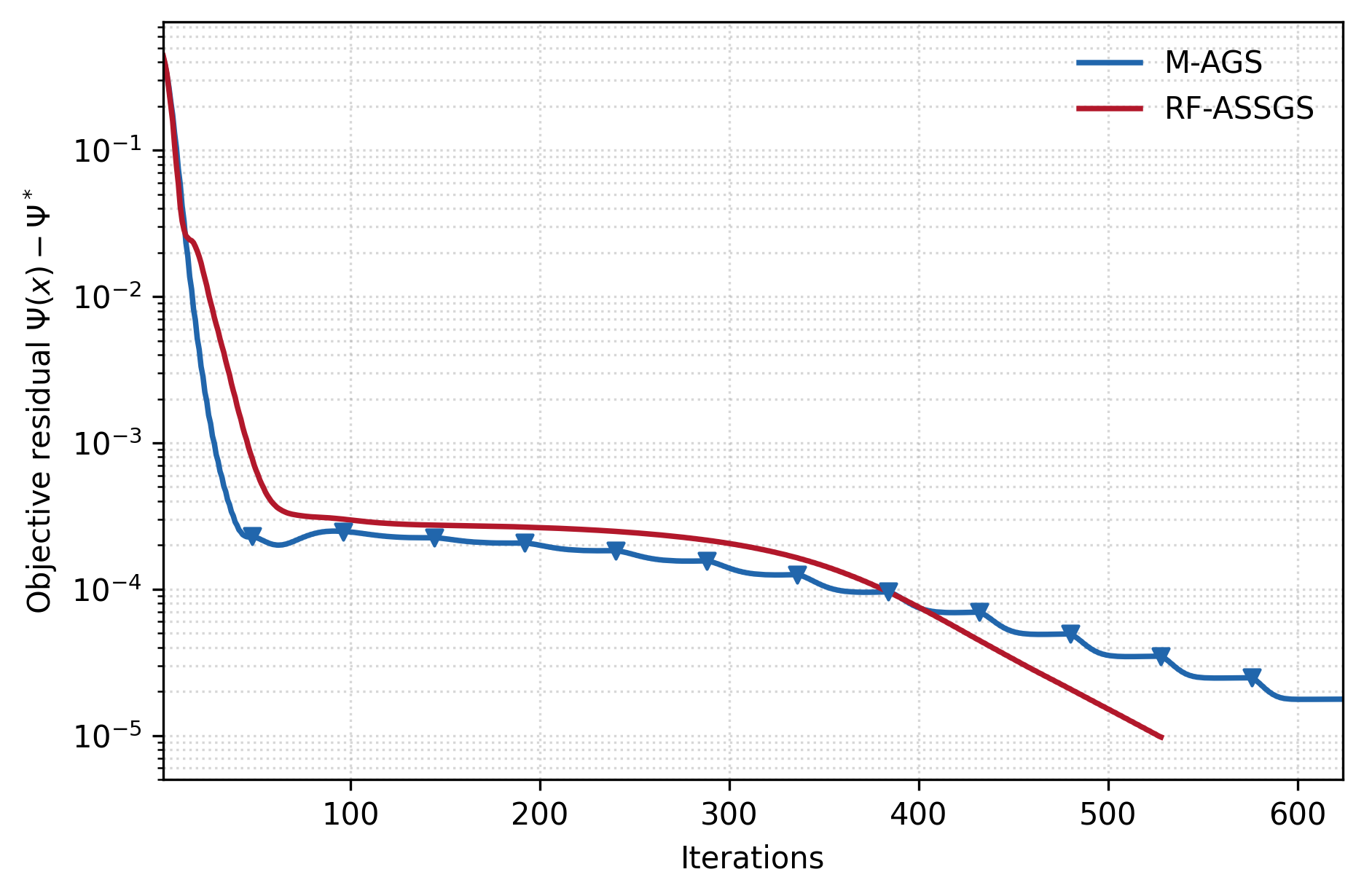}}
		\subfigure[CPU time.]{
			\includegraphics[width=0.48\textwidth]{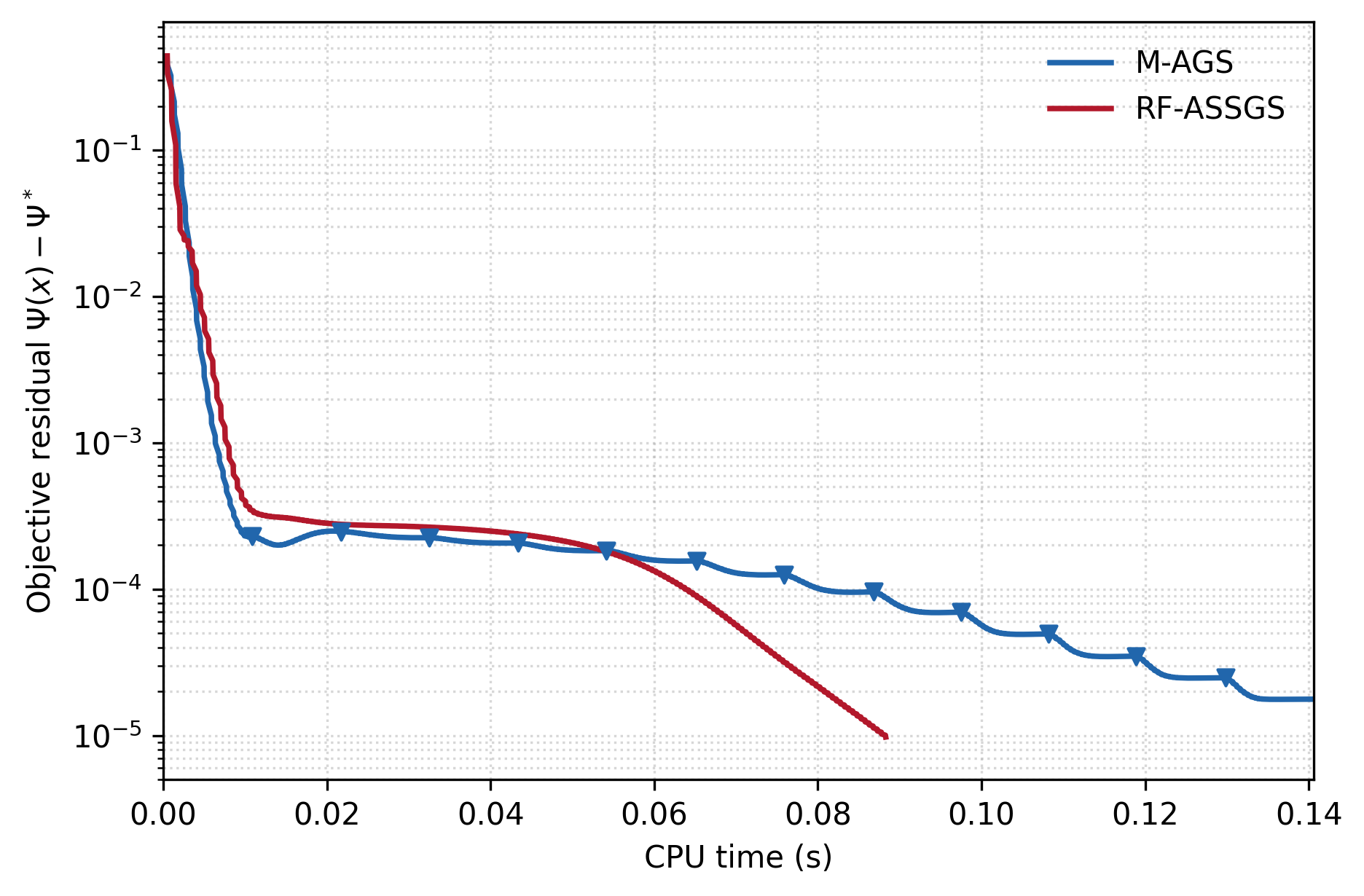}}
		\caption{Deterministic comparison of RF-ASSGS and M-AGS.\label{fig:fused-exact-candidate}}
	\end{figure}
	
	\begin{table}[!htp]
		\centering
		\begin{tabular}{lcccc}
			\hline
			Algorithm & Terminal residual & $N_K$ & $N_f$ & CPU time (s)\\
			\hline
			RF-ASSGS & $9.77\times10^{-6}$ & $528$ & $176$ & $0.0883$\\
			M-AGS    & $1.77\times10^{-5}$ & $624$ & $312$ & $0.1406$\\
			\hline
		\end{tabular}
		\caption{Exact-oracle results for the complete theorem-scheduled runs.}
		\label{tab:fused-exact-results}
	\end{table}
	
	As shown in Table~\ref{tab:fused-exact-results}, RF-ASSGS achieves a lower terminal residual while using fewer evaluations of $\nabla f$ and less CPU time. In this experiment,
	$\nabla f(x)=\frac1mA^{\!\top}(Ax-b),$
	whose evaluation costs $O(mp)$ for dense $A$. In contrast, one application of $D$ or $D^\top$ costs only $O(p)$. Thus, reducing $N_f$ directly improves the computational efficiency, which explains the advantage of RF-ASSGS in both oracle usage and CPU time.
	
	We finally compare the methods with the stochastic oracle $H_\eta$, using one sample per inner update. RF-ASSGS again uses the parameters from Theorem~\ref{thm3.5}. For M-AGS, let  $L_{\rm M}:=L+\mu$ and $M_s:=\max\{L_{\rm M},\|D\|^2/\eta_s\}$. 
	The curve ``M-AGS (original parameters)'' uses the schedule and parameters of \cite{Lan2022}, with the exact gradient replaced by $H_\eta$. The curve ``M-AGS($\widehat M_s=256M_s$)'' uses the same schedule with the conservative estimate $\widehat M_s=256M_s$.
	
	\begin{figure}[!htp]
		\centering
		\subfigure[Inner iterations.]{
			\includegraphics[width=0.48\textwidth]{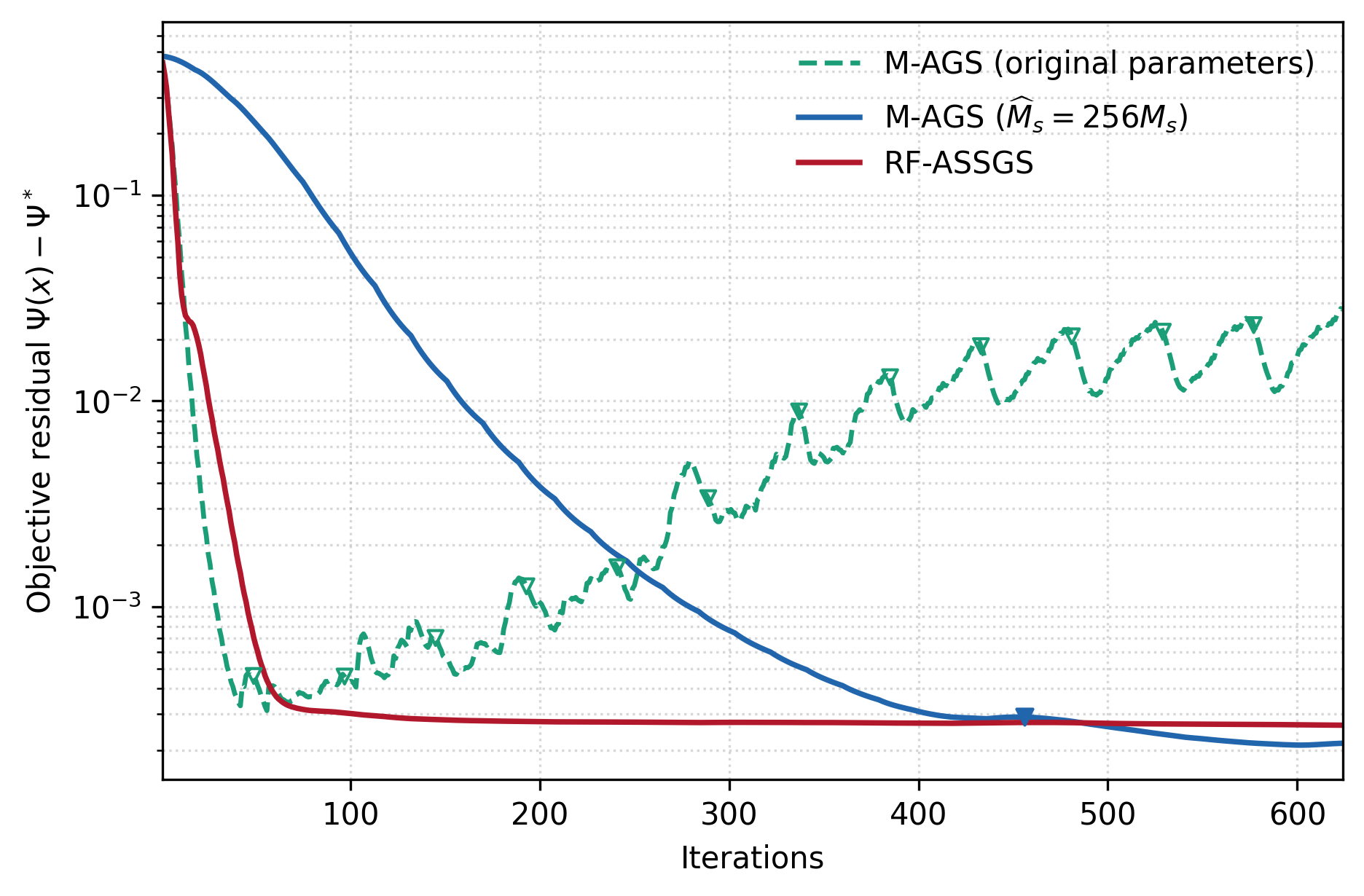}}
		\subfigure[CPU time.]{
			\includegraphics[width=0.48\textwidth]{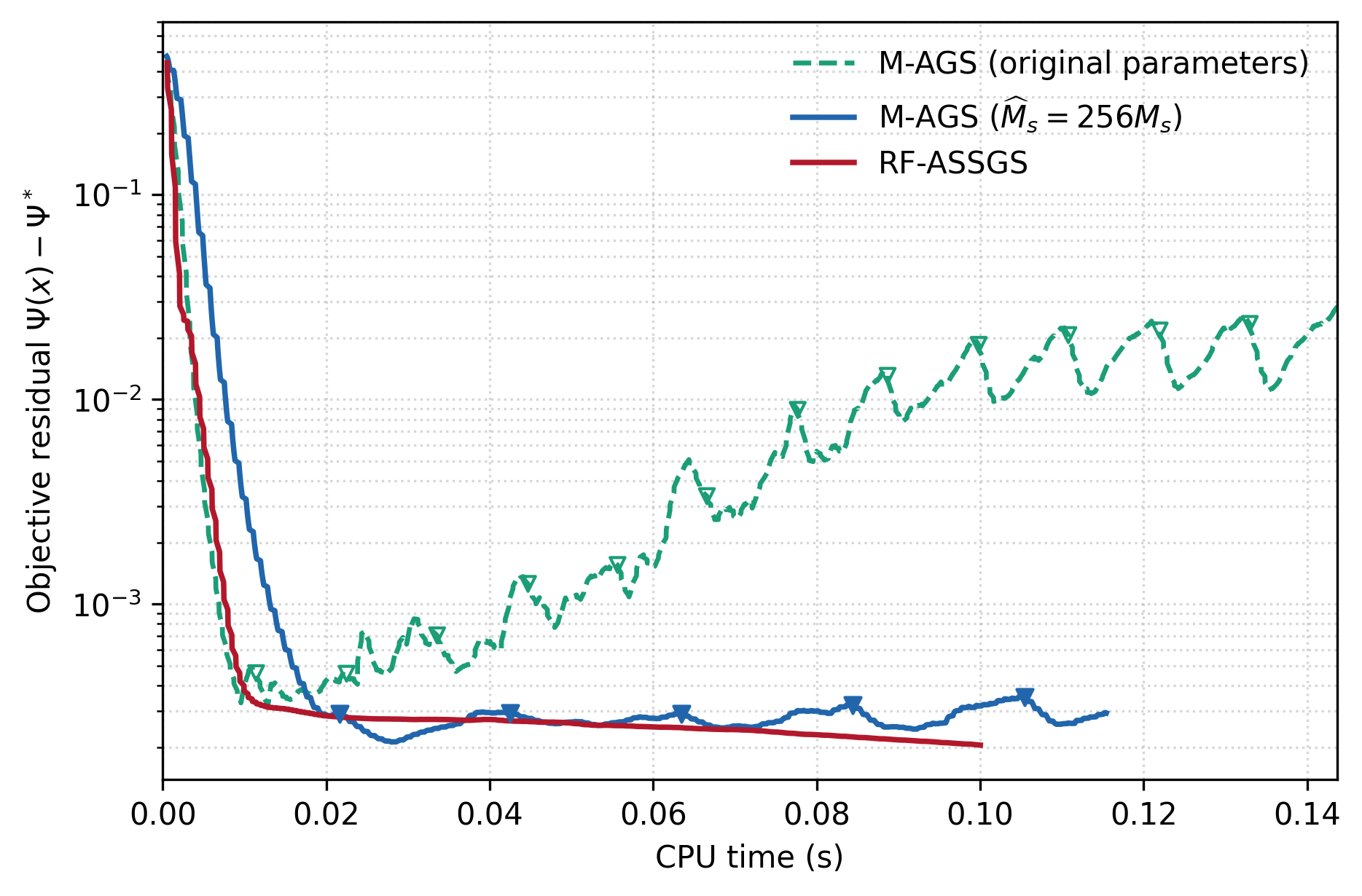}}
		\caption{Stochastic comparison of RF-ASSGS and M-AGS.\label{fig:fused-stochastic-candidate}}
	\end{figure}
	
	Figure~\ref{fig:fused-stochastic-candidate} shows that RF-ASSGS attains a lower residual than both M-AGS variants under the same inner-update budget, while requiring less CPU time. The original M-AGS parameters were designed for exact gradients and do not account for the variance of a single-sample oracle, resulting in an unstable trajectory after direct stochastic substitution. Although the conservative estimate $\widehat M_s$ improves the stability of M-AGS, it increases the inner-loop length and does not eliminate the advantage of RF-ASSGS. These results confirm that the restart-free parameter schedule is effective not only with exact gradients but also with stochastic first-order information.

\section{Conclusion}

We studied restart-free gradient sliding methods for strongly convex composite optimization problems whose objectives consist of a smooth component, a general nonsmooth component, and a relatively simple nonsmooth term. The main contributions address two complementary oracle settings.

First, when stochastic subgradients of the general nonsmooth component are available, we developed the restart-free stochastic gradient sliding (RF-SGS) method. By incorporating strong convexity through a novel parameter-selection strategy, RF-SGS computes an $\epsilon$-solution using $\mathcal{O}(\log(1/\epsilon))$ evaluations of the gradient of the smooth component and $\mathcal{O}(1/\epsilon)$ stochastic subgradient evaluations of the general nonsmooth component, up to the problem-dependent factors specified in the complexity bounds. These orders match those of existing multiphase, restart-based methods, while RF-SGS requires neither explicit restarts nor a multiphase implementation.

Second, when stochastic subgradients are unavailable or expensive to compute, but the general nonsmooth component admits a compatible smooth approximation and an associated first-order oracle, we developed the restart-free accelerated stochastic smoothed-gradient sliding (RF-ASSGS) method. Its complexity is $\mathcal{O}(\log(1/\epsilon))$ gradient evaluations of the smooth component and $\mathcal{O}(1/\sqrt{\epsilon}+\sigma^2/\epsilon)$ stochastic-oracle evaluations, again up to problem-dependent factors. This result covers, in particular, max-form nonsmooth functions and remains applicable whether strong convexity is carried by the relatively simple term or, through the equivalent reformulation in Remark~\ref{remark:2}, by the smooth component.

Numerical experiments on portfolio optimization and generalized fused-lasso regression support these theoretical results. In both the subgradient and smoothing-based settings, the proposed restart-free methods achieve convergence performance comparable to or better than their restart-based counterparts. Overall, the results show that restart-free gradient sliding methods can retain the relevant strong-convexity complexity guarantees in both oracle settings while substantially simplifying the algorithmic structure and implementation.

	%
\end{document}